\documentclass[reqno,10pt,dvipsnames]{amsart}


\usepackage[a4paper,left=25mm,right=25mm,top=30mm,bottom=30mm,marginpar=25mm]{geometry} 
\usepackage{amsmath}
\usepackage{amssymb}
\usepackage{amsthm}
\usepackage{amscd}
\usepackage{dsfont}
\usepackage[ansinew]{inputenc}
\usepackage{color}
\usepackage[final]{graphicx}
\usepackage{tikz}
\usepackage{bbm}
\usepackage{comment}
\usepackage{parskip}
\setlength{\parindent}{10pt}
\usepackage{pgfplots}
\usetikzlibrary{intersections, pgfplots.fillbetween}
\pgfplotsset{every axis/.append style={                    
                    label style={font=\footnotesize},
                    tick label style={font=\tiny}  
                    }}
\usetikzlibrary{patterns} 
\usetikzlibrary{positioning}
\usetikzlibrary{calc}
\usetikzlibrary{intersections}

\usepackage{enumitem}

\usepackage{float}
\usepackage[colorlinks=true, 
 hypertexnames=false,
 pdfstartview=FitV, 
 linkcolor=olive,
 citecolor=olive,
 urlcolor=olive
]{hyperref}
\usepackage{caption}
\usepackage{mathtools}
\mathtoolsset{showonlyrefs}


\renewcommand{\epsilon}{\varepsilon}
\numberwithin{equation}{section}

\newtheoremstyle{thmlemcorr}{10pt}{10pt}{\itshape}{}{\bfseries}{.}{10pt}{{\thmname{#1}\thmnumber{ #2}\thmnote{ (#3)}}}
\newtheoremstyle{thmlemcorr*}{10pt}{10pt}{\itshape}{}{\bfseries}{.}\newline{{\thmname{#1}\thmnumber{ #2}\thmnote{ (#3)}}}
\newtheoremstyle{defi}{10pt}{10pt}{\itshape}{}{\bfseries}{.}{10pt}{{\thmname{#1}\thmnumber{ #2}\thmnote{ (#3)}}}
\newtheoremstyle{remexample}{10pt}{10pt}{}{}{\bfseries}{.}{10pt}{{\thmname{#1}\thmnumber{ #2}\thmnote{ (#3)}}}
\newtheoremstyle{ass}{10pt}{10pt}{}{}{\bfseries}{.}{10pt}{{\thmname{#1}\thmnumber{ A#2}\thmnote{ (#3)}}}

\theoremstyle{thmlemcorr}
\newtheorem{theorem}{Theorem}
\numberwithin{theorem}{section}
\newtheorem{lemma}[theorem]{Lemma}

\newtheorem{proposition}[theorem]{Proposition}
\newtheorem{problem}[theorem]{Problem}

\theoremstyle{thmlemcorr*}
\newtheorem{theorem*}{Theorem}
\newtheorem{lemma*}[theorem]{Lemma}
\newtheorem{corollary*}[theorem]{Corollary}
\newtheorem{proposition*}[theorem]{Proposition}
\newtheorem{problem*}[theorem]{Problem}
\newtheorem{conjecture*}[theorem]{Conjecture}

\theoremstyle{defi}
\newtheorem{definition}[theorem]{Definition}

\theoremstyle{remexample}
\newtheorem{remark}[theorem]{Remark}

\newcommand{\Acal}{\mathcal{A}}

\newcommand{\Lcal}{\mathcal{L}}

\newcommand{\NN}{\mathbb{N}}

\DeclareMathOperator{\Lip}{Lip}

\newcommand{\dd}{\;\mathrm{d}}

\newcommand{\N}{\mathbb{N}}
\newcommand{\R}{\mathbb{R}}
\newcommand{\RR}{\mathbb{R}}

\newcommand{\1}{\mathds{1}}

\newcommand{\loc}{\mathrm{loc}}

\newcommand{\eps}{\epsilon}

\newcommand{\average}{{\mathchoice {\kern1ex\vcenter{\hrule height.4pt
width 6pt
depth0pt} \kern-9.7pt} {\kern1ex\vcenter{\hrule height.4pt width 4.3pt
depth0pt}
\kern-7pt} {} {} }}
\newcommand{\aint}{\average\int}

\newcommand{\argmin}{\operatorname{argmin}}

\DeclareMathOperator{\interior}{int}

\allowdisplaybreaks

\newcommand\restr[2]{{\left.\kern-\nulldelimiterspace #1 \vphantom{\big|} \right|_{#2}}}

\newcommand{\mres}{\mathbin{\vrule height 1.4ex depth 0pt width
0.13ex\vrule height 0.13ex depth 0pt width 1.0ex}}
 
 \def\leq{\leqslant}
\def\geq{\geqslant}

\usepackage[scr=boondoxo]{mathalfa}
\usepackage{multicol}

\usepackage{aurical}
\usepackage[T1]{fontenc}
\usepackage{lmodern}
\usepackage[bbgreekl]{mathbbol}
\usepackage{amsfonts}

\DeclareSymbolFontAlphabet{\mathbb}{AMSb}
\DeclareSymbolFontAlphabet{\mathbbl}{bbold}


\title[Monotonicity of the jump set and jump amplitudes in one-dimensional TV denoising]{Monotonicity of the jump set and jump amplitudes in one-dimensional TV denoising}

\author[Riccardo Cristoferi]{Riccardo Cristoferi}
\address{Radboud University, IMAPP - Department of Mathematics, PO Box 9010, 6500 GL Nijmegen, The Netherlands}
\email{cristoferi@science.ru.nl}

\author[Rita Ferreira]{Rita Ferreira}
\address{King Abdullah University of Science and Technology (KAUST), CEMSE Division, Thuwal 23955-6900, Saudi Arabia}
\email{rita.ferreira@kaust.edu.sa}

\author[Irene Fonseca]{Irene Fonseca}
\address{Carnegie Mellon University, 5000 Forbes Avenue, Pittsburgh, PA 15213}
\email{fonseca@andrew.cmu.edu}

\author[Jos\'e A. Iglesias]{Jos\'e A. Iglesias}
\address{Department of Applied Mathematics, University of Twente, P.O. Box 217, 7500 AE Enschede, The Netherlands}
\email{jose.iglesias@utwente.nl}

\subjclass{26A45, 26A46, 94A12, 68U10.}
\keywords{total variation, denoising, jump set, jump amplitude, regularization parameter, taut string method}

\usepackage[normalem]{ulem}

\begin{document}


\begin{abstract}  
 \vspace{-12pt}   
We revisit the classical problem of denoising a one-dimensional scalar-valued function by minimizing the sum of an $L^2$ fidelity term and the total variation, scaled by a regularization parameter.
This study focuses on proving  that the jump set of solutions, corresponding to discontinuities or edges, as well as the amplitude of the jumps are  nonincreasing as the regularization parameter increases. Compared with previous works, our results apply to a strictly larger class of input functions, extending beyond the traditional setting of functions of bounded variation to any input in $L^\infty$ with left and right approximate limits everywhere. The proof leverages competitor constructions and convexity properties of the taut string problem, a well-known equivalent formulation of the TV model.
This monotonicity property reflects that the extent to which geometric and topological features of the original signal are preserved is consistent with the amount of smoothing desired when formulating the denoising method.
\vspace{-12pt}
\end{abstract}
\maketitle

\section{Introduction}\label{sect:intro}
The Total Variation (TV) model is a cornerstone of signal and image denoising due to its ability to reduce noise while preserving sharp edges. By minimizing a balance between the total variation and a fidelity term, typically measured in terms of an $L^2$ norm, the TV model effectively smooths flat regions without blurring important images' features, such as object boundaries, which makes it particularly suitable for applications where maintaining sharp geometric and topological characteristics is critical.

Since proposed by Rudin, Osher, and Fatemi \cite{RudOshFat92}, the TV model, also known as the ROF model, has  inspired extensive research, including theoretical studies on the regularization properties~\cite{AcaVog94,ChaDuvPeyPoo17,IglMerSch18}, efficient numerical algorithms~\cite{chambolle2004algorithm, ChaPoc11}, and extensions to anisotropic~\cite{GraLen10, LasMolMuc17} and higher-order formulations~\cite{bredies2010total}. These developments underscore the versatility and impact of the TV model in both theory and practice.

A fundamental question associated with the TV and other denoising models is the study of  the structure of their solutions' \textit{jump set}, i.e., the points where discontinuities occur corresponding to edges or abrupt changes in signals. This study is key in the theoretical  understanding of the  regularization's role in signal processing and image analysis, aligning closely with practical applications in signal processing, where preserving or controlling edge structures is paramount. 

Considerable attention has been given to the one-dimensional case, where the so-called taut string formulation offers a powerful lens to study solutions to the TV model. This is based on the  equivalence between the TV and the taut string problems (see Problems~\ref{prob:rof} and~\ref{prob:taut} below). The latter consists in the minimization of the length of the graph of a function with fixed boundary conditions and bilateral inequality constraints given by shifted versions of an input function. This  equivalence is by now well known, having been recognized in the discrete setting since \cite{DavKov01}, and  used in the discretization of signal processing problems \cite{HinEtAl03,Con13} as well as in the continuum setting \cite{Gra07}, with generalizations to more general constraints in \cite{GraObe08} and weighted total variation in \cite{BreIglMer22}. 

In this work, we further  explore the equivalence between the two aforementioned  problems to prove monotonicity properties of the jump set of the solutions to the TV model in the one-dimensional setting, which ensure that the locations of discontinuities (edges) in the denoised image align with the original image's structure. This key property helps maintaining the geometric and topological integrity of the image features, leading to more accurate and visually pleasing denoising results.

Our  purpose  is twofold. First, we address the question of how increasing the regularization parameter, \(\alpha\) (see \eqref{prob:rof} below), simplifies the solution by progressively eliminating jumps  and reducing their amplitude, thereby sharpening the correspondence between input signal features and regularized solutions. Second, we seek to generalize results beyond bounded variation spaces, as explored in \cite{Cri21}, by considering inputs that satisfy weaker conditions, such as the existence of left and right approximate limits everywhere. This enlargement of the admissible class of inputs is natural in imaging, as input data often fails to belong to $BV$ due to the noise or sampling artifacts that the denoising procedure helps remove. In fact, it has been experimentally and theoretically shown in \cite{GoMo01} that natural images not corrupted by noise, due to their intricate textures and fine-scale details, are not expected to be of bounded variation. Specifically, the total variation of natural images tends to infinity as resolution increases, as the distribution of small features follows a power law. This explains why standard TV-based denoising methods, which seek a solution in the space of bounded variation, often smooth out textures and small details. Our work demonstrates that even for these non-$BV$ inputs, for which the total variation regularization is highly active, the solutions of the denoising problem maintain a key structural property: the jump set and jump amplitudes remain monotonically ordered with respect to the regularization parameter. We refer to Remark~\ref{rem:morethanBV} for further discussion on the class of inputs that we consider.

The monotonicity of the jump set with respect to \(\alpha\) ensures the hierarchical structure of solutions, wherein solutions for smaller \(\alpha\) contain all the jump points present in solutions for larger \(\alpha\).
This relationship provides a mathematical foundation for interpreting the sparsification effects of regularization.

Precisely, we  address here the study of monotonicity properties  of the jump set  and of the amplitude of the jump of the solutions to the following one-dimensional signal denoising given by a ROF-type minimization: 

\begin{problem}[ROF]\label{prob:rof} Let \(f\in L^2(0,1)\) and \(\alpha\in (0,+\infty)\), and consider the functional \(\mathcal{I}_\alpha\colon BV(0,1)\to[0,+\infty) \) defined, for \(u\in BV(0,1) \), by
\begin{equation}
\label{eq:Irof}
\begin{aligned}
\mathcal{I}_\alpha[u]:=\frac12\Vert u - f\Vert^2_{L^2(0,1)}
+\alpha TV(u,(0,1)).
\end{aligned}
\end{equation}
Find \(u_\alpha\in BV(0,1) \) such that
\begin{equation}
\label{eq:minROF}
\begin{aligned}
\mathcal{I}_\alpha[u_\alpha] = \min_{u\in BV(0,1) } \mathcal{I}_\alpha [u].
\end{aligned}
\end{equation}
\end{problem}

By using basic properties of $BV(0,1)$ and an application of the direct method of the calculus of variations, this problem can easily be seen to have minimizers. Moreover, in \eqref{eq:Irof} the first term is strictly convex while the second is also convex, so the minimizer $u_\alpha$ is unique. Functions in $BV(0,1)$ need not be continuous, and in case the input function $f$ is discontinuous, the minimizers are typically discontinuous as well. Our main result is the monotonicity with respect to $\alpha$ of the jump set of the $u_\alpha$ in the one-dimensional setting.

\begin{theorem}\label{thm:main}
Let \(f\in L^\infty(0,1) \) be such that its left and right approximate limits
\begin{equation}
\label{eq:F'lim}
\begin{aligned}
f^-(x):=\lim_{y\to x^-} \aint_y^x f(t) \dd t\quad\hbox{ and }\quad
f^+(x):=\lim_{y\to x^+} \aint_x^y f(t) \dd t
\end{aligned}
\end{equation}
exist at every point $x\in(0,1)$.
Let \(\alpha_1\), \(\alpha_2\in (0,+\infty)\), and denote by \(u_{\alpha_1}\), \(u_{\alpha_2} \in BV(0,1)\)  the corresponding solutions to Problem~\ref{prob:rof}.
If $\alpha_1 \leq \alpha_2$, then \[J_{u_{\alpha_2}} \subseteq J_{u_{\alpha_1}} \subseteq J_f.\]
Moreover, we have for each $x\in J_{u_{\alpha_2}}$ that
\[
|u_{\alpha_1}^l(x) - u_{\alpha_1}^r(x)| \geq  |u_{\alpha_2}^l(x) - u_{\alpha_2}^r(x)|,
\]
where the left and right limits are defined in \eqref{eq:r_l_limit}.
\end{theorem}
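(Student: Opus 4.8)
The plan is to pass to the taut string formulation of Problem~\ref{prob:taut}. Set $F(y):=\int_0^y f(t)\dd t$; then $u_\alpha=U_\alpha'$, where $U_\alpha$ is the unique minimizer of the graph length $\int_0^1\sqrt{1+(V')^2}\dd y$ among $V$ with $V(0)=F(0)$, $V(1)=F(1)$ and $F-\alpha\le V\le F+\alpha$ on $(0,1)$. By the first part of the theorem we may assume $x\in J_{u_{\alpha_2}}\subseteq J_{u_{\alpha_1}}\subseteq J_f$; replacing $f$ by $-f$ if needed (which replaces each $u_{\alpha_i}$ by $-u_{\alpha_i}$ and interchanges the one-sided limits), we may also assume $f^-(x)<f^+(x)$.

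Next I record the local structure of the two solutions at $x$. For a jump point of $u_\alpha$ the taut string $U_\alpha$ must be in contact with one of the obstacles there, and since $f$ jumps upward at $x$ --- which makes $F+\alpha$ convex-cornered at $x$ and forces the positive orientation of the jump of $u_\alpha$ (this is part of the analysis behind the inclusions, and can also be read off from the dual variable $z_\alpha:=(U_\alpha-F)/\alpha$, which satisfies $\alpha z_\alpha'=u_\alpha-f$ and must equal $1$ at $x$) --- we get $U_{\alpha_i}(x)=F(x)+\alpha_i$ with a convex corner, so $u_{\alpha_i}^l(x)<u_{\alpha_i}^r(x)$, for $i=1,2$. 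Moreover $z_{\alpha_i}$ attains its maximal value $1$ at $x$, hence $z_{\alpha_i}'(x^-)\ge 0\ge z_{\alpha_i}'(x^+)$, which gives the pinching
\[
f^-(x)\ \le\ u_{\alpha_i}^l(x)\ <\ u_{\alpha_i}^r(x)\ \le\ f^+(x),\qquad i=1,2.
\]
In particular $|u_{\alpha_i}^l(x)-u_{\alpha_i}^r(x)|=u_{\alpha_i}^r(x)-u_{\alpha_i}^l(x)$, so it is enough to prove the one-sided inequalities $u_{\alpha_1}^l(x)\le u_{\alpha_2}^l(x)$ and $u_{\alpha_1}^r(x)\ge u_{\alpha_2}^r(x)$ and add them.

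For these I would use the description of the taut string by convex envelopes on its runs. If $[a,b]$ is the maximal interval containing $x$ on which $U_{\alpha_2}$ is convex, then on $[a,b]$ the string $U_{\alpha_2}$ is the lower convex envelope of the data consisting of its boundary values at $a$ and $b$ and the upper obstacle $F+\alpha_2$ on $(a,b)$, the active data points being contacts with $F\pm\alpha_2$ or an endpoint of $(0,1)$ (where the value is $F$). Hence $u_{\alpha_2}^l(x)$ is the largest slope of a chord joining $(x,F(x)+\alpha_2)$ to an active data point on its left --- which equals $\aint_q^x f$, $\aint_q^x f+\tfrac{2\alpha_2}{x-q}$, or $\aint_0^x f+\tfrac{\alpha_2}{x}$ depending on whether the effective anchor $q$ is a contact with the upper obstacle, the lower obstacle, or the left endpoint --- and symmetrically $u_{\alpha_2}^r(x)$ is the smallest slope of such a chord to the right, of the form $\aint_x^q f$, $\aint_x^q f-\tfrac{2\alpha_2}{q-x}$, or $\aint_x^1 f-\tfrac{\alpha_2}{1-x}$; the same formulas hold with $\alpha_1$ in place of $\alpha_2$. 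Now the vertical translate $U_{\alpha_1}+(\alpha_2-\alpha_1)$ stays inside the $\alpha_2$-tube and coincides with $U_{\alpha_2}$ at every common contact with the upper obstacle (in particular at $x$), and $U_{\alpha_2}-(\alpha_2-\alpha_1)$ plays the mirror role in the $\alpha_1$-tube; using these as competitors in the localized taut string problems on a subinterval of $[a,b]$ bounded by shared obstacle contacts, together with the chord formulas --- in which every $\alpha$-dependent left-anchor term is non-decreasing and every $\alpha$-dependent right-anchor term is non-increasing in $\alpha$ --- forces $u_{\alpha_1}^l(x)\le u_{\alpha_2}^l(x)$ and $u_{\alpha_2}^r(x)\le u_{\alpha_1}^r(x)$, and adding these yields $|u_{\alpha_1}^l(x)-u_{\alpha_1}^r(x)|\ge|u_{\alpha_2}^l(x)-u_{\alpha_2}^r(x)|$.

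The step I expect to be the main obstacle is exactly this localization: the convex runs of $U_{\alpha_1}$ and $U_{\alpha_2}$ through $x$ need not coincide, their effective anchors on the two sides of $x$ may be of different types and shift with $\alpha$, and one must justify carefully that restricting each taut string problem to a suitable common interval bounded by shared obstacle contacts yields again a taut string problem, so that the global length-minimality of the $U_{\alpha_i}$ can be turned into a statement about the single corner at $x$.
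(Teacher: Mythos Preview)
Your proposal only tackles the amplitude inequality, taking the inclusions $J_{u_{\alpha_2}}\subseteq J_{u_{\alpha_1}}\subseteq J_f$ as given; in the paper these inclusions are the bulk of the work and are not available beforehand. For the amplitude part, your structural claims (upper-obstacle contact at an upward jump of $u_\alpha$, the pinching $f^-\le u_\alpha^l<u_\alpha^r\le f^+$) are correct and coincide with the paper's optimality conditions, and the competitor $U_{\alpha_1}+(\alpha_2-\alpha_1)$ together with a convex/concave-envelope description on runs is precisely what the paper also uses.

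The obstacle you identify is the genuine gap, and the chord argument as written does not close it: the monotonicity in $\alpha$ of your slope formulas holds only for a \emph{fixed} anchor $q$, whereas the effective anchors themselves move with $\alpha$, and the shifted string $U_{\alpha_1}+(\alpha_2-\alpha_1)$ violates the boundary conditions at $0$ and $1$, so no global competitor argument is available. The paper's resolution is to insert an intermediate statement---monotonicity of the full contact sets, $C_{\alpha_2}^+\cup C_{\alpha_2}^-\subseteq C_{\alpha_1}^+\cup C_{\alpha_1}^-$---proved by a local perturbation argument: using $L^\infty$-continuity of $\alpha\mapsto U_\alpha$ to keep track of the finitely many points where $U_\alpha$ switches obstacles, one shows the inclusion for $\alpha$ in a small left-neighbourhood of each $\bar\alpha$ and then bootstraps over the whole range. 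With contact-set monotonicity in hand, both the jump-set inclusion (via $J_{u_\alpha}\subset J_f\cap(C_\alpha^+\cup C_\alpha^-)$ plus a short competitor argument at boundary contact points) and the amplitude inequality follow in a few lines by exactly the envelope comparison you sketch. What is missing from your plan is this control over how the contact sets---and hence your anchors---nest as $\alpha$ increases.
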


It was pointed out in \cite{JalCha14} that, under the assumption $f \in L^\infty(0,1) \cap BV(0,1)$, the inclusion $J_{u_{\alpha_2}} \subseteq J_{u_{\alpha_1}}$ follows by combining the monotonicity of the jump set with respect to time for the TV flow of \cite[Thm.~2]{CasJalNov13} with the equivalence of varying the ROF parameter and advancing time in TV flow in the one-dimensional case proved in \cite[Prop.~4.2]{BriChaNovOrl11}. Under such equivalence, one can also obtain the monotonicity of the jump amplitudes using the `local estimate' $|Du| < |Df|$ of \cite{GiaLas19} or the estimates with respect to the jump amplitudes of $f$ contained in \cite[Thm.~3.1]{ChaLas24}. In contrast to that approach, the proof we present in this work applies to a larger class of input functions $f$ (see Remark \ref{rem:morethanBV} below) and uses only variational arguments mainly based on the construction of competitors. As such, it is plausible that an approach such as ours may pave the way to a proof in more general settings, like the analog of Problem \ref{prob:rof} for functions taking values in $\R^m$ with $m>1$.

\begin{remark}\label{rmk:heuristics}
Monotonicity of the jump set of minimizers and their inclusion in the jump set of the input function is consistent with the limits $\alpha \to 0$ and $\alpha \to +\infty$. In the first case, assuming $f \in BV \cap L^2$ and using $u=f$ in \eqref{eq:minROF} one readily obtains $u_\alpha \to f$ strongly in $L^2$. For large $\alpha$, one may prove by using convexity of \eqref{eq:Irof} and a Poincar\'e inequality (see \cite[Prop.~2.5.7]{Jal12}) that there is some $\alpha_f$ depending on $f$ such that for $\alpha \geq \alpha_f$ the minimizers $u_\alpha$ are constant and equal to the average of $f$ on $(0,1)$. Both of these observations hold true also for the higher-dimensional case and beyond scalar-valued functions.
\end{remark}

\begin{remark}\label{rem:morethanBV}
As we discuss next, the space of functions satisfying \eqref{eq:F'lim}  strictly contains the space of functions of bounded variation. This distinction is both of theoretical and practical interest, as it allows us to extend the robustness of the TV model to a broader class of inputs that are not typically considered in standard $BV$ theory, yet often arise from data acquisition processes. Moreover, as  proved in \cite{PaAr24, Ark18},  certain Besov spaces strictly contain $BV \cap L^\infty$ and consist of functions that are approximately continuous except at countably many points. Consequently, these works  show that certain Besov spaces, which are widely used in harmonic analysis and approximation theory, provide a natural setting for functions that are not in $BV$ but still possess properties like approximate continuity except at a countable set of points. This connection provides a more abstract, functional-analytic justification for our chosen class of inputs and situates our work within a broader framework beyond specific counterexamples such as the ones we exhibit next.

First of all, we note that, if $f\in BV$, then the above assumptions are satisfied. Indeed, fix $x_0\in(0,1)$ and consider the function $F:(0,1)\to\R$ defined as
\[
F(x):= \int_{x_0}^x f(t) \dd t.
\]
Then, since $f\in L^\infty(0,1)$, we get that $F$ is Lipschitz, and therefore of bounded variation. Thus, the validity of \eqref{eq:F'lim} follows from \cite[Thm~2.17]{Leo17}.

We now provide two explicit examples of a function that satisfies assumption \eqref{eq:F'lim}, but is not of bounded variation.
The first example is the function $f:(0,1)\to\R$ given by $f(x):= \sin(1/x)$. The function $f$ is not of bounded variation, but it is analytic, and thus satisfies \eqref{eq:F'lim}.

For the reader who is dissatisfied by the previous example since there is degeneracy at the boundary of the domain of definition, we present another more involved example.
Let $f:[-1,1]\to\R$ be defined as
\[
f(x):=
\left\{
\begin{array}{ll}
1 & \text{ if } x\in [n^{-1}, n^{-1}+2^{-n}], \text{ for some } n\geq 1,\\
0 & \text{ else}.
\end{array}
\right.
\]
This function does not have bounded pointwise variation (and thus, it is not of bounded variation). Indeed, it has countably many jumps of size one.
Nevertheless, it satisfies assumption \eqref{eq:F'lim}.
The only point where we need to check that the required condition is satisfied is $x_0=0$. Note that this point is in the interior of the domain of definition of the function $f$.
We claim that
\[
\lim_{x\to x_0^-} \aint_x^{x_0} f(x) \dd t = \lim_{x\to x_0^+} \aint_{x_0}^x f(x) \dd t = 0.
\]
We only verify that the first limit holds, since the latter follows by using a similar argument.
Let $x>0$, and let $n\in\N$ be such that $x\in [n^{-1}, (n-1)^{-1})$. Then,
\[
\frac{1}{x}\int_0^x f(t) \dd t \leq n \int_0^{\frac{1}{n-1}} f(t) \dd t
= n \sum_{k=n}^\infty \frac{1}{2^k}
= \frac{n}{2^{n-1}} \to0 \quad\text{as }n\to\infty.
\]
Using this example, it is possible to construct a function with countably many points that behave like the origin for the above function.
\end{remark}


\subsection{Strategy of the proof}

The rest of the paper is dedicated to the proof of Theorem \ref{thm:main}, building on intermediate results that may be of independent interest. Our arguments are hinged on the equivalence between the ROF Problem~\ref{prob:rof} and the taut string Problem~\ref{prob:taut} below, stating that the input and solution of Problem~\ref{prob:rof} are the derivatives of respectively the input and solution of Problem~\ref{prob:taut}. As already mentioned above, the taut string problem consists in the minimization of the length of the graph of a function with fixed boundary conditions and bilateral inequality constraints given by shifted versions of a primitive of the input function $f$.

Section~\ref{sect:TS} is dedicated to basic results about the taut string problem. We start by recalling the equivalence in Theorem~\ref{thm:gra07}, along with the likewise known `universal minimality' property of the taut string problem in Theorem~\ref{thm:uni-min}, stating that the length integrand can be replaced by any other strictly convex function without changing the unique minimizer. Then, we define the contact sets consisting of points of the domain in which either of the constraints of the taut string problem is active, with Lemma~\Ref{lem:geomtaut} being dedicated to first consequences of this definition. In this lemma, we recall the (also known) convexity and concavity properties of the solutions in the complement of the lower and upper contact sets respectively, and prove a straightforward bound on the amount of times the solution can switch between the two contact sets. Finally,  we prove in Proposition~\ref{prop:linfty_stability} that solutions of the taut string problem are stable in $L^\infty$ norm with respect to the regularization parameter, combining the universal minimality property with a $\Gamma$-convergence argument.

Section~\ref{sect:proof_main} tackles the proof of Theorem~\ref{thm:main}, relying on properties from the previous section. The first main step is Theorem~\ref{thm:optcond} in which precise optimality conditions at the contact sets are proved by a competitor construction which exploits the quantitative strict convexity of the graph length integrand. These state, roughly speaking, that the taut string minimization decreases the absolute slope in the minimizers with respect to the inputs. Taking advantage of these optimality conditions, the convexity/concavity properties of Lemma~\ref{lem:geomtaut} and the $L^\infty$ stability of Proposition~\ref{prop:linfty_stability},  we prove in Theorem~\ref{thm:contact_set_inclusion} that the contact sets are nonincreasing with respect to the regularization parameter. Independently of that result, in Proposition~\ref{prop:jump-inclusion} we obtain, directly from the optimality conditions, that the jump set of the solution of the ROF problem is contained in the intersection of the jump set of its input and the contact sets of the corresponding taut string problem. This last result is then used in Proposition~\ref{prop:from-contact-to-jump} to conclude that monotonicity of the contact sets implies monotonicty of the jump sets.
Finally, in Proposition \ref{prop:monot_ampl}, we show how the strategy we used to prove the monotonicity of the jump set can be used also to assert the monotonicity of the amplitude of the jump set, thereby completing the proof of Theorem~\ref{thm:main}.


\subsection{Notation and preliminaries}
The regularization term in \eqref{eq:Irof} is the total variation, defined as
\[TV(u,(0,1)) := \sup\left\{ \int_0^1 u(x) \psi'(x) \dd x :\, \psi\in C^1_c(0,1), \ |\psi(x)|\leq 1 \text{ for all }x \in (0,1)\right\},\]
with the space of functions of bounded variation being given by
\[BV(0,1):=\left\{ u \in L^1(0,1):\, TV(u,(0,1)) < + \infty \right\} \text{ with }\|u\|_{BV(0,1)} := \|u\|_{L^1(0,1)} + TV(u,(0,1)).\]
Equivalently, $BV(0,1)$ can be seen as the subset of $u \in L^1(0,1)$ whose distributional derivative $Du$ is a finite signed Radon measure, that is,
\begin{equation}\label{eq:distder} \int_0^1 \psi(x) \dd Du(x) = - \int_0^1 u(x) \psi'(x) \dd x \text{ for all }\psi \in C_c^\infty(0,1).\end{equation}
We will occasionally work with the usual Sobolev spaces $W^{k,p}(0,1)$ of functions $u \in L^p(0,1)$ whose distributional derivative $Du$ (see \eqref{eq:distder}) satisfies $Du \in L^p(0,1)$.
Since these are all defined by integration, strictly speaking they are spaces of equivalence classes of functions, up to equality a.e.~with respect to the Lebesgue measure $\mathcal{L}^1$ on $(0,1)$. As we will be dealing with pointwise properties of these functions, we need to pick specific representatives so that equality between functions is understood to be everywhere in $(0,1)$. For $u \in BV(0,1)$, we will work with so-called good representatives $\tilde u:(0,1) \to \R$ satisfying 
\begin{equation}\label{eq:pwvar}TV(u,(0,1))= \sup \left\{ \sum_{i=1}^{n-1} |\tilde u(x_{i+1})- \tilde u(x_i)| :\, n \geq 2, \ 0<x_1<\ldots<x_n < 1 \right\},\end{equation}
where the right-hand side is known as the pointwise variation of $\tilde u$.
This property implies \cite[Thm.~3.28]{AmbFusPal00} that the left and right limits
\[
\lim_{y \to x^-} \tilde u(y) \text{ and } \lim_{y \to x^+} \tilde u(y) \text{ exist for all } x \in (0,1),
\]
and that these are equal for all $x$ such that $Du(\{x\})=0$. Good representatives are not uniquely defined by the property \eqref{eq:pwvar}; some specific choices are the left-continuous, right-continuous, and precise representatives defined for $x \in (0,1)$ as
\begin{equation}\label{eq:r_l_limit}
u^\ell(x) := u(0) + Du((0,x)), \quad\quad \ u^r(x) := u(0) + Du((0,x]),
\end{equation}
and
\[
\ u^\ast(x) := \lim_{\delta \to 0^+} \aint_{x - \delta}^{x+ \delta} u(t) \dd t = \frac{u^\ell(x)+u^r(x)}{2},
\]
respectively. Note that the limit in the definition of $u^\ast(x)$ is known as the approximate limit of $u$ at $x$, and $u(0)$ is to be understood in the sense of traces, satisfying in particular that \cite[Thm.~5.7]{EvaGar15}
\[\lim_{\delta \to 0^+} \aint_0^{\delta} |u(t) - u(0)| \dd t = 0.\]
In the following, we always identify $u \in BV(0,1)$ or $u \in W^{k,p}(0,1)$ with their precise representative $u^\ast$. 

We will also work with the space $AC(0,1)$ of absolutely continuous functions, that is, $u \in AC(0,1)$ whenever for every $\eps >0$ there is some $\delta >0$ such that
\[\sum_{i=1}^n |u(b_i) - u(a_i)| < \eps \text{ for all pairwise disjoint } (a_i,b_i) \subset (0,1),\ i=1,\ldots,n, \text{ with }\sum_{i=1}^n (b_i - a_i) < \delta.\]
Functions in $AC(0,1)$ are differentiable $\mathcal{L}^1$-a.e.~\cite[Prop.~3.9]{Leo17}, and for them the identification with precise representatives implies $AC(0,1) = W^{1,1}(0,1)$ and $Du(x)=u'(x)$ for $\mathcal{L}^1$-a.e.~$x \in (0,1)$. Moreover, under the same identification, we also have
\[W^{1,\infty}(0,1) = \left\{ u:(0,1) \to \R :\, \Lip(u):=\sup_{x \neq y}\frac{|u(x)-u(y)|}{|x-y|}< + \infty \right\}, \text{ and }\Lip(u) = \|Du\|_{L^\infty(0,1)}.\]

Finally, for any locally integrable function $u \in L^1_\loc(0,1)$, one can define the jump set $J_u$ as the set of $x \in (0,1)$ for which the left and right approximate limits 
\[u^-(x):= \lim_{y \to x^-} \aint_y^x u(t) \dd t \quad\text{ and }\quad u^+(x):= \lim_{y \to x^+} \aint_x^y u(t) \dd t\]
both exist but are different. We remark that here and in the sequel, we will use this definition for $u^-$ and $u^+$, which differs from the common convention for BV functions in more than one dimension, in which these are respectively used to denote the lower and higher value at either side of a jump point. If additionally $u \in BV(0,1)$, then (the precise representative of) $u$ is also differentiable $\mathcal{L}^1$-a.e., and we have the decomposition
\[Du = (u') \mathcal{L}^1 + \big[\max(u^-, u^+)-\min(u^-, u^+)\big] \mathcal{H}^0\mres J_u + D^c u,\]
where $\mathcal{H}^0$ is the counting measure and the Cantor part $D^c u$ is singular with respect to the other two terms and vanishes on all countable subsets of $(0,1)$.


\section{The taut string problem}\label{sect:TS}

\begin{problem}[Taut String]\label{prob:taut} Let \(f\in L^2(0,1)\) and \(\alpha\in
(0,+\infty)\). Define
\begin{equation}\label{eq:defF}
\begin{aligned}
F(t):=\int_0^t f(x) \dd x \quad \text{for } t\in [0,1].
\end{aligned}
\end{equation}
Set
\[
\Acal_\alpha:=\left\{ U\in AC(0,1)\colon \sup_{x \in (0,1)} |U(x)-F(x)| \leq \alpha,\  U(0)=F(0)=0, \ U(1)=F(1) \right\},\]
and consider the functional \(\mathcal{J}\colon AC(0,1)\to[0,+\infty)
\) defined, for \(U\in AC(0,1) \), as
\begin{equation}
\label{eq:Jtaut}
\begin{aligned}
\mathcal{J}[U]:=\int_0^1  \sqrt{1+\big[U'(x)\big]^2} \dd x.
\end{aligned}
\end{equation}
Find \(U_\alpha\in \Acal_\alpha \) such that
\begin{equation}
\label{eq:minTaut}
\begin{aligned}
\mathcal{J}[U_\alpha] = \min_{U\in\Acal_\alpha } \mathcal{J}[U].
\end{aligned}
\end{equation}
\end{problem}
By \cite[Lemma~2]{Gra07}, there is a unique solution $U_\alpha$ to Problem~\ref{prob:taut} (see Figure \ref{fig:rofvstaut} for a numerical example). We now state the equivalence between the ROF Problem \ref{prob:rof} and the taut string Problem \ref{prob:taut}, which is a special case of a more general \emph{universal minimality} of the solution to the ROF Problem~\ref{prob:rof}, as addressed in Theorem~\ref{thm:uni-min} and Remark~\ref{rmk:onminimality}.

\begin{figure}[!ht]
\begin{tikzpicture}
\begin{axis}[height=0.4\textwidth,width=0.515\textwidth, clip=true,
    xmin=0,xmax=1,
    ymin=-1,ymax=1.5,
    enlargelimits={abs=1cm},
    axis lines = left,
    axis line style={=>},
    xlabel style={at={(ticklabel* cs:1)},anchor=north west},
    ylabel style={at={(ticklabel* cs:1)},anchor=south west}
    ]
\addplot[samples=100, domain=0:1, black, thick] {sin(4*pi*deg(x+0.25))+cos(pi*deg(x+0.25)/2)/2} node[anchor=south west, pos=0.39, font=\footnotesize]{$f$};
\addplot[samples=2, domain=0:0.206, color=OliveGreen, very thick] {-0.145};
\addplot[samples=20, domain=0.206:0.284, color=OliveGreen, very thick] {sin(4*pi*deg(x+0.25))+cos(pi*deg(x+0.25)/2)/2};
\addplot[samples=2, domain=0.284:0.456, color=OliveGreen, very thick] {0.747} node[anchor=north, pos=0.51, font=\footnotesize]{$u_\alpha = U'_\alpha$};
\addplot[samples=20, domain=0.456:0.545, color=OliveGreen, very thick] {sin(4*pi*deg(x+0.25))+cos(pi*deg(x+0.25)/2)/2};
\addplot[samples=2, domain=0.545:0.717, color=OliveGreen, very thick] {-0.373};
\addplot[samples=20, domain=0.717:0.79, color=OliveGreen, very thick] {sin(4*pi*deg(x+0.25))+cos(pi*deg(x+0.25)/2)/2};
\addplot[samples=2, domain=0.79:1, color=OliveGreen, very thick] {0.450};
\end{axis}
\end{tikzpicture}
\begin{tikzpicture}
\begin{axis}[height=0.4\textwidth,width=0.515\textwidth, clip=true,
    xmin=0,xmax=1,
    ymin=-0.1,ymax=0.25,
    ytick={-0.1,-0.05,...,0.2},
    yticklabel style={/pgf/number format/.cd, fixed},
    enlargelimits={abs=1cm},
    axis lines = left,
    axis line style={=>},
    xlabel style={at={(ticklabel* cs:1)},anchor=north west},
    ylabel style={at={(ticklabel* cs:1)},anchor=south west}
    ]
\addplot[samples=100, domain=0:1, black, thick] {-cos(4*pi*deg(x+0.25))/4/pi+sin(pi*deg(x+0.25)/2)/pi-0.202-0.03} node[anchor=north, pos=0.75, font=\footnotesize]{$F-\alpha$};
\addplot[samples=100, domain=0:1, black, thick] {-cos(4*pi*deg(x+0.25))/4/pi+sin(pi*deg(x+0.25)/2)/pi-0.202+0.03} node[anchor=south, pos=0.5, font=\footnotesize]{$F+\alpha$};
\addplot[samples=2, domain=0:0.206, color=OliveGreen, very thick] coordinates {(0,0) (0.2064,-0.0308)};
\addplot[samples=20, domain=0.206:0.284, color=OliveGreen, very thick] {-cos(4*pi*deg(x+0.25))/4/pi+sin(pi*deg(x+0.25)/2)/pi-0.202+0.03};
\addplot[samples=2, color=OliveGreen, very thick] coordinates {(0.2825,-0.0088) (0.4565,0.121)};
\addplot[samples=20, domain=0.456:0.545, color=OliveGreen, very thick] {-cos(4*pi*deg(x+0.25))/4/pi+sin(pi*deg(x+0.25)/2)/pi-0.202-0.03}  node[anchor=south, pos=0.51, font=\footnotesize]{$U_\alpha$};
\addplot[samples=2, color=OliveGreen, very thick] coordinates {(0.544,0.1374) (0.719,0.0723)};
\addplot[samples=20, domain=0.717:0.79, color=OliveGreen, very thick] {-cos(4*pi*deg(x+0.25))/4/pi+sin(pi*deg(x+0.25)/2)/pi-0.202+0.03};
\addplot[samples=2, color=OliveGreen, very thick] coordinates {(0.785,0.0738) (1,0.1718)};
\end{axis}
\end{tikzpicture}
\caption{ROF solution $u_\alpha$, constraints $F-\alpha$, $F+\alpha$ and taut string solution $U_\alpha$ with $f(x)=\sin(4\pi(x+1/4))+\cos(\pi/2(x+1/4))/2$ and $\alpha = 0.03$.}
\label{fig:rofvstaut}
\end{figure}
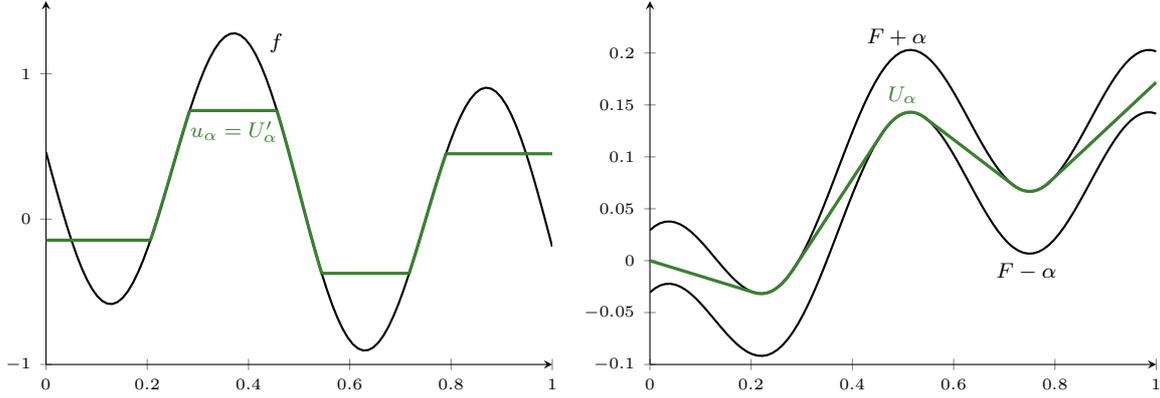

\begin{theorem}[{cf.~\cite[Proposition~2]{Gra07}}]\label{thm:gra07} Assume that \(f\in L^\infty(0,1)\), and fix \(\alpha\in(0,+\infty)\).  Let \(u_\alpha \in BV(0,1)\) be the unique solution to Problem~\ref{prob:rof} and  \(U_\alpha\in \Acal_\alpha \)  the unique solution to Problem~\ref{prob:taut}.  Then,
 \(U_\alpha'\in BV(0,1)\) and%
\begin{equation}
\label{eq:roftaut}
\begin{aligned}
u_\alpha = U_\alpha',
\end{aligned}
\end{equation}
where the equation holds for the good representatives.
\end{theorem}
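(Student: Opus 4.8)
The plan is to show that the primitive of the ROF minimizer $u_\alpha$ coincides with the taut string minimizer $U_\alpha$; differentiating then gives \eqref{eq:roftaut}. \textbf{Step 1 (reduction to primitives).} A constant‑shift argument yields $\int_0^1 u_\alpha\dd x=\int_0^1 f\dd x=F(1)$: since $TV$ is invariant under adding constants, $c\mapsto\mathcal{I}_\alpha[u_\alpha+c]$ is strictly convex with derivative $\int_0^1(u_\alpha+c-f)\dd x$, which must vanish at $c=0$ by minimality. Hence $W(x):=\int_0^x u_\alpha(t)\dd t$ lies in $W^{1,\infty}(0,1)$ (recall $BV(0,1)\embed L^\infty(0,1)$), with $W(0)=0$, $W(1)=F(1)$, and $W'=u_\alpha$ a.e. It then suffices to prove $W\in\Acal_\alpha$ and $\mathcal{J}[W]\leq\mathcal{J}[V]$ for every $V\in\Acal_\alpha$, since uniqueness in Problem~\ref{prob:taut} forces $W=U_\alpha$.

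\textbf{Step 2 (optimality conditions for ROF).} Viewing $\mathcal{I}_\alpha$ as a convex functional on $L^2(0,1)$ (with $TV=+\infty$ off $BV$), minimality gives $0\in(u_\alpha-f)+\alpha\,\partial TV(u_\alpha)$. As $TV$ is proper, $L^2$‑lower semicontinuous and positively one‑homogeneous, it is the support function of $\partial TV(0)$, and a direct computation gives $\partial TV(0)=\{z':z\in H^1_0(0,1),\ \|z\|_{L^\infty(0,1)}\leq1\}$ together with $\partial TV(u_\alpha)=\{p\in\partial TV(0):\int_0^1 p\,u_\alpha\dd x=TV(u_\alpha,(0,1))\}$. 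Hence there is $z\in H^1_0(0,1)$ (continuous, since $H^1(0,1)\embed C([0,1])$) with $\|z\|_{L^\infty(0,1)}\leq1$ such that $f-u_\alpha=\alpha z'$ a.e.\ and $\int_0^1 z'u_\alpha\dd x=TV(u_\alpha,(0,1))$. Integrating the first relation from $0$ and using $z(0)=0$ gives $W=F-\alpha z$ on $[0,1]$; with $z(1)=0$ this yields $W(1)=F(1)$ and $\|W-F\|_{L^\infty(0,1)}=\alpha\|z\|_{L^\infty(0,1)}\leq\alpha$, so $W\in\Acal_\alpha$. Integrating by parts in the second relation gives $-\int_0^1 z\dd Du_\alpha=|Du_\alpha|((0,1))$, which together with $\|z\|_{L^\infty(0,1)}\leq1$ forces the polar identity $Du_\alpha=-z\,|Du_\alpha|$; in particular $|z|=1$ holds $|Du_\alpha|$‑a.e.

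\textbf{Step 3 ($W$ minimizes $\mathcal{J}$, and conclusion).} Set $\Phi(p):=\sqrt{1+p^2}$ and $g:=\Phi'(u_\alpha)=u_\alpha/\sqrt{1+u_\alpha^2}$; since $\Phi'$ is a globally Lipschitz $C^1$ function and $u_\alpha\in BV(0,1)$, also $g\in BV(0,1)$ with $\|g\|_{L^\infty(0,1)}\leq1$. For any $V\in\Acal_\alpha$, convexity of $\Phi$ and $W'=u_\alpha$ a.e.\ give
\[
\mathcal{J}[V]-\mathcal{J}[W]\geq\int_0^1 g\,(V'-u_\alpha)\dd x .
\]
Put $\eta:=V-W\in AC(0,1)$, so $\eta(0)=\eta(1)=0$ and $\eta'=V'-u_\alpha$; integrating by parts (legitimate since $g\in L^\infty$, $\eta\in AC$ and $\eta$ vanishes at the endpoints) gives $\int_0^1 g\,\eta'\dd x=-\int_0^1\eta\dd Dg$. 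By the chain rule for $BV$ functions, $Dg$ equals an a.e.\ strictly positive density times $Du_\alpha$ (because $\Phi''>0$ and the difference quotients of $\Phi'$ across jumps are positive), so Step~2 yields $Dg=-z\,|Dg|$, hence $-\int_0^1\eta\dd Dg=\int_0^1\eta z\dd|Dg|$. Finally, from $W=F-\alpha z$ and the pointwise tube bounds $F-\alpha\leq V\leq F+\alpha$ we obtain $\eta=(V-F)+\alpha z\geq\alpha(z-1)$ and $\eta\leq\alpha(z+1)$ on $(0,1)$; since $|z|=1$ holds $|Dg|$‑a.e.\ (as $|Dg|\ll|Du_\alpha|$), this gives $\eta z\geq0$ $|Dg|$‑a.e. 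Therefore $\int_0^1 g\,\eta'\dd x=\int_0^1\eta z\dd|Dg|\geq0$, so $\mathcal{J}[V]\geq\mathcal{J}[W]$ for all $V\in\Acal_\alpha$. By uniqueness in Problem~\ref{prob:taut}, $W=U_\alpha$; hence $U_\alpha'=W'=u_\alpha$ $\mathcal{L}^1$‑a.e., so $U_\alpha'\in BV(0,1)$, and since precise representatives depend only on a.e.\ values, $u_\alpha=U_\alpha'$ as good representatives, which is \eqref{eq:roftaut}.

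\textbf{Main obstacle.} The two delicate points are the $L^2$‑subdifferential characterization of $TV$ in Step~2 and the measure‑theoretic bookkeeping in Step~3, namely verifying that $Dg=D(\Phi'\circ u_\alpha)$ shares the polar vector of $Du_\alpha$ on \emph{each} part of its decomposition (absolutely continuous, Cantor, and jump parts), which is exactly where $\Phi''>0$ and the strict monotonicity of $\Phi'$ enter. Once $Dg=-z\,|Dg|$ is in hand, the sign of the variational inequality is forced directly by the pointwise tube constraint, so that no structural analysis of the contact sets of $U_\alpha$ is required.
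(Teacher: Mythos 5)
Your proof is correct, and it is genuinely different from what the paper does: the paper does not prove Theorem~\ref{thm:gra07} at all, but cites \cite[Proposition~2]{Gra07} (and separately establishes the related universal minimality, Theorem~\ref{thm:uni-min}, by invoking \cite[Theorem~4.46]{SchGraGroHalLen09} and verifying the equivalence of the admissible sets). What you supply instead is a self-contained dual-certificate argument in the spirit of the Fenchel (pre)duality alluded to in Remark~\ref{rmk:onminimality}: the Euler--Lagrange inclusion produces $z\in H^1_0(0,1)$ with $\|z\|_\infty\leq 1$, $W=F-\alpha z$ and $Du_\alpha=-z\,|Du_\alpha|$, and the same $z$ then certifies minimality of $W$ for $\mathcal{J}$ via the convexity inequality and the tube constraint. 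The two places that carry the real weight are handled correctly: (i) the identification $\partial TV(0)=\{z'\colon z\in H^1_0(0,1),\ \|z\|_{L^\infty}\leq 1\}$ requires this set to be $L^2$-closed, which holds in one dimension because $z_n'\to p$ in $L^2$ forces uniform convergence of the primitives and preserves the boundary conditions and the sup bound; and (ii) the chain-rule claim that $Dg=D(\Phi'\circ u_\alpha)$ has the same polar as $Du_\alpha$ needs the jump part to be checked separately from the diffuse part, which your appeal to the strict monotonicity of $\Phi'$ (for the difference quotients across jumps) and $\Phi''>0$ (for the absolutely continuous and Cantor parts) does. A pleasant feature of your route is that it yields the equivalence without any structural analysis of the contact sets and makes explicit why the graph-length integrand plays no special role, which is exactly the content of Theorem~\ref{thm:uni-min}: the certificate $z$ works verbatim for any convex $c$ in place of $\sqrt{1+t^2}$. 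The only cosmetic caveat is that the final sentence of the theorem asserts equality of \emph{good representatives}, which, as you note, follows from the a.e.~identity because good representatives of a $BV$ function are determined up to their values on the (countable) jump set, where the precise representatives of $u_\alpha$ and $U_\alpha'$ coincide by construction.
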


As mentioned above, it turns out that minimizers of the ROF Problem~\eqref{prob:rof} are also minimizers of a large class of variational problems.
This surprising fact is proved in \cite[Theorems 4.35 and~4.46]{SchGraGroHalLen09}. Due to its importance for our strategy (and its interest), we prove it in here by detailing the proof in the aforementioned reference.

\begin{theorem}[Universal minimality condition] \label{thm:uni-min}
	Fix  
	\(\alpha\in(0,+\infty)\),  and let  \(U_\alpha\in \Acal_\alpha\)  be the
	unique
	solution to Problem~\ref{prob:taut}. Then,  \(U_\alpha\) is also the unique
	function satisfying 
	\begin{equation}\label{eq:univmin}
		\begin{aligned}
			U_\alpha \in \argmin \bigg\{ \int_0^1 \!c(v') \dd x
			\colon v\in W^{1,2}(0,1),\, \Vert v- F\Vert_\infty \leq \alpha,\  v(0)=F(0)=0,
			\ v(1)=F(1) \bigg\},
		\end{aligned}
	\end{equation}
	for any strictly convex function $c: \R \to \R$.
 \end{theorem}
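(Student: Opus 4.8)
The plan is to show that the unique solution $U_\alpha$ of the taut string Problem~\ref{prob:taut} (whose integrand is the strictly convex function $s \mapsto \sqrt{1+s^2}$) is simultaneously the minimizer of the energy in \eqref{eq:univmin} for \emph{every} strictly convex $c$. The key structural fact behind this is that, on the feasible set with the fixed endpoint conditions and the obstacle constraint $\|v-F\|_\infty \le \alpha$, every admissible $v$ can be \emph{projected onto $U_\alpha$} in a way that only decreases the slope spread: more precisely, $v'$ can be rewritten as a ``mixture'' whose average is $U_\alpha'$. I would first record the reason $U_\alpha$ is feasible for \eqref{eq:univmin}: since $U_\alpha \in \Acal_\alpha \subset AC(0,1) = W^{1,1}(0,1)$ and $U_\alpha' \in BV(0,1) \subset L^\infty(0,1)$ by Theorem~\ref{thm:gra07}, we get $U_\alpha \in W^{1,\infty}(0,1) \subset W^{1,2}(0,1)$, and it satisfies the same boundary and obstacle conditions, so the admissible class in \eqref{eq:univmin} is nonempty and contains $U_\alpha$.

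The heart of the argument is the following claim: for any admissible competitor $v \in W^{1,2}(0,1)$ with $\|v-F\|_\infty \le \alpha$, $v(0)=0$, $v(1)=F(1)$, one has
\begin{equation}\label{eq:jensen-key}
\int_0^1 c(v'(x)) \dd x \ge \int_0^1 c(U_\alpha'(x)) \dd x
\end{equation}
for every convex $c$, with strict inequality when $c$ is strictly convex and $v \neq U_\alpha$. To prove this I would exploit the geometric structure of $U_\alpha$ coming from the taut string characterization (as in \cite{Gra07}): on each maximal open interval where $U_\alpha$ touches neither obstacle, $U_\alpha$ is affine; on intervals where it lies on the upper obstacle $F+\alpha$ it is concave, and on the lower obstacle $F-\alpha$ it is convex; and in particular $U_\alpha'$ is \emph{monotone on each maximal affine-or-obstacle piece and constant between consecutive contact intervals}. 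The cleanest route is the classical taut-string argument: partition $(0,1)$ into the intervals $(a_i,b_i)$ on which $U_\alpha$ is affine (the ``free'' intervals) and the complementary set where $U_\alpha$ coincides with one of the obstacles. On a free interval, $U_\alpha(a_i)$ and $U_\alpha(b_i)$ are pinned to values on the obstacles (or to the domain endpoints), and since $v$ also satisfies $|v-F|\le\alpha$ with the same endpoints of the domain, a telescoping/rearrangement argument shows that $\aint_{a_i}^{b_i} v' = \aint_{a_i}^{b_i} U_\alpha' = U_\alpha'|_{(a_i,b_i)}$ (constant), whence by Jensen's inequality $\int_{a_i}^{b_i} c(v') \ge \int_{a_i}^{b_i} c(U_\alpha')$. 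On the obstacle part, $U_\alpha = F \pm \alpha$ forces $U_\alpha' = F' = f$ a.e.\ there and $v$ can only be ``pulled in'' from the obstacle; combining with convexity one again gets the pointwise-averaged comparison. Summing over all pieces yields \eqref{eq:jensen-key}. Strict convexity upgrades every Jensen step to an equality-iff-constant statement, so equality in \eqref{eq:jensen-key} forces $v' = U_\alpha'$ a.e.\ on each free interval and $v=U_\alpha$ on the obstacle part; together with the matched boundary value $v(0)=0=U_\alpha(0)$ this gives $v = U_\alpha$ everywhere, proving uniqueness.

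The main obstacle I anticipate is making the decomposition into ``free'' and ``obstacle'' intervals rigorous and handling the possibly complicated (even Cantor-like) contact set without assuming $f \in BV$: one must argue that the complement of the contact set is a countable union of open intervals on which $U_\alpha$ is affine, and that the endpoint values of $v$ and $U_\alpha$ on these intervals are comparable in the precise direction needed for the averaging identity. This is where I would lean on \cite[Lemma~2]{Gra07} and its proof, and on the variational characterization of $U_\alpha$ as the shortest curve in the tube $\{|v-F|\le\alpha\}$ with fixed endpoints (the literal ``taut string''), from which the piecewise-affine-off-contact structure and the one-sided obstacle behavior are standard. An alternative, slicker packaging avoiding explicit interval bookkeeping is to use the Euler--Lagrange/variational inequality for $U_\alpha$ in Problem~\ref{prob:taut} together with the fact that the optimality of a taut string is \emph{insensitive to the choice of strictly convex integrand} because the only information it uses is that the minimizer has minimal (in the convex order) distribution of slopes subject to the constraints; I would present whichever of these two is shorter after checking the details against \cite[Theorems 4.35 and 4.46]{SchGraGroHalLen09}.
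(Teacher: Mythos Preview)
Your approach differs from the paper's, and the central step contains a genuine gap.

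The paper does not use the free/contact-interval decomposition at all. It invokes \cite[Theorem~4.46]{SchGraGroHalLen09}, which already gives that $U_\alpha'$ is the unique minimizer of $\int_0^1 c(w)\dd x$ over $w\in L^2(0,1)$ with $\|w-f\|_\ast\le\alpha$, where $\|\varphi\|_\ast=\sup\{\int\varphi\,\phi : TV(\phi)\le 1\}$. The only work done in the paper is then to show that, passing to primitives $\hat w(t)=\int_0^t w$, the constraint set $\{\hat w\in W^{1,2}:\hat w(0)=0,\ \|\hat w'-f\|_\ast\le\alpha\}$ coincides \emph{exactly} with the taut-string admissible set $\{v\in W^{1,2}:\|v-F\|_\infty\le\alpha,\ v(0)=F(0),\ v(1)=F(1)\}$. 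This equality of sets is checked by testing the dual norm against constants (which forces $\hat w(1)=F(1)$) and against step functions $\pm\1_{(0,x_0]}$ (which recovers the pointwise bound $|\hat w-F|\le\alpha$).

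The gap in your direct argument is the claim that on a free interval $(a_i,b_i)$ one has $\aint_{a_i}^{b_i} v'=\aint_{a_i}^{b_i} U_\alpha'$. This would need $v(a_i)=U_\alpha(a_i)$ and $v(b_i)=U_\alpha(b_i)$, but the competitor $v$ is only constrained by $|v-F|\le\alpha$: if, say, $a_i\in C_\alpha^-$ so that $U_\alpha(a_i)=F(a_i)-\alpha$, you only get $v(a_i)\ge U_\alpha(a_i)$, and the two averages generically differ. Jensen then yields $\int_{a_i}^{b_i} c(v')\ge(b_i-a_i)\,c\big(\aint_{a_i}^{b_i} v'\big)$, but comparing $c\big(\aint_{a_i}^{b_i} v'\big)$ with $c\big(U_\alpha'|_{(a_i,b_i)}\big)$ would require monotonicity of $c$, which is not assumed. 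What is really needed is the global statement that $U_\alpha'$ is dominated by every admissible $v'$ in the Hardy--Littlewood--P\'olya convex order; establishing that is exactly the nontrivial content of the result you cite from \cite{SchGraGroHalLen09}, and it does not reduce to piecewise Jensen plus telescoping. Your fallback of ``checking the details against \cite[Thms.~4.35, 4.46]{SchGraGroHalLen09}'' is in fact what the paper does.
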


\begin{proof}
	By \cite[Theorem~4.46]{SchGraGroHalLen09}, we know that \(U'_\alpha \) is  the unique
	function satisfying 
	\begin{equation}\label{eq:minuniv}
		\begin{aligned}
			\int_0^1 c\big(U_\alpha'\big) \dd x  = \min \bigg\{ \int_0^1 c(w) \dd x
			\colon w\in L^2(0,1),\, \Vert w - f\Vert_\ast \leq \alpha \bigg\},
		\end{aligned}
	\end{equation}
	where
	\begin{equation}\label{eq:defastnum}
		\begin{aligned}
			\Vert\varphi \Vert_\ast:= \sup \left\{ \int_0^1 \varphi\, \phi\dd x \colon
			\phi \in L^2(0,1), \,  TV(\phi,(0,1)) \leq 1  \right\}.
		\end{aligned}
	\end{equation}

	By setting \(\hat w(t):=\int_0^t w(s)\dd s\),   we can re-write \eqref{eq:minuniv}
	as  
	\begin{equation*}
		\begin{aligned}
			\int_0^1 c\big(U_\alpha'\big) \dd x  = \min \bigg\{ \int_0^1 c(\hat w') \dd
			x
			\colon \hat w\in W^{1,2}(0,1),\, \hat w(0)=0, \Vert\hat w' - f\Vert_\ast
			\leq \alpha \bigg\}.
		\end{aligned}
	\end{equation*}
	Thus, to conclude the proof, it suffices to show that that 
	\begin{equation}
		\label{eq:eqsets}
		\begin{aligned}
			&\left\{ v \in W^{1,2}(0,1)\colon \Vert  v- F\Vert_\infty
			\leq \alpha,\,  v(0) = F(0), \,  v(1)= F(1) \right\}\\
			&\hspace{4cm} = \left\{ \hat w \in W^{1,2}(0,1)\colon \hat w(0)=0, \Vert\hat w'
			- f\Vert_\ast
			\leq \alpha \right\}.
		\end{aligned}
	\end{equation}

	Assume that \( v \in W^{1,2}(0,1)\) is such that \( \Vert
	v- F\Vert_\infty
	\leq \alpha\), \( v(0) = F(0)\), and  \(  v(1)= F(1)\). Then, the condition
	 \(F'=f\)  \(\Lcal^1\)-a.e.~in \((0,1)\) yields
	\begin{equation*}
		\begin{aligned}
			\Vert  v' - f\Vert_\ast &= \sup \left\{ \int_0^1 ( v - F)'\, \phi\dd
			x \colon
			\phi \in L^2(0,1), \,  TV(\phi,(0,1)) \leq 1  \right\} \\
			&= \sup \left\{ \int_0^1 (F- v )\dd D\phi
			\colon
			\phi \in L^2(0,1), \,  TV(\phi,(0,1)) \leq 1  \right\}\\
			&\leq \sup \left\{ \Vert
			v- F\Vert_\infty TV(\phi,(0,1))  \colon
			\phi \in L^2(0,1), \,  TV(\phi,(0,1)) \leq 1  \right\}\\
			&\leq \alpha.
		\end{aligned}
	\end{equation*}
	Hence, recalling that \(F(0)=0\),  the set on the left-hand side of \eqref{eq:eqsets}
	is contained in the set on the right-hand side of \eqref{eq:eqsets}.
	
	Conversely, assume that \(\hat w \in W^{1,2}(0,1)\) is such that  \(\hat
	w(0)=0\) and
	\(\Vert\hat w' - f\Vert_\ast
	\leq \alpha\). Then, setting \(\phi:=\lambda\in\RR\) in \eqref{eq:defastnum}
	we get that
	\begin{equation*}
		\begin{aligned}
			\alpha \geq \lambda\int_0^1(\hat w - F)'\dd x = \lambda (\hat w(1) - F(1))
		\end{aligned}
	\end{equation*}
	for all \(\lambda \in\RR\). Thus, \(\hat w(1) = F(1)\).
	
	Now, for each \(x_0\in(0,1)\), define the test function
	%
	\begin{equation*}
		\begin{aligned}
			\phi_{x_0}(x):= \begin{cases}
				1  &\text{if } 0 < x \leq x_0,\\
				0  &\text{if } x_0 < x <1.
			\end{cases}
		\end{aligned}
	\end{equation*}
    With \(\phi:=\pm \phi_{x_0}\) as test functions in \eqref{eq:defastnum}, yields
	\begin{equation*}
		\begin{aligned}
			\alpha \geq \pm  \int_0^1 (\hat w - F)' \phi_{x_0}\dd x = \pm\int(F - \hat
			w)\dd D
			\phi_{x_0} =\pm (F-\hat w)(x_0), 
		\end{aligned}
	\end{equation*}
	where we used the identities \(\hat w(0)=0=F(0)\) and  \(\hat w(1) = F(1)\).
	Because \(x_0\)
	was arbitrarily taken, the preceding estimate shows that \(\Vert\hat w- F\Vert_\infty
	\leq \alpha\), and this concludes the proof of \eqref{eq:eqsets}.
\end{proof}

\begin{remark}\label{rmk:onminimality}
This universal minimality criterion allows, in particular, to relate Problem \ref{prob:taut} with a canonical choice of Fenchel (pre)dual of Problem \ref{prob:rof}, which results in precisely \eqref{eq:univmin} with the integrand $c(t):=t^2/2$. This fact is well-known, for a proof we refer the reader to \cite{Ove19} in the context of continuum taut string methods and \cite{BreIglMer22} allowing for weights in the total variation and fidelity term.
\end{remark}

We now introduce some sets associated with the taut string problem that will be used later in the proof of the main result.

\begin{definition}
Given  \(\alpha\in(0,+\infty)\), let \(U_\alpha\) be the unique solution to the taut string problem, Problem~\ref{prob:taut}.
Define the upper and lower contact sets as
\begin{equation}
\label{eq:contactsets}
\begin{aligned}
&C_\alpha^+:=\left\{ x\in (0,1)\colon U_\alpha(x)= F(x) + \alpha\right\},\\
&C_\alpha^-:=\left\{ x\in (0,1)\colon U_\alpha(x)= F(x) - \alpha\right\},
\end{aligned}
\end{equation}
respectively.
\end{definition}

\begin{remark}
We observe that the sets  \((0,1)\setminus(C^-_\alpha \cup C^+_\alpha)\),  \(C_\alpha^+\), and  \(C_\alpha^-\) are mutually disjoint, and their union equals the interval \((0,1)\) because \(U_\alpha\in\Acal_\alpha \).
\end{remark}

It is possible to obtain convexity and concavity properties of $U_\alpha$ in \((0,1)\setminus C^+_\alpha\) and \((0,1)\setminus C^-_\alpha\) respectively, as already done in \cite{GraObe08} and recalled in the next lemma. Moreover, we also give a basic bound (see \cite[Lemma~9]{BauMunSieWar17} for related result in the context of minimization of the number of modes via taut string minimization) on how often the solution $U_\alpha$ can switch from $C_\alpha^-$ to $C_\alpha^+$.

\begin{lemma}\label{lem:geomtaut}
Fix   \(\alpha\in(0,+\infty)\),  let \(U_\alpha\in \Acal_\alpha\)  be the unique solution to  Problem~\ref{prob:taut}.
Then,
\begin{enumerate}

\item \(U_\alpha\) is convex on each connected component of $(0,1) \setminus C_\alpha^-$;

\item  \(U_\alpha\) is concave  on each connected component of $(0,1) \setminus C_\alpha^+$;

\end{enumerate}
In particular, \(U_\alpha\) is affine  on each connected component of $(0,1)\setminus ( C^-_\alpha \cup C^+_\alpha )$. Moreover, 
\begin{enumerate}\setcounter{enumi}{2}

\item For each $x^- \in C_\alpha^-$ and $x^+ \in C_\alpha^+$, we have
\begin{equation}\label{eq:spacing}\big|x^- - x^+\big| > \frac{2\alpha}{\mathrm{Lip}(F)},\end{equation}
where $\mathrm{Lip}(F)$ is the Lipschitz constant of $F$. Consequently, 
\[U_\alpha\text{ is convex on }\left[x^+ - \frac{2\alpha}{\mathrm{Lip}(F)}, x^+ + \frac{2\alpha}{\mathrm{Lip}(F)}\right],\text{ and concave on }\left[x^- - \frac{2\alpha}{\mathrm{Lip}(F)}, x^- + \frac{2\alpha}{\mathrm{Lip}(F)}\right].\]
\end{enumerate}
\end{lemma}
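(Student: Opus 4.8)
All three items should follow from competitor constructions against the taut string functional $\mathcal{J}$, using two ingredients repeatedly: the integrand $t\mapsto\sqrt{1+t^2}$ is \emph{strictly} convex, so that a straight segment is strictly shorter than any non-affine graph with the same endpoints; and $F$ is Lipschitz with $\mathrm{Lip}(F)=\|f\|_\infty$, so that on a short interval the band $[F-\alpha,F+\alpha]$ is nearly straight. Throughout I use that $U_\alpha$ is the \emph{unique} minimizer of $\mathcal{J}$ on $\Acal_\alpha$, so any admissible competitor with $\mathcal{J}$-value $\le\mathcal{J}[U_\alpha]$ must coincide with $U_\alpha$.

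\textbf{Items (1) and (2).} I would prove (1) by contradiction. Suppose $U_\alpha$ is not convex on a connected component $(a,b)$ of $(0,1)\setminus C_\alpha^-$. Since $U_\alpha$ is continuous and a continuous function that is convex near every point is convex, there is $x_0\in(a,b)$ and, for every $\rho>0$, points $c<d$ in $(x_0-\rho,x_0+\rho)$ on whose chord the graph of $U_\alpha$ bulges strictly above. Put $2\beta:=U_\alpha(x_0)-F(x_0)+\alpha>0$ (positive because $x_0\notin C_\alpha^-$), choose $\rho>0$ so small that $U_\alpha-F+\alpha>\beta$ on $[x_0-\rho,x_0+\rho]\subset(a,b)$ and $2\rho\,\mathrm{Lip}(F)<\beta$, and pick such $c,d$ inside $(x_0-\rho,x_0+\rho)$. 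Define the competitor $\tilde U$ to equal $U_\alpha$ outside $[c,d]$ and to equal on $[c,d]$ the lower convex envelope $\check U$ of $U_\alpha|_{[c,d]}$ (the largest convex minorant; it agrees with $U_\alpha$ at $c$ and $d$ and is absolutely continuous, being convex and bounded). Then $\tilde U\in\Acal_\alpha$: one has $\check U\le U_\alpha\le F+\alpha$, and writing $\check U(x)=\sum_i\lambda_i U_\alpha(x_i)$ as a convex combination with $x_i\in[c,d]$ yields $\check U(x)\ge\sum_i\lambda_i\bigl(F(x_i)+\beta-\alpha\bigr)\ge F(x)-\mathrm{Lip}(F)(d-c)+\beta-\alpha>F(x)-\alpha$. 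Moreover $\mathcal{J}[\tilde U]<\mathcal{J}[U_\alpha]$, because on each component of the open set $\{\check U<U_\alpha\}$ the envelope $\check U$ is affine with the same endpoint values as $U_\alpha$, hence a strictly shorter segment than the (there non-affine) graph of $U_\alpha$. This contradicts uniqueness. Item (2) follows by the same argument with the \emph{upper concave} envelope (or by applying (1) to $-U_\alpha$ with data $-F$). Finally, the ``affine'' conclusion is immediate: each component of $(0,1)\setminus(C_\alpha^-\cup C_\alpha^+)$ is contained in a component of $(0,1)\setminus C_\alpha^-$ and in a component of $(0,1)\setminus C_\alpha^+$, so there $U_\alpha$ is both convex and concave.

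\textbf{Item (3): the spacing estimate \eqref{eq:spacing} and its consequences.} Fix $x^-\in C_\alpha^-$, $x^+\in C_\alpha^+$, say $x^-<x^+$ (the reverse case is symmetric), and observe that a contact point is an extremum of $U_\alpha-F$: if $y\in C_\alpha^-$ then $U_\alpha(x)-U_\alpha(y)\ge F(x)-F(y)$ for all $x$, and if $y\in C_\alpha^+$ then $U_\alpha(x)-U_\alpha(y)\le F(x)-F(y)$ for all $x$; in particular every one-sided slope of $U_\alpha$ at a contact point is bounded above (below) by $\pm\mathrm{Lip}(F)$ from the appropriate side. Now let $a:=\max\bigl(C_\alpha^-\cap[x^-,x^+]\bigr)$ and $b:=\min\bigl(C_\alpha^+\cap[a,x^+]\bigr)$; then $x^-\le a<b\le x^+$, and $(a,b)$ meets neither contact set, so by the ``affine'' part of the Lemma $U_\alpha$ is affine on $[a,b]$ with slope $\mu=\bigl(F(b)-F(a)+2\alpha\bigr)/(b-a)$ joining $U_\alpha(a)=F(a)-\alpha$ to $U_\alpha(b)=F(b)+\alpha$. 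Using item (1) just to the right of $b$ (where $U_\alpha$ is convex, since $b\notin C_\alpha^-$) together with $U_\alpha\le F+\alpha$ past $b$, and item (2) just to the left of $a$ (where $U_\alpha$ is concave) together with $U_\alpha\ge F-\alpha$ before $a$, one controls $|\mu|$ by $\mathrm{Lip}(F)$; feeding this into $\mu(b-a)=F(b)-F(a)+2\alpha$ together with $|F(b)-F(a)|\le\mathrm{Lip}(F)(b-a)$ forces $b-a$, and hence $|x^--x^+|\ge b-a$, to exceed $\tfrac{2\alpha}{\mathrm{Lip}(F)}$; the inequality is strict because the extremal slope $\mu=\pm\mathrm{Lip}(F)$ would violate the band constraint just outside $[a,b]$. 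Granted \eqref{eq:spacing}, the convexity/concavity statements are immediate: the closed interval $\bigl[x^+-\tfrac{2\alpha}{\mathrm{Lip}(F)},x^++\tfrac{2\alpha}{\mathrm{Lip}(F)}\bigr]$ contains no point of $C_\alpha^-$, hence lies in the closure of a component of $(0,1)\setminus C_\alpha^-$, where $U_\alpha$ is convex by item (1); symmetrically for $x^-$ and $C_\alpha^+$.

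\textbf{Expected main obstacle.} The delicate point in (1)--(2) is admissibility of the envelope competitor: the lower convex envelope can a priori dip below the lower wall $F-\alpha$, and this is precisely why one must localize to an interval that is short compared to the ratio of the current gap-to-the-wall and $\mathrm{Lip}(F)$ — which in turn requires first isolating the failure of convexity at a definite point and at arbitrarily small scale. In item (3) the subtle step is extracting the exact constant $\tfrac{2\alpha}{\mathrm{Lip}(F)}$ (and the strict inequality): one must carefully combine the one-sided convexity/concavity of $U_\alpha$ at \emph{both} ends of the bridging affine segment with the fact that the segment must cross the full band width $2\alpha$, and rule out the borderline configuration in which the data $f$ is extremal on the segment and the neighbouring contacts would coalesce.
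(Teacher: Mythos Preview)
For items (1) and (2) your approach is essentially the paper's. Both localize to a short interval where the lower obstacle has positive slack and build an admissible competitor by replacing $U_\alpha$ with a convex minorant that matches at the endpoints---you use the lower convex envelope, the paper takes the pointwise minimum of $U_\alpha$ with the chord. Strict convexity of the length integrand then yields a strict energy drop unless $U_\alpha$ already lies below the chord. Your passage from local to global convexity via ``locally convex $\Rightarrow$ convex on an interval'' is a clean replacement for the paper's explicit overlapping-interval patching. One small point: your localization uses $\Lip(F)$ (hence $f\in L^\infty$), whereas the paper's uses only continuity of $U_\alpha$ and $F$.

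For item (3) your route differs from the paper's and does not reach the stated constant. The paper's argument is short: invoke the ROF equivalence (Theorem~\ref{thm:gra07}) and truncate $u_\alpha$ at $\pm\|f\|_\infty$ to get $\Lip(U_\alpha)=\|u_\alpha\|_\infty\le\|f\|_\infty=\Lip(F)$, then declare that \eqref{eq:spacing} follows. You instead stay intrinsic to the taut string, isolating adjacent contact points $a\in C_\alpha^-$, $b\in C_\alpha^+$ with $U_\alpha$ affine of slope $\mu$ on $[a,b]$, and bounding $|\mu|\le\Lip(F)$ from the obstacle geometry on either side. But feeding $|\mu|\le\Lip(F)$ and $|F(b)-F(a)|\le\Lip(F)(b-a)$ into $\mu(b-a)=F(b)-F(a)+2\alpha$ yields only $b-a\ge\alpha/\Lip(F)$; your sentence ``forces $b-a$\ldots to exceed $2\alpha/\Lip(F)$'' is an arithmetic slip. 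The strictness clause (``extremal slope would violate the band just outside $[a,b]$'') is also not a proof: take $f=1$ on $(0,\tfrac14)\cup(\tfrac34,1)$, $f=-1$ on $(\tfrac14,\tfrac34)$ and $\alpha=\tfrac14$; then $U_\alpha\equiv 0$, $C_\alpha^-=\{\tfrac14\}$, $C_\alpha^+=\{\tfrac34\}$, $\mu=0$, and $|x^+-x^-|=\tfrac12=2\alpha/\Lip(F)$ with \emph{equality}, so the stated strict inequality in fact fails. Note that the paper's one-line deduction from $\Lip(U_\alpha)\le\Lip(F)$ has the same gap. Since every downstream use in the paper only needs \emph{some} positive lower bound on the spacing, the weaker estimate $|x^+-x^-|\ge\alpha/\Lip(F)$ that both routes do deliver is enough; the defect is in matching the constant written in the lemma, not in your overall plan.
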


\begin{proof}
\textbf{Step 1.} We prove (1) and (2). Their proof can be found in a slightly more general setting in \cite[Theorem~3.1, Lemma~5.4]{GraObe08}, but we report their argument here for completeness. For that, let $x_0 \in (0,1) \setminus C_\alpha^-$, and denote
\[ t:= U_\alpha(x_0) - F(x_0) + \alpha >0.\]
By continuity of $U_\alpha$ and $F$, there is $\delta >0$ so that 
\begin{equation}\label{eq:contUF}|U_\alpha(x) - U_\alpha(x_0)| < \frac{t}{2}, \ |F(x) - F(x_0)| < \frac{t}{2} \quad \text{for all }x \in (x_0-\delta, x_0+\delta) \subset [0,1].\end{equation} 

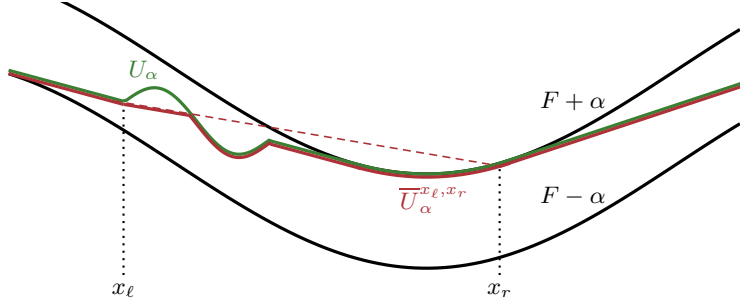
\begin{figure}[!ht]
\begin{tikzpicture}
\begin{axis}[height=0.35\textwidth,width=0.7\textwidth, clip=true,
    axis x line=none,
    axis y line=none,
    xmin=0.55,xmax=0.9,
    ymin=-0.02,ymax=0.18
    ]
\addplot[samples=100, domain=0.55:0.9, black, very thick] {-0.0022-cos(4*pi*deg(x+0.25))/4/pi+sin(pi*deg(x+0.25)/2)/pi-0.202-0.03};
\node[black, font=\small] at (axis cs:0.82,0.052){$F-\alpha$};
\addplot[samples=100, domain=0.55:0.9, black, very thick] {-cos(4*pi*deg(x+0.25))/4/pi+sin(pi*deg(x+0.25)/2)/pi-0.202+0.03};
\node[black, font=\small] at (axis cs:0.82,0.115){$F+\alpha$};
\addplot[samples=100, domain=0.55:0.719, color=OliveGreen, very thick] {0.1374+(x - 0.544)*(0.0723-0.1374)/(0.719-0.544) - (abs(x-0.64)<=0.035)*0.015*sin(29*pi*deg(x-0.64))} node[anchor=south, pos=0.35, font=\small]{$U_\alpha$};
\addplot[samples=20, domain=0.717:0.79, color=OliveGreen, very thick] {-cos(4*pi*deg(x+0.25))/4/pi+sin(pi*deg(x+0.25)/2)/pi-0.202+0.03};
\addplot[samples=2, color=OliveGreen, very thick] coordinates {(0.785,0.0738) (1,0.1718)};
\addplot[samples=2, domain=0.55:0.605, color=Maroon, very thick] {-0.0021+0.1374+(x - 0.544)*(0.0723-0.1374)/(0.719-0.544)};
\addplot[samples=2, domain=0.605:0.637, color=Maroon, very thick] {-0.00115+0.115+(x - 0.6)*(0.072-0.115)/(0.785-0.6)};
\addplot[samples=100, domain=0.6365:0.719, color=Maroon, very thick] {-0.00215+0.1374+(x - 0.544)*(0.0723-0.1374)/(0.719-0.544) - (abs(x-0.64)<=0.035)*0.015*sin(29*pi*deg(x-0.64))};
\addplot[samples=20, domain=0.717:0.79, color=Maroon, very thick] {-0.0021-cos(4*pi*deg(x+0.25))/4/pi+sin(pi*deg(x+0.25)/2)/pi-0.202+0.03}
node[anchor=north, pos=0.5, font=\small]{$\overline U^{x_\ell,x_r}_\alpha$};
\addplot[samples=2, color=Maroon, very thick] coordinates {(0.785,0.0717) (1,0.1697)};
\addplot[samples=2, color=Maroon, densely dashed, semithick] coordinates {(0.6,0.115) (0.785,0.072)};
\draw[black, thick, dotted] (axis cs:0.605, 0.0) -- (axis cs:0.605,  0.11);
\node[black, font=\small] at (axis cs:0.605,-0.01){$x_\ell$};
\draw[black, thick, dotted] (axis cs:0.785, 0.0) -- (axis cs:0.785,  0.073);
\node[black, font=\small] at (axis cs:0.785,-0.01){$x_r$};
\end{axis}
\end{tikzpicture}
\caption{Competitor construction from \cite[Lem.~5.4]{GraObe08}. If $x_\ell, x_r$ are in the same connected component of $(0,1)\setminus C_\alpha^-$ and close enough to each other, $\overline U^{x_\ell,x_r}_\alpha$ is admissible for the taut string problem and minimality of $U_\alpha$ against $\overline U^{x_\ell,x_r}_\alpha$ certifies the convexity inequality $U_\alpha\big((1-\lambda) x_\ell + \lambda x_r\big) \leq (1-\lambda) U_\alpha(x_\ell) + \lambda U_\alpha(x_r)$ for all $\lambda \in (0,1)$.}
\label{fig:convexity_competitor}
\end{figure}

We then fix two points $x_\ell,x_r\in\R$ with $x_0 - \delta < x_\ell < x_r < x_0 + \delta$ and define $\overline U^{x_\ell, x_r}_\alpha:[0,1] \to \R$ as (see Figure \ref{fig:convexity_competitor})
\begin{equation*}
	\begin{aligned}
		\overline U^{x_\ell, x_r}_\alpha (x):=
		\begin{cases}
			U_\alpha(x) & \text{if } x \in [0, x_\ell] \cup [x_r,1],\\			 
			\min\big\{U_\alpha(x), (1-\lambda) U_\alpha(x_\ell) + \lambda U_\alpha(x_r)\big\} & \text{if } x = (1-\lambda) x_\ell + \lambda x_r \text{ with }\lambda \in (0,1).
		\end{cases}
	\end{aligned}
\end{equation*}
We claim that $U_\alpha$ is convex on $(x_0 - \delta, x_0 + \delta)$.
We then conclude that $U_\alpha$ is convex on each connected component of $(0,1) \setminus C_\alpha^-$ as follows. Let $a,b$ be in a connected component of $(0,1) \setminus C_\alpha^-$.
Then, it is possible to find finitely many points $a=y^1<\dots,<y^k=b$ such that $(x^i_\ell,x^i_r)\cap (x^{i+1}_\ell,x^{i+1}_r)\neq\emptyset$, where $x^i_\ell,x^i_r$ are the points constructed above relative to $y^i$. 
For the sake of the argument, we assume $k=2$. The general case can be treated similarly.
Fix $\lambda\in(0,1)$, and let $c:=\lambda a + (1-\lambda)b$.
Then, it is possible to write
\[
c=\mu a + (1-\mu)x^1_r,\quad\quad
x^1_r = \omega x^2_\ell + (1-\omega)b,\quad\quad
x^2_\ell = \xi a + (1-\xi)x^1_r,
\]
for some $\mu,\omega,\xi\in[0,1]$.
This yields that
\[
x^1_r = \frac{\omega\xi}{1-\omega(1-\xi)}a + \frac{1-\omega}{1-\omega(1-\xi)}b.
\]
Therefore, we get
\begin{align*}
f(c) &\leq \mu f(a) + (1-\mu) f(x^1_r) \\
&\leq \mu f(a) + (1-\mu)\frac{\omega\xi}{1-\omega(1-\xi)} f(a)
        + (1-\mu)\frac{1-\omega}{1-\omega(1-\xi)} f(b).
\end{align*}
Noting that
\[
c = [\mu + (1-\mu)\frac{\omega\xi}{1-\omega(1-\xi)}] a + (1-\mu)\frac{1-\omega}{1-\omega(1-\xi)} b,
\]
we obtain the desired conclusion.

We now prove the claim.
Note that convexity of $U_\alpha$ on $(x_0 - \delta, x_0 + \delta)$ is equivalent to
\[
U_\alpha(x) \leq \overline U^{x_\ell, x_r}_\alpha (x)
\]
for all $x \in (x_\ell, x_r)$ and all $x_\ell, x_r \in (x_0-\delta, x_0+\delta)$.
To prove this inequality we assume, on the contrary, that there is some $y \in (x_\ell, x_r)$ with $U_\alpha(y) > \overline U^{x_\ell, x_r}_\alpha (y)$.
Then, again by continuity, there would be $\eta > 0$ for which
\[
U_\alpha(x) > \overline U^{x_\ell, x_r}_\alpha (x)
\]
for all $x \in (y - \eta, y+\eta)$, and
\[
U_\alpha(y - \eta) = \overline U^{x_\ell, x_r}_\alpha (y - \eta),\quad\quad\quad
U_\alpha(y + \eta) = \overline U^{x_\ell, x_r}_\alpha (y + \eta).
\]

We now claim that
\begin{equation}\label{eq:first_ineq}
\mathcal{J}\big[\overline U^{x_\ell, x_r}_\alpha\big] < \mathcal{J}[U_\alpha].
\end{equation}
Indeed, by definition, we have that
\[
\int_{[0,x_\ell]\cup[x_r,1]} \sqrt{1+[(\overline U^{x_\ell, x_r}_\alpha)']^2} \dd x
    = \int_{[0,x_\ell]\cup[x_r,1]} \sqrt{1+(U'_\alpha(x))^2} \dd x.
\]
Moreover, we can write
\begin{align*}
[x_\ell, x_r] &= \{ x = (1-\lambda) x_\ell + \lambda x_r \in [x_\ell, x_r] : U_\alpha(x) \leq (1-\lambda)U_\alpha(x_\ell)
    + \lambda U_\alpha(x_r)\} \\  &\qquad\cup\,
    \{ x = (1-\lambda) x_\ell + \lambda x_r \in [x_\ell, x_r] : U_\alpha(x) > (1-\lambda)U_\alpha(x_\ell)
    + \lambda U_\alpha(x_r)\}
    =:B\cup A,
\end{align*}
namely, as a disjoint union of two measurable sets. The set $A$ is an open set, therefore with at most countably many connected components. Thus, there exists a set of indexes $I$, which is at most countable, such that
\[
A = \bigcup_{i\in I} (a_i,b_i),
\]
for some $a_i<b_i$. Now, note that
\[
(\overline U^{x_\ell, x_r}_\alpha)'(x)=U'_\alpha(x)
\]
for almost every $x\in B$. This gives
\[
\int_B \sqrt{1+[(\overline U^{x_\ell, x_r}_\alpha)']^2} \dd x
    = \int_B \sqrt{1+(U'_\alpha(x))^2} \dd x.
\]
On the other hand, for each $i\in I$, we have that
\[
\int_{(a_i,b_i)} \sqrt{1+[(\overline U^{x_\ell, x_r}_\alpha)']^2} \dd x
    <\int_{(a_i,b_i)} \sqrt{1+(U'_\alpha(x))^2} \dd x,
\]
since $\overline U^{x_\ell, x_r}_\alpha$ is affine on $[a_i,b_i]$, 
$\overline U^{x_\ell, x_r}_\alpha(a_i)=U_\alpha(a_i)$, and $\overline U^{x_\ell, x_r}_\alpha(b_i)=U_\alpha(b_i)$.
Note that the strict inequality comes from the fact that $U_\alpha(x) > (1-\lambda)U_\alpha(x_\ell)
    + \lambda U_\alpha(x_r)$ for all $x\in(a_i,b_i)$.
This proves \eqref{eq:first_ineq}.

On the other hand, the function $\overline U^{x_\ell, x_r}_\alpha$ is admissible for \eqref{eq:minTaut}. Indeed,
\[
\overline U^{x_\ell, x_r}_\alpha \leq U_\alpha \leq F + \alpha,
\]
and using \eqref{eq:contUF} we get
\begin{align*}
& (1-\lambda) U_\alpha(x_\ell) + \lambda U_\alpha(x_r) - F\big((1-\lambda) x_\ell + \lambda x_r\big) + \alpha \\
&\hspace{2cm} > (1-\lambda) \left(U_\alpha(x_0)-\frac{t}{2}\right) + \lambda \left(U_\alpha(x_0)-\frac{t}{2}\right) - F(x_0) - \frac{t}{2} + \alpha \\ 
&\hspace{2cm} = U_\alpha(x_0) - F(x_0) + \alpha - t \geq 0.
\end{align*}
Therefore, the minimality of $U_\alpha$ yields
\[
\mathcal{J}[U_\alpha]\leq \mathcal{J}\big[\overline U^{x_\ell, x_r}_\alpha\big],
\]
giving the desired contradiction.

Analogously, we obtain that $U_\alpha$ is concave on each connected component of $(0,1) \setminus C_\alpha^+$.

\textbf{Step 2.} We prove (3). The estimate \eqref{eq:spacing} follows immediately from the definition of $C_\alpha^\pm$ and the bound 
\[\Lip(U_\alpha) \leq \Lip(F).\]
To prove the latter statement of the result, we notice that by Theorem \ref{thm:gra07}, it is equivalent to $\|u_\alpha\|_\infty \leq \|f\|_\infty$, which in turn is a well known fact that follows by truncation. Indeed, if we consider $\hat u_\alpha := \min(u_\alpha, \|f\|_\infty)$, then we have 
\[\|\hat u_\alpha - f\|_{L^2(0,1)} \leq \|u_\alpha - f\|_{L^2(0,1)} \ \text{ and }\ TV(\hat u_\alpha, (0,1)) \leq TV(u_\alpha, (0,1)),\]
with both inequalities becoming strict if we had $\|\max(u_\alpha, 0)\|_\infty > \|\max(f,0)\|_\infty$. An analogous argument applies for $\max(u_\alpha, -\|f\|_\infty)$, and combining both we get that $\|u_\alpha\|_\infty > \|f\|_\infty$ would contradict minimality of $u_\alpha$ for \eqref{eq:minROF}.
\end{proof}

As a straightforward consequence of the above universal minimality, we obtain that the solutions of the taut string Problem \ref{prob:taut} are continuous in the $L^\infty$ topology with respect to $\alpha$.

\begin{proposition}\label{prop:linfty_stability}
Let \(\{\alpha_n\}_{n\in\N}\subset(0,+\infty)\) be such that $\alpha_n\to \alpha\in(0,+\infty)$. Then,
\begin{equation}\label{eq:convUalpha}
U_{\alpha_n}\to U_\alpha
\end{equation}
with respect to the uniform convergence.
\end{proposition}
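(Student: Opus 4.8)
The plan is to combine a uniform a priori bound with a compactness plus lower-semicontinuity argument in the spirit of $\Gamma$-convergence; the universal minimality of Theorem~\ref{thm:uni-min} gives the flexibility to pick a convenient strictly convex integrand, although one can also argue directly with $\mathcal{J}$. The starting point is that, as established in the proof of Lemma~\ref{lem:geomtaut}, $\Lip(U_\beta)\leq\Lip(F)=\|f\|_{L^\infty(0,1)}$ for every $\beta\in(0,+\infty)$; this bound is uniform in $\beta$, and together with $U_\beta(0)=0$ it shows that $\{U_{\alpha_n}\}_{n\in\N}$ is equibounded (by $\|f\|_{L^\infty(0,1)}$) and equi-Lipschitz on $[0,1]$. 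By the Arzel\`a--Ascoli theorem it is precompact in the uniform topology, so it suffices to prove that every subsequential uniform limit of $\{U_{\alpha_n}\}$ equals $U_\alpha$; the convergence of the whole sequence to $U_\alpha$ then follows. Below I phrase the argument with the taut string functional $\mathcal{J}$; alternatively, by Theorem~\ref{thm:uni-min} one may replace $\mathcal{J}$ by $\mathcal{G}[v]:=\int_0^1 (v'(x))^2\dd x$, of which $U_\beta$ is likewise the unique minimizer over $\mathcal{A}_\beta$, at which point all lower-semicontinuity and continuity claims become standard Hilbert-space facts.

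Let $U_{\alpha_{n_k}}\to V$ uniformly on $[0,1]$. Then $V$ inherits the Lipschitz bound, so $V\in W^{1,\infty}(0,1)\subset AC(0,1)$; moreover, since $\{U_{\alpha_{n_k}}'\}$ is bounded in $L^\infty(0,1)$ and $U_{\alpha_{n_k}}\to V$ in $L^1(0,1)$, testing the derivatives against $C^\infty_c(0,1)$ identifies their unique weak$^\ast$ accumulation point as $V'$, so $U_{\alpha_{n_k}}'\weaklystar V'$ in $L^\infty(0,1)$. Next I check $V\in\mathcal{A}_\alpha$: the boundary conditions $V(0)=0$ and $V(1)=F(1)$ pass to the limit pointwise, while $\|V-F\|_{L^\infty(0,1)}\leq\|V-U_{\alpha_{n_k}}\|_{L^\infty(0,1)}+\|U_{\alpha_{n_k}}-F\|_{L^\infty(0,1)}\leq\|V-U_{\alpha_{n_k}}\|_{L^\infty(0,1)}+\alpha_{n_k}\to\alpha$. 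Finally, convexity of $t\mapsto\sqrt{1+t^2}$ together with $U_{\alpha_{n_k}}'\weaklystar V'$ yields the lower bound $\mathcal{J}[V]\leq\liminf_{k\to\infty}\mathcal{J}[U_{\alpha_{n_k}}]$.

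It remains to show $V$ is optimal, that is $\mathcal{J}[V]\leq\mathcal{J}[w]$ for every $w\in\mathcal{A}_\alpha$, which I do with a recovery sequence. Fix $w\in\mathcal{A}_\alpha$, set $\theta_k:=\min\{1,\alpha_{n_k}/\alpha\}$ (well defined because $\alpha>0$, and $\theta_k\to1$ since $\alpha_{n_k}\to\alpha$), and define $w_k:=(1-\theta_k)F+\theta_k w\in AC(0,1)$. Then $w_k(0)=0$, $w_k(1)=F(1)$, and $\|w_k-F\|_{L^\infty(0,1)}=\theta_k\|w-F\|_{L^\infty(0,1)}\leq\theta_k\alpha=\min\{\alpha,\alpha_{n_k}\}\leq\alpha_{n_k}$, so $w_k\in\mathcal{A}_{\alpha_{n_k}}$; and since $w_k'-w'=(1-\theta_k)(F'-w')=(1-\theta_k)(f-w')\to0$ in $L^1(0,1)$ and, by the elementary bound $|\sqrt{1+a^2}-\sqrt{1+b^2}|\leq|a-b|$, we get $|\mathcal{J}[w_k]-\mathcal{J}[w]|\leq\|w_k'-w'\|_{L^1(0,1)}\to0$. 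By minimality of $U_{\alpha_{n_k}}$ over $\mathcal{A}_{\alpha_{n_k}}$ we have $\mathcal{J}[U_{\alpha_{n_k}}]\leq\mathcal{J}[w_k]$, hence $\mathcal{J}[V]\leq\liminf_k\mathcal{J}[U_{\alpha_{n_k}}]\leq\lim_k\mathcal{J}[w_k]=\mathcal{J}[w]$. Since $w\in\mathcal{A}_\alpha$ is arbitrary and $V\in\mathcal{A}_\alpha$, $V$ solves Problem~\ref{prob:taut} for the parameter $\alpha$, so $V=U_\alpha$ by uniqueness, which gives \eqref{eq:convUalpha}.

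The part requiring the most care is the recovery sequence in the regime $\alpha_{n_k}<\alpha$, where $w$ itself need not be admissible for $\mathcal{A}_{\alpha_{n_k}}$: contracting $w$ linearly towards $F$ restores admissibility while preserving the boundary values, and this is exactly where the hypothesis $\alpha\in(0,+\infty)$ is used (it forces $\alpha_{n_k}/\alpha\to1$, so the perturbation is asymptotically negligible in the energy). When $\alpha_{n_k}\geq\alpha$ one simply has $\theta_k=1$ and $w_k=w$, so the displayed formula handles both regimes at once; the rest is a routine Arzel\`a--Ascoli argument combined with the lower semicontinuity of convex integral functionals under weak$^\ast$ convergence.
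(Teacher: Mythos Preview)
Your proof is correct and follows essentially the same $\Gamma$-convergence-style scheme as the paper: compactness, lower semicontinuity, and a recovery sequence obtained by linearly contracting competitors towards $F$. The only cosmetic differences are that the paper switches to the quadratic integrand $c(t)=t^2$ via Theorem~\ref{thm:uni-min} and obtains compactness from a $W^{1,2}$ bound rather than from the Lipschitz estimate of Lemma~\ref{lem:geomtaut}, while your recovery map $w_k=(1-\theta_k)F+\theta_k w$ coincides with the paper's $v_n=\tfrac{\alpha_n}{\alpha}(v-F)+F$ up to the harmless truncation $\theta_k=\min\{1,\alpha_{n_k}/\alpha\}$.
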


\begin{proof}
In order to prove that $U_{\alpha_n}$ converges to $U_\alpha$ with respect to the uniform convergence, we argue as follows.
For $t\geq 0$, define $\mathcal{F}_t: W^{1,2}(0,1)\to\R\cup\{+\infty\}$ as
\[
\mathcal{F}_t(v):=
\left\{
\begin{array}{ll}
\displaystyle \int_0^1 |v'|^2 \dd x & \text{ if } \Vert v- F\Vert_\infty \leq t,\  v(0)=F(0)=0, \ v(1)=F(1),\\
& \\
+\infty & \text{ else.}
\end{array}
\right.
\]
We claim that:
\begin{itemize}
\item[(i)] The sequence $\{\mathcal{F}_{\alpha_n}\}_{n\in\N}$ $\Gamma$-converges to $\mathcal{F}_\alpha$ as $n\to\infty$, with respect to the uniform convergence;
\item[(ii)] If $\{w_n\}_{n\in\N}\subset W^{1,2}(0,1)$ is such that
\[
\sup_{n\in\N}\mathcal{F}_{\alpha_n}(w_n)<\infty,
\]
then there exists a subsequence $\{w_{n_k}\}_{k\in\N}$ such that $w_{n_k}\to v$ uniformly, and $\mathcal{F}_\alpha(v)<\infty$;
\item[(iii)] It holds that
\[
\sup_{n\in\N} \mathcal{F}_{\alpha_n}(U_{\alpha_n}) < \infty.
\]
\end{itemize}
This will allow us to conclude \eqref{eq:convUalpha}. Indeed, by using standard properties of $\Gamma$-convergence \cite[Corollary~7.20]{Dal93} together with (iii) and (ii), we obtain that $U_{\alpha_n}\to v$ uniformly, where $v$ is a minimizer of $\mathcal{F}_\alpha$. By strict convexity of the energy, together with Theorem~\ref{thm:uni-min}, we know that $U_\alpha$ 
is the unique solution to the constrained minimization problem
\[
\begin{aligned}
\min \bigg\{ \int_0^1 |v'|^2 \dd x \colon v\in W^{1,2}(0,1),\, \Vert v- F\Vert_\infty \leq {\alpha_n},\  v(0)=F(0)=0, \ v(1)=F(1) \bigg\}.
\end{aligned}
\]
Thus, $U_{\alpha_n}\to U_\alpha$ uniformly, as desired.

We are thus left to prove (i), (ii), and (iii). We start with (i). We will show that
\begin{itemize}
\item[(a)] Liminf inequality: Let $\{v_n\}_{n\in\N}$ be such that $v_n\to v$ uniformly. Then,
\[
\liminf_{n\to\infty} \mathcal{F}_{\alpha_n}(v_n)\geq \mathcal{F}_\alpha(v);
\]
\item[(b)] Limsup inequality: Let $v\in W^{1,2}(0,1)$. Then, there exists $\{v_n\}_{n\in\N}$ such that $v_n\to v$ uniformly, and
\[
\limsup_{n\to\infty} \mathcal{F}_{\alpha_n}(v_n)\leq \mathcal{F}_\alpha(v).
\]
\end{itemize}
We first prove (a). Without loss of generality, we can assume that 
\[
\liminf_{n\to\infty} \mathcal{F}_{\alpha_n}(v_n)= \lim_{n\to\infty} \mathcal{F}_{\alpha_n}(v_n) < \infty \quad \hbox{ and } \quad \sup_{n\in\NN}  \mathcal{F}_{\alpha_n}(v_n) < \infty.
\]
This implies in particular that
\[
v_n(0)=F(0), \quad\quad v_n(1)=F(1),\quad\quad \Vert v_n - F\Vert_\infty \leq \alpha_n,
\]
for all $n\in\N$.
By uniform convergence, by sending $n\to\infty$, we get that
\begin{equation}\label{eq:cond_v}
v(0)=F(0), \quad\quad v(1)=F(1),\quad\quad \Vert v - F\Vert_\infty \leq \alpha.
\end{equation}
To prove the desired inequality, we argue as follows. The sequence $\{v_n\}_{n\in\N}$ is bounded in $W^{1,2}(0,1)$. Therefore, up to a not relabeled subsequence, it holds that $v_n\rightharpoonup u$ weakly in $W^{1,2}(0,1)$. Since we already know that $v_n\to v$ uniformly, we get that $v_n\rightharpoonup v$ weakly in $W^{1,2}(0,1)$. Thus, the liminf inequality follows from the lower semi-continuity of the norm with respect to weak convergence.

We now prove (b). Let $v\in W^{1,2}(0,1)$. Without loss of generality, we can assume that $\mathcal{F}_\alpha(v)<\infty$, otherwise there is nothing to prove.
For $n\in\N$, define
\[
v_n := \frac{\alpha_n}{\alpha}(v-F) + F.
\]
Note that, for each $n\in\N$, $v_n\in L^\infty(0,1)$, because $W^{1,2}(0,1) \subset L^\infty(0,1)$. Moreover, using \eqref{eq:cond_v}, it follows that 
\[
\Vert v_n - F\Vert_\infty = \frac{\alpha_n}{\alpha}\Vert v - F\Vert_\infty \leq \alpha_n,\quad\quad v_n(1) =F(1),\quad\quad v_n(0) =F(0).
\]
Also, 
\[
\Vert v_n - v\Vert_\infty \leq \left| \frac{\alpha_n}{\alpha}-1 \right| \Vert v - F\Vert_\infty
\]
and, recalling that $F'=f$  $\mathcal{L}^1$-a.e.~in $(0,1)$,
\[\Vert v'_n\Vert_{L^2(0,1)} \leq \frac{\alpha_n}{\alpha}\Vert v'\Vert_{L^2(0,1)} +\left| \frac{\alpha_n}{\alpha}-1 \right| \Vert f\Vert_{L^2(0,1)}. \]
Passing to the limit as $n\to\infty$ the two preceding estimates, the convergence $\alpha_n\to\alpha$ yields
\[
\lim_{n\to\infty} \Vert v_n - v\Vert_\infty = 0 \quad \hbox{ and } \quad \limsup_{n\to\infty} \int_0^1 |v'_n|^2 \dd x \leq \int_0^1 |v'|^2 \dd x,
\]
which concludes the proof of (b).

We now prove (iii). By Theorem~\ref{thm:gra07}, we get that
\[
\mathcal{F}_{\alpha_n}(U_{\alpha_n})
= \Vert U'_{\alpha_n}\Vert_{L^2(0,1)}
= \Vert u_{\alpha_n}\Vert_{L^2(0,1)}
\leq C\Vert u_{\alpha_n}\Vert_{BV(0,1)}.
\]
The latter are uniformly bounded by definition of the ROF problem, since $\sup_n \alpha_n<\infty$, and this concludes the proof.
\end{proof}


\section{Proof of the Monotonicity, Theorem \ref{thm:main}}\label{sect:proof_main}

We start by proving necessary optimality conditions at the contact sets in terms of left and right derivatives of $F$ and of $U$, that will be used throughout our analysis.

\begin{remark}\label{rmk:limU'}
    Assume that \(f\in L^\infty(0,1) \), fix   
\(\alpha\in(0,+\infty)\),  and let  \(U_\alpha\in \Acal_\alpha\)  be the
unique
solution to Problem~\ref{prob:taut}.   Because \(U_\alpha\in AC(0,1)\), we have for all $x_0 \in (0,1)$ that
\begin{equation*}
	\begin{aligned}
		\frac{U_\alpha(x) - U_\alpha(x_0)}{x-x_0} = \aint_x^{x_0} U'_\alpha(t)\dd t.
	\end{aligned}
\end{equation*}
Consequently,  we conclude  from the condition \(U'_\alpha\in BV(0,1)\) by Theorem~\ref{thm:gra07} that
the limits
\begin{equation}\label{eq:Urlderivatives}
    \begin{aligned}
        U'^{,+}_\alpha(x_0):=\lim_{x\to x_0^+} \frac{U_\alpha(x) - U_\alpha(x_0)}{x-x_0} \quad \hbox{and} \quad U'^{,-}_\alpha(x_0):=\lim_{x\to x_0^-} \frac{U_\alpha(x) - U_\alpha(x_0)}{x-x_0} 
    \end{aligned}
\end{equation}
exist  for all $x_0 \in (0,1)$, with 
\(U'^{,+}_\alpha(x_0) = U'^{,-}_\alpha(x_0)\) for \(\Lcal^1\)-a.e.~\(x_0\in(0,1)\).
\end{remark}

\begin{figure}[!ht]
\begin{tikzpicture}
\begin{axis}[height=0.4\textwidth,width=0.5\textwidth, clip=true,
    axis x line=none,
    axis y line=none,
    xmin=0.2,xmax=1,
    ymin=-0.075,ymax=0.45
    ]
\addplot[samples=20, domain=0.35:0.5, black, very thick] {4*x*x*x*x+0.2*(x-0.5)} ;
\addplot[samples=20, domain=0.5:1, black, very thick] {0.25-(x-0.55)*(x-0.55)+0.5*(x-0.5)} 
node[anchor=north, pos=0.6, font=\small]{$F\!-\!\alpha$};
\addplot[samples=2, color=OliveGreen, very thick] coordinates {(0.3,0.025) (0.505,0.2575)};
\addplot[samples=2, color=OliveGreen, densely dashed, semithick] coordinates {(0.505,0.2575) (0.605,0.370)}
node[anchor=south, black, pos=1, font=\small]{$U'^{,-}_\alpha(x_0)\leq f^-(x_0)$};
\addplot[samples=2, color=OliveGreen, densely dashed, semithick] coordinates {(0.355,0.1675) (0.505,0.2575)}
node[anchor=south east, pos=0.1, font=\small]{$U'^{,+}_\alpha(x_0)$};
\addplot[samples=20, domain=0.5:1, color=OliveGreen, very thick] {0.2575-(x-0.55)*(x-0.55)+0.5*(x-0.5)};
\node[color=OliveGreen, font=\small] at (axis cs:0.25,0.08){$U_\alpha$};
\node[black, font=\small] at (axis cs:0.75,-0.05){(DL)};
\draw[black, thick, dotted] (axis cs:0.5, 0.025) -- (axis cs:0.5,  0.25);
\node[black, font=\small] at (axis cs:0.5,0.0){$x_0$};
\end{axis}
\end{tikzpicture}
\hspace{1cm}
\begin{tikzpicture}
\begin{axis}[height=0.4\textwidth,width=0.5\textwidth, clip=true,
    axis x line=none,
    axis y line=none,
    xmin=0.0,xmax=0.8,
    ymin=-0.075,ymax=0.45
    ]
\addplot[samples=20, domain=0.5:0.65, black, very thick] {4*(1-x)*(1-x)*(1-x)*(1-x)+0.2*((1-x)-0.5)} ;
\addplot[samples=20, domain=0:0.5, black, very thick] {0.25-((1-x)-0.55)*((1-x)-0.55)+0.5*((1-x)-0.5)} 
node[anchor=north, pos=0.4, font=\small]{$F\!-\!\alpha$};
\addplot[samples=2, color=OliveGreen, very thick] coordinates {(0.7,0.025) (0.495,0.2575)};
\addplot[samples=2, color=OliveGreen, densely dashed, semithick] coordinates {(0.495,0.2575) (0.395,0.370)}
node[anchor=south, black, pos=1, font=\small]{$U'^{,+}_\alpha(x_0)\geq f^+(x_0)$};
\addplot[samples=2, color=OliveGreen, densely dashed, semithick] coordinates {(0.645,0.1675) (0.495,0.2575)}
node[anchor=south west, pos=0.1, font=\small]{$U'^{,-}_\alpha(x_0)$};
\addplot[samples=20, domain=0:0.5, color=OliveGreen, very thick] {0.2575-((1-x)-0.55)*((1-x)-0.55)+0.5*((1-x)-0.5)};
\node[color=OliveGreen, font=\small] at (axis cs:0.75,0.08){$U_\alpha$};
\node[black, font=\small] at (axis cs:0.25,-0.05){(DR)};
\draw[black, thick, dotted] (axis cs:0.5, 0.025) -- (axis cs:0.5,  0.25);
\node[black, font=\small] at (axis cs:0.5,0.0){$x_0$};
\end{axis}
\end{tikzpicture}
\\
\begin{tikzpicture}
\begin{axis}[height=0.35\textwidth,width=0.49\textwidth, clip=true,
    axis x line=none,
    axis y line=none,
    xmin=0.2,xmax=1,
    ymin=-0.075,ymax=0.35
    ]
\addplot[samples=20, domain=0.35:0.5, black, very thick] {0.35-(4*x*x*x*x+0.2*(x-0.5))} ;
\addplot[samples=20, domain=0.5:1, black, very thick] {0.35-(0.25-(x-0.55)*(x-0.55)+0.5*(x-0.5))}
node[anchor=south, pos=0.6, font=\small]{$F\!+\!\alpha$};
\addplot[samples=2, color=OliveGreen, very thick] coordinates {(0.3,0.325) (0.505,0.0925)};
\addplot[samples=2, color=OliveGreen, densely dashed, semithick] coordinates {(0.505,0.0925) (0.605,-0.020)};
\addplot[samples=2, color=OliveGreen, densely dashed, semithick] coordinates {(0.355,0.1825) (0.505,0.0925)};
\addplot[samples=20, domain=0.5:1, color=OliveGreen, very thick] {0.35-(0.2575-(x-0.55)*(x-0.55)+0.5*(x-0.5))};
\node[color=OliveGreen, font=\small] at (axis cs:0.3,0.25){$U_\alpha$};
\node[black, font=\small] at (axis cs:0.75,-0.05){(UL)};
\draw[black, thick, dotted] (axis cs:0.5, 0.025) -- (axis cs:0.5,  0.1);
\node[black, font=\small] at (axis cs:0.5,0.0){$x_0$};
\end{axis}
\end{tikzpicture}
\hspace{1.5cm}
\begin{tikzpicture}
\begin{axis}[height=0.35\textwidth,width=0.49\textwidth, clip=true,
    axis x line=none,
    axis y line=none,
    xmin=0.0,xmax=0.8,
    ymin=-0.075,ymax=0.35
    ]
\addplot[samples=20, domain=0.5:0.65, black, very thick] {0.35-(4*(1-x)*(1-x)*(1-x)*(1-x)+0.2*((1-x)-0.5))} ;
\addplot[samples=20, domain=0:0.5, black, very thick] {0.35-(0.25-((1-x)-0.55)*((1-x)-0.55)+0.5*((1-x)-0.5))}
node[anchor=south, pos=0.4, font=\small]{$F\!+\!\alpha$};
\addplot[samples=2, color=OliveGreen, very thick] coordinates {(0.7,0.325) (0.495,0.0925)};
\addplot[samples=2, color=OliveGreen, densely dashed, semithick] coordinates {(0.495,0.0925) (0.395,-0.020)};
\addplot[samples=2, color=OliveGreen, densely dashed, semithick] coordinates {(0.645,0.1825) (0.495,0.0925)};
\addplot[samples=20, domain=0:0.5, color=OliveGreen, very thick] {0.35-(0.2575-((1-x)-0.55)*((1-x)-0.55)+0.5*((1-x)-0.5))};
\node[color=OliveGreen, font=\small] at (axis cs:0.71,0.25){$U_\alpha$};
\node[black, font=\small] at (axis cs:0.25,-0.05){(UR)};
\draw[black, thick, dotted] (axis cs:0.5, 0.025) -- (axis cs:0.5,  0.1);
\node[black, font=\small] at (axis cs:0.5,0.0){$x_0$};
\end{axis}
\end{tikzpicture}
\caption{The different cases of the result of Theorem \ref{thm:optcond}.}
\label{fig:optcond}
\end{figure}
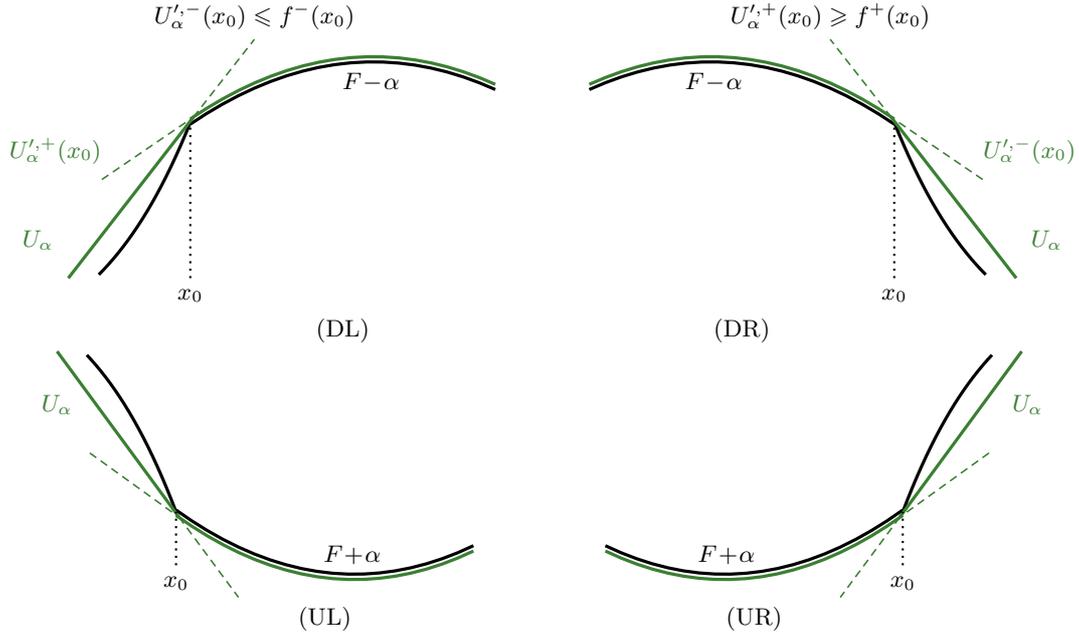

\begin{theorem}\label{thm:optcond}
Let \(f\in L^\infty(0,1) \) be such that the approximate left and right limits in \eqref{eq:F'lim} exist for all $x_0 \in (0,1)$. Fix   
\(\alpha\in(0,+\infty)\),  and let  
\(U_\alpha\in \Acal_\alpha\)  be the
unique
solution of  Problem~\ref{prob:taut}, and let $C^\pm_\alpha$ be the sets in \eqref{eq:contactsets}.
    For $x_0 \in (0,1)$, let \(U'^{,+}_\alpha(x_0)\) and \(U'^{,-}_\alpha(x_0)\) be given by \eqref{eq:Urlderivatives}. Then, the  following statements (illustrated in Figure \ref{fig:optcond}) hold for any $x_0 \in (0,1)$:
\begin{itemize}
\item[(DL)] If $x_0\in C^-_\alpha$ and there exist $\delta>0$ such that $(x_0-\delta,x_0)\subset (0,1)\setminus(C^-_\alpha \cup C^+_\alpha)$, then
\[
f^+(x_0)\leq U'^{,+}_\alpha(x_0) \leq U'^{,-}_\alpha(x_0)\leq f^-(x_0);
\]
\item[(DR)] If $x_0\in C^-_\alpha$ and there exists $\delta>0$ such that $(x_0,x_0+\delta)\subset (0,1)\setminus(C^-_\alpha \cup C^+_\alpha)$, then
\[
f^+(x_0) \leq U'^{,+}_\alpha(x_0) \leq U'^{,-}_\alpha(x_0)\leq f^-(x_0);
\]

\item[(UL)] If $x_0\in C^+_\alpha$ and there exist $\delta>0$ such that $(x_0-\delta,x_0)\subset (0,1)\setminus(C^-_\alpha \cup C^+_\alpha)$, then
\[
f^-(x_0) \leq U'^{,-}_\alpha(x_0) \leq U'^{,+}_\alpha(x_0)\leq f^+(x_0);
\]

\item[(UR)] If $x_0\in C^-_\alpha$ and there exists $\delta>0$ such that $(x_0,x_0+\delta)\subset (0,1)\setminus(C^-_\alpha \cup C^+_\alpha)$, then
\[
f^-(x_0)\leq U'^{,-}_\alpha(x_0) \leq U'^{,+}_\alpha(x_0) \leq f^+(x_0).
\]
\end{itemize}
\end{theorem}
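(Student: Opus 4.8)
The plan is to deduce each of the four chains from two ingredients that require no new construction: the admissibility $\|U_\alpha-F\|_\infty\leq\alpha$ of the taut string minimizer, and the local convexity/concavity of $U_\alpha$ recorded in Lemma~\ref{lem:geomtaut}. As a preliminary I would note that, under \eqref{eq:F'lim}, the Lipschitz primitive $F$ has one-sided \emph{classical} derivatives at every point, with
\[
\lim_{x\to x_0^+}\frac{F(x)-F(x_0)}{x-x_0}=\lim_{x\to x_0^+}\aint_{x_0}^{x} f(t)\dd t=f^+(x_0)
\qquad\text{and}\qquad
\lim_{x\to x_0^-}\frac{F(x)-F(x_0)}{x-x_0}=f^-(x_0),
\]
which follows at once from the fundamental theorem of calculus and the definition of $f^\pm$. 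Combined with Remark~\ref{rmk:limU'}, this guarantees that $U'^{,\pm}_\alpha(x_0)$ exist and that the one-sided difference quotients of $g:=U_\alpha-F+\alpha$ and of $h:=F+\alpha-U_\alpha$ converge at every $x_0\in(0,1)$.

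First I would handle the two outer inequalities in cases (DL) and (DR). Since $x_0\in C_\alpha^-$ forces $g(x_0)=0$ while admissibility gives $g\geq 0$ on $(0,1)$, we have $g(x)/(x-x_0)\leq 0$ for $x<x_0$ and $g(x)/(x-x_0)\geq 0$ for $x>x_0$; letting $x$ tend to $x_0$ from the left, respectively from the right, yields $g'^{,-}(x_0)\leq 0\leq g'^{,+}(x_0)$, that is $U'^{,-}_\alpha(x_0)\leq f^-(x_0)$ and $f^+(x_0)\leq U'^{,+}_\alpha(x_0)$. Cases (UL) and (UR) are the same argument applied to $h\geq 0$, which vanishes at $x_0\in C_\alpha^+$. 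For the middle inequality I would use that $C_\alpha^+$ is closed, since $U_\alpha-F$ is continuous, so a point $x_0\in C_\alpha^-$, not lying in the disjoint set $C_\alpha^+$, is interior to a connected component of $(0,1)\setminus C_\alpha^+$, on which the concavity statement of Lemma~\ref{lem:geomtaut} applies; concavity on an open interval around $x_0$ then gives $U'^{,+}_\alpha(x_0)\leq U'^{,-}_\alpha(x_0)$. Symmetrically, $x_0\in C_\alpha^+$ is interior to a component of $(0,1)\setminus C_\alpha^-$, where $U_\alpha$ is convex, so $U'^{,-}_\alpha(x_0)\leq U'^{,+}_\alpha(x_0)$. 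Concatenating the three one-sided inequalities produces exactly the asserted chains.

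The one step that genuinely needs care is the identification $F'^{,\pm}(x_0)=f^\pm(x_0)$: this is precisely where the hypothesis \eqref{eq:F'lim}, rather than mere boundedness of $f$, is used, and it is what makes the statement meaningful for inputs beyond $BV$. Everything else is soft, and in particular the hypotheses that $U_\alpha$ be free on one side of $x_0$ are not actually needed for the argument above. A more self-contained route, which does exploit those free-interval hypotheses, would argue by contradiction against the minimality of $U_\alpha$: on the affine portion of $U_\alpha$ adjacent to $x_0$ one inserts a small corner, admissible because the \emph{opposite} obstacle has strict slack near $x_0$, and uses the quantitative strict convexity of $t\mapsto\sqrt{1+t^2}$ to turn a violated slope inequality into a strict decrease of $\mathcal{J}$. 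Setting up that quantitative competitor estimate cleanly is the part I would expect to be the main obstacle along that line.
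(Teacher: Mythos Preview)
Your proof is correct and takes a genuinely simpler route than the paper's. Both agree on the two outer inequalities: the paper, like you, uses that $U_\alpha - (F-\alpha)$ is nonnegative and vanishes at $x_0\in C_\alpha^-$ to obtain $U'^{,-}_\alpha(x_0)\leq f^-(x_0)$ and $f^+(x_0)\leq U'^{,+}_\alpha(x_0)$ from the sign of the difference quotients. The divergence is in the middle inequality. The paper argues by contradiction, assuming $\beta^-:=U'^{,-}_\alpha(x_0)<U'^{,+}_\alpha(x_0)=:\beta^+$, and builds an explicit piecewise-affine competitor $\widetilde U_\alpha$ on a small two-sided interval around $x_0$; admissibility and a strict energy drop are checked using the quantitative strict convexity of $t\mapsto\sqrt{1+t^2}$, with separate treatment of the cases $\beta^+>0$ and $\beta^+\leq 0$. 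This is exactly the ``self-contained route'' you describe in your last paragraph. You instead observe that $x_0\in C_\alpha^-$ is automatically interior to a connected component of the open set $(0,1)\setminus C_\alpha^+$, so the concavity already recorded in Lemma~\ref{lem:geomtaut}(2) applies on an open neighbourhood of $x_0$ and yields $U'^{,+}_\alpha(x_0)\leq U'^{,-}_\alpha(x_0)$ directly.

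Your approach buys a much shorter proof and, as you note, shows that the one-sided free-interval hypotheses are not needed for the stated chains. The paper's competitor construction does not gain independence from Lemma~\ref{lem:geomtaut} either, since it explicitly invokes that lemma to get affinity on $(x_0-\delta,x_0]$ and concavity on $[x_0,x_0+2\alpha/\Lip(F))$ before building $\widetilde U_\alpha$. The residual value of the paper's route is methodological: the explicit competitor, with its quantitative convexity estimate \eqref{eq:strictconv}, is a template that might survive in settings where the convexity/concavity structure of Lemma~\ref{lem:geomtaut} is unavailable, such as the vector-valued generalizations the authors flag in the introduction as a motivation for purely variational arguments.
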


\begin{proof}
We treat each case separately.

\textbf{Step 1.}
We  address the  (DL) case.
We first prove that $U_\alpha'^{,-}(x_0) \leq f^-(x_0)$.  
By definition of $C^-_\alpha$ and  of $\Acal_\alpha$, we get that $U_\alpha(x_0) = F(x_0) -\alpha$ and  $U_\alpha\geq F -\alpha$, respectively. Hence,
\begin{equation*}
\begin{aligned}
U_\alpha'^{,-}(x_0)= \lim_{x\to x_0^- \ } \frac{U_\alpha(x) - U_\alpha(x_0)}{x - x_0} \leq \lim_{x\to x_0^-  } \frac{F(x) - F(x_0)}{x - x_0} = f^-(x_0),
\end{aligned}
\end{equation*}
as desired.

The proof of $f^+(x_0)\leq U_\alpha'^{,+}(x_0)$ follows by using a similar argument.

We now show that $U'^{,-}_\alpha(x_0) \geq U'^{,+}_\alpha(x_0)$. For the sake of notation, we write
\[
\beta^- :=  U_\alpha'^{,-}(x_0), \quad \beta^+ :=  U_\alpha'^{,+}(x_0),
\]
and, by  contradiction, let us assume  that 
$\beta^- < \beta^+$.  
We construct a competitor $\widetilde{U}_\alpha$ by replacing part of $U_\alpha$ with a piecewise function, whose precise definition will depend on whether $\beta^+=0$ or not.

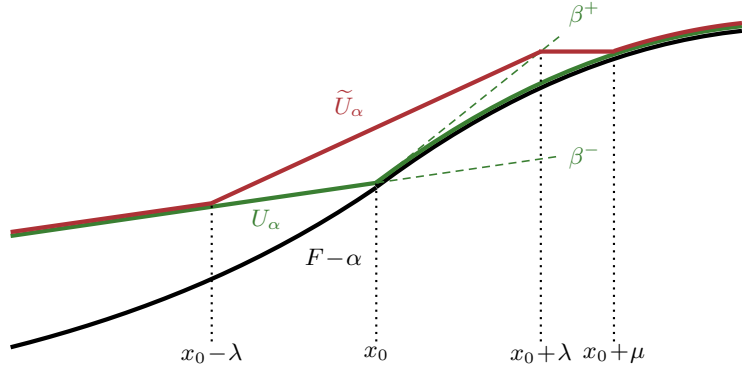
\begin{figure}[!ht]
\begin{tikzpicture}
\begin{axis}[height=0.4\textwidth,width=0.7\textwidth, clip=true,
    axis x line=none,
    axis y line=none,
    xmin=0.4,xmax=0.6,
    ymin=-0.015,ymax=0.45
    ]
\addplot[samples=20, domain=0.4:0.5, black, ultra thick] {16*x*x*x*x*x*x+0.2*(x-0.5)-0.038}
node[anchor=north west, pos=0.7, font=\small]{$F\!-\!\alpha$};
\addplot[samples=20, domain=0.5:0.6, black, ultra thick] {0.25-15*(x-0.55)*(x-0.55)+2*(x-0.5)};
\addplot[samples=2, color=OliveGreen, ultra thick] coordinates {(0.4,0.15) (0.5,0.218)}
node[anchor=north, pos=0.7, font=\small]{$U_\alpha$};
\addplot[samples=2, color=OliveGreen, densely dashed, semithick] coordinates {(0.5,0.218) (0.55,0.405)}
node[anchor=south west, pos=1, font=\small]{$\beta^+$};
\addplot[samples=2, color=OliveGreen, densely dashed, semithick] coordinates {(0.5,0.218) (0.55,0.252)}
node[anchor=west, pos=1, font=\small]{$\beta^-$};
\addplot[samples=20, domain=0.5:0.6, color=OliveGreen, ultra thick] {0.256-15*(x-0.55)*(x-0.55)+2*(x-0.5)};
\addplot[samples=2, color=Maroon, ultra thick] coordinates {(0.4,0.155) (0.4552,0.192)};
\addplot[samples=2, color=Maroon, ultra thick] coordinates {(0.455,0.192) (0.545,0.386)}
node[anchor=south east, pos=0.5, font=\small]{$\widetilde U_\alpha$};
\addplot[samples=2, color=Maroon, ultra thick] coordinates {(0.5445,0.386) (0.5652,0.386)};
\addplot[samples=20, domain=0.565:0.6, color=Maroon, ultra thick] {0.260-15*(x-0.55)*(x-0.55)+2*(x-0.5)};
\draw[black, thick, dotted] (axis cs:0.5, 0.015) -- (axis cs:0.5,  0.218);
\node[black, font=\small] at (axis cs:0.5,0.0){$x_0$};
\draw[black, thick, dotted] (axis cs:0.545, 0.015) -- (axis cs:0.545,  0.38);
\node[black, font=\small] at (axis cs:0.545,0.0){$x_0\!+\!\lambda$};
\draw[black, thick, dotted] (axis cs:0.565, 0.015) -- (axis cs:0.565,  0.38);
\node[black, font=\small] at (axis cs:0.565,0.0){$x_0\!+\!\mu$};
\draw[black, thick, dotted] (axis cs:0.455, 0.015) -- (axis cs:0.455,  0.19);
\node[black, font=\small] at (axis cs:0.455,0.0){$x_0\!-\!\lambda$};
\end{axis}
\end{tikzpicture}
\caption{Construction of the competitor $\widetilde U_\alpha$ for $\beta^+>0$, exploiting the inequality $\beta^- < \beta^+$ to obtain $\mathcal{J}[\widetilde U_\alpha] < \mathcal{J}[U_\alpha]$, a contradiction.}
\label{fig:competitorconstruction}
\end{figure}

We first consider the  $\beta^+>0$ case.
Let $0 < \mu < 2\alpha/\Lip(F)$ and $0<\lambda<\delta$, with $\lambda \leq \mu$. We define $\widetilde{U}_\alpha:[0,1]\to\R$ as (see Figure \ref{fig:competitorconstruction})
\begin{equation*}
	\begin{aligned}
		\widetilde U_\alpha (x):=
		\begin{cases}
			U_\alpha(x) & \text{if } x < x_0 - \lambda,\\
			U_\alpha(x_0 - \lambda) + \frac{1}{2} (\beta^+ + \beta^-)\big(x-(x_0-\lambda)\big)  & \text{if } x_0-\lambda \leq x < x_0 + \lambda, \\
			U_\alpha(x_0 - \lambda) +(\beta^+ + \beta^-)\lambda & \text{if } x_0 + \lambda \leq x \leq x_0 + \mu, \\
			U_\alpha(x) & \text{if } x > x_0 + \mu.
		\end{cases}
	\end{aligned}
\end{equation*}
We claim that it is possible to choose $\lambda$ and $\mu$ such that the following conditions hold:
\begin{itemize}
	\item[(i)] $U_\alpha\in AC(0,1)$;
	\item[(ii)] $U_\alpha$ is an admissible competitor for the minimization problem \eqref{eq:minTaut};
	\item[(iii)] $\mathcal{J}[\widetilde U_\alpha] < \mathcal{J}[U_\alpha]$.
\end{itemize}
This will provide the desired contradiction, since we constructed an admissible competitor whose energy is strictly lower than that of the minimizer.

To prove this energy inequality, we will use the following strict quantitative convexity of the function $t \mapsto \sqrt{1+t^2}$:  There exist $\eps_0 >0$ and $s_0>0$ such that whenever $|\xi - \beta^+| < \eps_0$ and $s<s_0$, it holds that
\begin{equation}\label{eq:strictconv}
	s + \sqrt{1+\left(\frac12 \beta^- + \frac12 \beta^+\right)^2} < \frac12 \sqrt{1+\left(\beta^-\right)^2} + \frac12 \sqrt{1+\xi^2}.
\end{equation}

Next, we observe that Lemma~\ref{lem:geomtaut} and the assumptions on $x_0$ yield that  that $U_\alpha$ is affine on $(x_0-\delta, x_0]$ and concave on $[x_0, x_0 + 2\alpha/\Lip(F))$. In particular, we have that
\begin{align}
		& U_\alpha (y) = U_\alpha(x_0) + \beta^- (y-x_0) \enspace\hbox{for all } y\in (x_0-\delta, x_0],\label{eq:cont3a}\\
		& U_\alpha(y) \leq U_\alpha(x_0) + \beta^+(y-x_0) \enspace\hbox{for all } y\in [x_0, x_0 + 2\alpha/\Lip(F)),\label{eq:cont3b}\\
		& \beta_y:=\frac{U_\alpha( y) - U_\alpha(x_0)}{y-x_0}\uparrow \beta^+  \enspace\hbox{as } y\downarrow x_0^+,\enspace \hbox{with } y<x_0 + 2\alpha/\Lip(F). \label{eq:cont3c}
\end{align}

On the other hand, using the continuity of $U_\alpha$ and $F$, together with the identity $U_\alpha(x_0) = F(x_0) -\alpha$ and the bound $\Vert U_\alpha - F\Vert_\infty \leq \alpha$, allows us to find   $ \bar \delta >0$ such that 
\begin{equation}\label{eq:cont1}
	\begin{aligned}
		& U_\alpha(x) \leq F(x) \enspace \hbox{ and } \enspace |U_\alpha (x) - U_\alpha(x_0)| < \dfrac{\alpha}{2} \enspace \hbox{ whenever } \enspace |x-x_0|\leq \bar\delta.
	\end{aligned}
\end{equation}
 Since $\beta^+ = U_\alpha'^{,-}(x_0) >0$ and $U_\alpha$ is concave on $[x_0, x_0 + 2\alpha/\Lip(F))$, up to reducing $\bar \delta$, we can also ensure that
\begin{equation}\label{eq:cont1+}
	\begin{aligned}
		& U_\alpha \hbox{ is  increasing in  } \left[x_0, x_0+  \bar\delta\,\right],
	\end{aligned}
\end{equation}
and we restrict $\bar \delta$ to satisfy
\begin{equation}\label{eq:cont2}
	\begin{aligned}
		&  \bar\delta \leq \min \left\{ \delta, \dfrac{\alpha}{2(|\beta^+ + \beta^-| + |\beta^+ - \beta^-| + \beta^+)}, \dfrac{2\alpha}{\Lip(F)}\right\}.
	\end{aligned}
\end{equation}

Finally, recalling \eqref{eq:cont3c}, we fix $\bar y \in (x_0, x_0+\bar \delta)$ such that
\begin{equation}\label{eq:cont3d}
	\begin{aligned}
		&   0<\beta^+ - \beta_{\bar y} < \eps_0 \enspace \hbox{ and } \enspace 0<\tfrac{1}{2}\big( \tfrac{\beta^+}{\beta_{\bar y}}-1\big) < s_0,
	\end{aligned}
\end{equation}
and we set
\begin{equation}\label{eq:cont4}
	\begin{aligned}
		& \mu:=\bar y-x_0,\\
		& \lambda:=\frac{U_\alpha(\bar y) - U_\alpha(x_0)}{\beta^+}.\\
	\end{aligned}
\end{equation}

Using \eqref{eq:cont3b}, \eqref{eq:cont2},   and  \eqref{eq:cont4}, we have that 
\begin{equation}\label{eq:cont5}
	\begin{aligned}
		& 0 < \mu < \bar\delta \leq  2\alpha/\Lip(F) \enspace \hbox{ and } \enspace 0< \lambda \leq \mu < \bar \delta \leq \delta.
	\end{aligned}
\end{equation}

Next, we prove that (i) holds. By definition of $\widetilde U$, \eqref{eq:cont5}, \eqref{eq:cont3a} with $y=x_0 - \lambda$, and \eqref{eq:cont4},  we have that
\begin{equation*}
\begin{aligned}
	 \lim_{x\to (x_0 +\mu)^-} \widetilde U_\alpha(x) &= U_\alpha(x_0 - \lambda) +(\beta^+ + \beta^-)\lambda\\
	 & = U_\alpha(x_0) - \beta^- \lambda +(\beta^+ + \beta^-)\lambda = U_\alpha (\bar y) =  \lim_{x\to (x_0 +\mu)^+} \widetilde U_\alpha(x), 
\end{aligned}
\end{equation*}
from which we conclude that $\widetilde U_\alpha$ is continuous in $[0,1]$. Moreover, being absolutely continuous in $[0,1]\setminus[x_0-\lambda,x_0+\mu]$, and piecewise affine in $[x_0-\lambda,x_0+\mu]$, we obtain that $\widetilde{U}_\alpha\in AC(0,1)$.

We now show that (ii) is satisfied. Using the admissibility of $U_\alpha$ and the definition of  $\widetilde U$, we are left to prove that
\begin{equation}\label{eq:cont6}
	\begin{aligned}
		& F(x) - \alpha \leq U_\alpha(x_0 - \lambda) + \frac{1}{2} (\beta^+ + \beta^-)\big(x-(x_0-\lambda)\big)  \leq  F(x) + \alpha \quad \hbox{for all } x\in [x_0 -\lambda, x_0+\lambda]
	\end{aligned}
\end{equation}
and
\begin{equation}\label{eq:cont7}
	\begin{aligned}
		& F(x) - \alpha \leq U_\alpha(x_0 - \lambda) +(\beta^+ + \beta^-)\lambda  \leq  F(x) + \alpha \quad \hbox{for all } x\in [x_0 +\lambda, x_0+\mu].
	\end{aligned}
\end{equation}

We start by establishing the lower bound in \eqref{eq:cont6}. Using the inequality $\beta^+ > \beta^-$ and  \eqref{eq:cont3a}, we have for all $x\in [x_0 -\lambda, x_0]$ that
\begin{equation*}
	\begin{aligned}
		U_\alpha(x_0 - \lambda) + \frac{1}{2} (\beta^+ + \beta^-)\big(x-(x_0-\lambda)\big) \geq U_\alpha(x_0 - \lambda) + \beta^- \big(x-(x_0-\lambda)\big) = U_\alpha(x) \geq F(x) - \alpha,
	\end{aligned}
\end{equation*}
where we used the admissibility of $U_\alpha$ in the last estimate. On the other hand, the inequality $\beta^+ - \beta^->0$ and  \eqref{eq:cont3b} yield   for all $x\in [x_0 , x_0 +\lambda]$ that
\begin{equation*}
	\begin{aligned}
		&U_\alpha(x_0 - \lambda) + \frac{1}{2} (\beta^+ + \beta^-)\big(x-(x_0-\lambda)\big) = U_\alpha(x_0) - \beta^-\lambda  + \frac{1}{2} (\beta^+ + \beta^-)\big(x-(x_0-\lambda)\big)\\
		&\quad \geq U_\alpha(x) - \beta^+(x-x_0)   - \beta^-\lambda  + \frac{1}{2} (\beta^+ + \beta^-)\big(x-(x_0-\lambda)\big) =  U_\alpha(x) + \frac{1}{2} (\beta^+ - \beta^-)\big((x_0+\lambda)-x\big)\\
		&\quad  \geq  U_\alpha(x) \geq F(x) - \alpha.
	\end{aligned}
\end{equation*}
Thus, the lower bound in \eqref{eq:cont6} holds. To establish the upper bound, we note that for all $x\in [x_0 -\lambda, x_0+\lambda]$,  \eqref{eq:cont3a} with $y=x_0 - \lambda$, \eqref{eq:cont1}, \eqref{eq:cont2},  and \eqref{eq:cont5}  yield
\begin{equation*}
	\begin{aligned}
		&U_\alpha(x_0 - \lambda) + \frac{1}{2} (\beta^+ + \beta^-)\big(x-(x_0-\lambda)\big) = U_\alpha(x_0) - \beta^-\lambda  + \frac{1}{2} (\beta^+ + \beta^-)\big(x-(x_0-\lambda)\big)\\
		&\quad \leq U_\alpha(x) +|U_\alpha(x_0) - U_\alpha(x) |  + \frac{1}{2} |\beta^+ + \beta^-||x-x_0|+ \frac{1}{2} |\beta^+ - \beta^-|\lambda\\
		&\quad  \leq  F(x) + \alpha.
	\end{aligned}
\end{equation*}
Consequently, \eqref{eq:cont6} holds.

To prove  \eqref{eq:cont7}, we take $x\in [x_0 +\lambda, x_0+\mu]$,  and observe that the definition of $\widetilde U_\alpha$,  \eqref{eq:cont3a} with $y=x_0 - \lambda$, \eqref{eq:cont4},   \eqref{eq:cont1+}, and the admissibility of $U_\alpha$, used in this order, yield
\begin{equation*}
	\begin{aligned}
		\widetilde U_\alpha (x) &=   U_\alpha   (x_0 - \lambda) + (\beta^+ + \beta^-) \lambda = U_\alpha(x_0) + \beta^+\lambda \\ 
        & = U_\alpha(x_0 + \mu) = \max_{ [x_0 +\lambda, x_0+\mu]} U_\alpha \geq U (x)   \geq  F(x) - \alpha.
	\end{aligned}
\end{equation*}
Moreover,
\begin{equation*}
	\begin{aligned}
		&\widetilde U_\alpha (x)  = U_\alpha(x_0) + \beta^+\lambda    \leq U_\alpha(x) + |U_\alpha(x_0 ) - U_\alpha(x)| + \beta^+\lambda  \leq F(x) +\alpha,
	\end{aligned}
\end{equation*}
where we used \eqref{eq:cont1}, \eqref{eq:cont2}, and \eqref{eq:cont5} in the last estimate. Hence, also 
 \eqref{eq:cont7}  holds.

We are thus left with proving (iii).
Using the definition of $\widetilde{U}_\alpha$, we need to prove the energy inequality only in the region $[x_0-\lambda, x_0+\mu]$; namely,
\begin{equation}\label{eq:ineq_energy_competitor_1}
\int_{x_0 - \lambda}^{x_0+\mu} \sqrt{1+\left(\widetilde U_\alpha'\right)^2} \dd x
<\int_{x_0 - \lambda}^{x_0+\mu} \sqrt{1+\left(U_\alpha'\right)^2} \dd x.
\end{equation}
Now,
\begin{equation}\label{eq:ineq_energy_competitor_2}
\int_{x_0 - \lambda}^{x_0+\mu} \sqrt{1+\left(\widetilde U_\alpha'\right)^2} \dd x
= (\mu-\lambda) + 2\lambda \sqrt{1+\frac{1}{4}(\beta^- + \beta^+)^2},
\end{equation}
and
\begin{align}\label{eq:ineq_energy_competitor_3}
\int_{x_0 - \lambda}^{x_0+\mu} \sqrt{1+\left(U_\alpha'\right)^2} \dd x
&=\lambda \sqrt{1 + (\beta^-)^2} + \int_{x_0}^{x_0+\mu} \sqrt{1+\left(U_\alpha'\right)^2} \dd x \nonumber\\
&\geq \lambda \sqrt{1+(\beta^-)^2} + \mu \sqrt{1+\left(\frac{U_\alpha(x_0+\mu) - U_\alpha(x_0)}{\mu}\right)^2} \nonumber \\
&\geq \lambda \sqrt{1+(\beta^-)^2} + \lambda \sqrt{1+\left(\frac{U_\alpha(x_0+\mu) - U_\alpha(x_0)}{\mu}\right)^2},
\end{align}
where the previous to last inequality follows from the fact that $U_\alpha$ is affine on $(x_0 - \lambda, x_0)$ and concave on $(x_0, x_0 +\mu)$, while the last inequality from the fact that $\mu\geq \lambda$.
Thus, dividing \eqref{eq:ineq_energy_competitor_2} and \eqref{eq:ineq_energy_competitor_3} by $2\lambda$, it follows that \eqref{eq:ineq_energy_competitor_1} is proved if we show that
\begin{equation}\label{eq:ineq_energy_competitor_4}
\frac{\mu-\lambda}{2\lambda} + \sqrt{1+\frac{1}{4}(\beta^- + \beta^+)^2}
    <  \frac{1}{2}\sqrt{1+(\beta^-)^2}
    + \frac{1}{2} \sqrt{1+\left(\frac{U_\alpha(x_0+\mu) - U_\alpha(x_0)}{\mu}\right)^2}.
\end{equation}

The preceding estimate follows from \eqref{eq:strictconv} once we obtain that
\begin{equation}\label{eq:tousecxty}
	\frac{\mu-\lambda}{2\lambda}<s_0 \enspace \hbox{ and } \enspace \left| \frac{U_\alpha(x_0+\mu) - U_\alpha(x_0)}{\mu} - \beta^+  \right| < \eps_0.
\end{equation}
By \eqref{eq:cont4} and \eqref{eq:cont3d} (see also \eqref{eq:cont3c}), we have that
\begin{equation*}
	\frac{\mu-\lambda}{2\lambda} = \tfrac{1}{2}\big( \tfrac{\beta^+}{\beta_{\bar y}}-1\big) < s_0  \hbox{ and } \enspace \left| \frac{U_\alpha(x_0+\mu) - U_\alpha(x_0)}{\mu} - \beta^+  \right| =|\beta_{\bar y} - \beta^+| =\beta^+ - \beta_{\bar y}  < \eps_0.
\end{equation*}
 Hence, \eqref{eq:tousecxty} holds, and so does \eqref{eq:ineq_energy_competitor_4} by \eqref{eq:strictconv}. This concludes the proof of (iii), as well of the $\beta^+>0$ case.

Finally, we consider the case where $\beta^+\leq 0$. In this case, the competitor is defined for $x\in [0,1]$ by
\begin{equation*}
\begin{aligned}
\widetilde U_\alpha (x):=
\begin{cases}
U_\alpha(x) & \text{if } x < x_0 - \lambda,\\
U_\alpha(x_0 - \lambda) + \frac{1}{2} (\beta^+ + \beta^-)\big(x-(x_0-\lambda)\big)  & \text{if } x_0-\lambda \leq x < x_0 + \lambda, \\
\displaystyle\frac{U_\alpha(x_0+\mu)-\alpha - U_\alpha(x_0)-\beta\lambda}{\mu-\lambda}(x-x_0-\lambda)
    + U_\alpha(x_0)+\beta\lambda & \text{if } x_0 + \lambda \leq x \leq x_0 + \mu ,\\
U_\alpha(x) & \text{if } x > x_0 + \mu.
\end{cases}
\end{aligned}
\end{equation*}
In this case, though, we have the freedom to choose $\lambda$ and $\mu$ as we please. We therefore take $\lambda$ sufficiently small, and $\mu=\lambda+\lambda^2$.
Using similar arguments as in the previous case, we conclude.

\textbf{Step 2.}
We now consider the case in  (DR). The proof follows by applying the argument of Step 1 to the function $V_\alpha(x):= U_\alpha(2x_0-x)$.

\textbf{Step 3.}
We prove case in (UL). The proof follows by applying the argument of Step 1 to the function $V_\alpha(x):= -U_\alpha(x)$.

\textbf{Step 4.}
We prove case in (UR). The proof follows by applying the argument of Step 2 to the function $V_\alpha(x):= -U_\alpha(x)$.
\end{proof}


We now prove the monotonicity of the contact sets.

\begin{theorem}\label{thm:contact_set_inclusion}
Under the assumptions of Theorem~\ref{thm:optcond}, it holds for all $0<\alpha_1<\alpha_2<+\infty$ that
\begin{equation}\label{eq:monCalphas}
C_{\alpha_1}^+\cup C_{\alpha_1}^- \supset C_{\alpha_2}^+\cup C_{\alpha_2}^-.
\end{equation}
\end{theorem}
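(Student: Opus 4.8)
The plan is to prove the formally stronger inclusions $C_{\alpha_2}^- \subseteq C_{\alpha_1}^-$ and $C_{\alpha_2}^+ \subseteq C_{\alpha_1}^+$ by a single competitor construction, using only the uniqueness of the taut string minimizer (recorded after Problem~\ref{prob:taut}) and two elementary features of $\mathcal{J}$: it depends on $V$ only through $V'$, so $\mathcal{J}[V+c]=\mathcal{J}[V]$ for constants $c$; and it is \emph{modular} under lattice operations,
\[
\mathcal{J}[V\vee W] + \mathcal{J}[V\wedge W] = \mathcal{J}[V] + \mathcal{J}[W] \qquad\text{for all } V,W\in AC(0,1).
\]
The identity holds because for $\mathcal{L}^1$-a.e.\ $x$ the unordered pair $\{(V\vee W)'(x),(V\wedge W)'(x)\}$ equals $\{V'(x),W'(x)\}$: this is clear on the open set $\{V>W\}$, symmetric on $\{V<W\}$, and on $\{V=W\}$ it holds since $V'=W'$ $\mathcal{L}^1$-a.e.\ on a level set of $V-W$ and $V\vee W=V\wedge W=V$ there.

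Fix $x_0\in C_{\alpha_2}^-$ (the case $x_0\in C_{\alpha_2}^+$ follows by applying the statement to $(-U_\alpha,-F,-f)$, which interchanges $C_\alpha^+$ and $C_\alpha^-$ and leaves Problems~\ref{prob:rof} and~\ref{prob:taut} invariant), and suppose for contradiction that $x_0\notin C_{\alpha_1}^-$, i.e.\ $U_{\alpha_1}(x_0)>F(x_0)-\alpha_1$. Set $g:=U_{\alpha_1}-(\alpha_2-\alpha_1)$ and define
\[
\widetilde U := U_{\alpha_2}\vee g,\qquad w := U_{\alpha_2}\wedge g,\qquad h := w+(\alpha_2-\alpha_1)=\min\!\big(U_{\alpha_2}+(\alpha_2-\alpha_1),\,U_{\alpha_1}\big).
\]
One checks routinely that $\widetilde U\in\Acal_{\alpha_2}$ and $h\in\Acal_{\alpha_1}$: all three functions lie in $AC(0,1)$; at the endpoints $g(0)=F(0)-(\alpha_2-\alpha_1)<F(0)=U_{\alpha_2}(0)$ (and likewise at $1$), so $\widetilde U(0)=F(0)$, $\widetilde U(1)=F(1)$, while $U_{\alpha_2}(0)+(\alpha_2-\alpha_1)>F(0)=U_{\alpha_1}(0)$ (and likewise at $1$), so $h(0)=F(0)$, $h(1)=F(1)$; and, using $\alpha_1<\alpha_2$, one has $F-\alpha_2\leq g\leq F+2\alpha_1-\alpha_2\leq F+\alpha_2$, which gives $\|\widetilde U-F\|_\infty\leq\alpha_2$, whereas $h\leq U_{\alpha_1}\leq F+\alpha_1$ and $h\geq F-\alpha_1$ because both $U_{\alpha_2}+(\alpha_2-\alpha_1)\geq F-\alpha_1$ and $U_{\alpha_1}\geq F-\alpha_1$.

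Then, by modularity and constant-invariance of $\mathcal{J}$ (note $g$ is a translate of $U_{\alpha_1}$ and $h$ of $w$), followed by the minimality of $U_{\alpha_1}$ over $\Acal_{\alpha_1}\ni h$,
\[
\mathcal{J}[\widetilde U] = \mathcal{J}[U_{\alpha_2}] + \mathcal{J}[g] - \mathcal{J}[w] = \mathcal{J}[U_{\alpha_2}] + \mathcal{J}[U_{\alpha_1}] - \mathcal{J}[h] \leq \mathcal{J}[U_{\alpha_2}].
\]
Since $\widetilde U\in\Acal_{\alpha_2}$ and $U_{\alpha_2}$ minimizes $\mathcal{J}$ over $\Acal_{\alpha_2}$, equality holds throughout, so $\widetilde U=U_{\alpha_2}$ by uniqueness; but $\widetilde U(x_0)\geq g(x_0)=U_{\alpha_1}(x_0)-(\alpha_2-\alpha_1)>F(x_0)-\alpha_2=U_{\alpha_2}(x_0)$, a contradiction. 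This yields $C_{\alpha_2}^-\subseteq C_{\alpha_1}^-$, and symmetrically $C_{\alpha_2}^+\subseteq C_{\alpha_1}^+$, hence \eqref{eq:monCalphas}.

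The only genuine obstacle here is spotting the competitor $U_{\alpha_2}\vee\big(U_{\alpha_1}-(\alpha_2-\alpha_1)\big)$ together with the modular identity; the rest is bookkeeping, and this route uses neither Theorem~\ref{thm:optcond}, Lemma~\ref{lem:geomtaut}, nor Proposition~\ref{prop:linfty_stability}. A heavier alternative, closer to the machinery built above, would argue by soft contradiction: using the continuity of $\alpha\mapsto U_\alpha(x_0)$ from Proposition~\ref{prop:linfty_stability} one reduces to a ``first-contact'' parameter $\beta\in(\alpha_1,\alpha_2]$ with $x_0\in C_\beta^-$ but $x_0\notin C_\alpha^-\cup C_\alpha^+$ for all $\alpha\in[\alpha_1,\beta)$; then $U_\alpha$ is affine near $x_0$ for such $\alpha$ (Lemma~\ref{lem:geomtaut}), $U_\beta$ is concave near $x_0$, and a contradiction is extracted from the one-sided slope bounds of Theorem~\ref{thm:optcond} at $x_0$ and at the limiting endpoints of the affine intervals. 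In that approach the delicate point is that every naturally available slack is of order $\beta-\alpha\to 0$, so one must argue with the one-sided derivative constraints rather than with uniform estimates.
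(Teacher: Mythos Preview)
Your proof is correct and takes a genuinely different, far more economical route than the paper's. The lattice argument is sound: the modular identity $\mathcal{J}[V\vee W]+\mathcal{J}[V\wedge W]=\mathcal{J}[V]+\mathcal{J}[W]$ holds for $V,W\in AC(0,1)$ exactly as you justify it, the admissibility checks for $\widetilde U\in\Acal_{\alpha_2}$ and $h\in\Acal_{\alpha_1}$ are clean, and uniqueness of the taut string minimizer (cited right after Problem~\ref{prob:taut}) closes the contradiction. You in fact obtain the sharper separate inclusions $C_{\alpha_2}^{\pm}\subseteq C_{\alpha_1}^{\pm}$, which the paper also reaches but only at the end of a long argument.

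The contrast with the paper's proof is substantial. The paper proceeds by a local-to-global continuation: for each $\bar\alpha$ it finds $\beta(\bar\alpha)>0$ so that the inclusion holds on $(\bar\alpha-\beta(\bar\alpha),\bar\alpha]$, and then propagates. The local step consumes the full toolbox built earlier---the $L^\infty$ stability of Proposition~\ref{prop:linfty_stability} to control how contact intervals move, the convexity/concavity structure of Lemma~\ref{lem:geomtaut} to localize to intervals $[m_{i-1},m_i]$ touching only one obstacle, a concave-envelope characterization of the restricted minimizer, and the one-sided slope inequalities of Theorem~\ref{thm:optcond} to rule out the various ways the transition points $x_i^\pm(\alpha)$ could drift the wrong way. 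Your argument bypasses all of this machinery; in particular it does not even use the hypothesis that the one-sided approximate limits of $f$ exist, so it proves the contact-set monotonicity for any $f\in L^2(0,1)$ for which Problem~\ref{prob:taut} is well-posed. What the paper's approach buys in return is finer structural information---the tracking of $x_i^\pm(\alpha)$ and the pointwise inequality $U_\alpha(m_i)-(\bar\alpha-\alpha)\leq U_{\bar\alpha}(m_i)$---which is reused later in the proofs of Propositions~\ref{prop:from-contact-to-jump} and~\ref{prop:monot_ampl}. Your proof yields the global inequality $U_{\alpha_2}\geq U_{\alpha_1}-(\alpha_2-\alpha_1)$ (equivalently $\widetilde U=U_{\alpha_2}$) directly, which is precisely the ingredient those later arguments need, so the simplification propagates.
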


\begin{proof}
We claim that for every $\bar{\alpha}>0$, there exists $\beta(\bar{\alpha})\in (0, \bar\alpha]$ such that
\begin{equation}\label{eq:monot_contact_set}
C_\alpha^+\cup C_\alpha^- \supset 
C_{\bar{\alpha}}^+\cup C_{\bar{\alpha}}^- \quad \hbox{for all } \alpha\in(\bar{\alpha}-\beta(\bar{\alpha}), \bar{\alpha}].
\end{equation}

Once the preceding  claim is proved, we obtain \eqref{eq:monCalphas} by showing that \eqref{eq:monot_contact_set} implies that
\begin{equation}\label{eq:maxbeta}
	\bar\beta := \max \Big\{ \beta\in (0,\bar\alpha]\colon C_\alpha^+\cup C_\alpha^- \supset 
	C_{\bar{\alpha}}^+\cup C_{\bar{\alpha}}^-  \hbox{ for all } \alpha\in(\bar{\alpha}- \beta, \bar{\alpha}]\Big\}
\end{equation}
coincides with $\bar\alpha$. In fact,  assume by \ contradiction that \eqref{eq:monot_contact_set} holds  but $\bar\beta < \bar\alpha$. Then, we set $\tilde \alpha:= \bar\alpha - \bar\beta$, and use  \eqref{eq:monot_contact_set} to find $\beta(\tilde\alpha) \in (0, \tilde\alpha]$ such that $C_\alpha^+\cup C_\alpha^- \supset 
C_{\tilde{\alpha}}^+\cup C_{\tilde{\alpha}}^-$ for all $\alpha \in (\tilde\alpha- \beta(\tilde\alpha), \tilde\alpha] = (\bar\alpha - (\bar\beta + \beta(\tilde\alpha)), \bar\alpha-\bar\beta)]$. Thus, using \eqref{eq:maxbeta} for $\bar\beta$, we conclude that $\gamma:=\bar\beta + \beta(\tilde\alpha)$ belongs to the set on the right-hand side of \eqref{eq:maxbeta}, which contradicts the maximality of $\bar\beta$.

\begin{figure}[!ht]
\begin{tikzpicture}
\begin{axis}[height=0.4\textwidth,width=0.515\textwidth, clip=true,
    xmin=0,xmax=1,
    ymin=-0.1,ymax=0.25,
    ytick={-0.1,-0.05,...,0.25},
    yticklabel style={/pgf/number format/.cd, fixed},
    xtick={0,0.2064,0.2825,0.4565,0.5440,0.7190,0.7850,1.0000},
    xticklabels={$x_1^-\!(\alpha)$,$x_1^+\!(\alpha)\ \ \ $,$\ \ \ x_2^+\!(\alpha)$,$x_2^-\!(\alpha)\ \ $,$\ \ x_3^-\!(\alpha)$,$x_3^+\!(\alpha)\ \ \ \,$,$\ \ \ \,x_4^+\!(\alpha)$,$x_4^-\!(\alpha)$},
    every x tick/.style={color=black, thick},
    enlargelimits={abs=1cm},
    axis lines = left,
    axis line style={=>},
    xlabel style={at={(ticklabel* cs:1)},anchor=north west},
    ylabel style={at={(ticklabel* cs:1)},anchor=south west}
    ]
\addplot[samples=100, domain=0:1, black, thick] {-cos(4*pi*deg(x+0.25))/4/pi+sin(pi*deg(x+0.25)/2)/pi-0.202-0.03} node[anchor=west, pos=0.85, font=\footnotesize]{$F-\alpha$};
\addplot[samples=100, domain=0:1, black, thick] {-cos(4*pi*deg(x+0.25))/4/pi+sin(pi*deg(x+0.25)/2)/pi-0.202+0.03} node[anchor=south, pos=0.5, font=\footnotesize]{$F+\alpha$};
\addplot[samples=2, color=OliveGreen, very thick] coordinates {(0,0) (0.2064,-0.0308)};
\addplot[samples=20, domain=0.206:0.284, color=OliveGreen, very thick] {-cos(4*pi*deg(x+0.25))/4/pi+sin(pi*deg(x+0.25)/2)/pi-0.202+0.03};
\addplot[samples=2, color=OliveGreen, very thick] coordinates {(0.2825,-0.0088) (0.4565,0.121)};
\addplot[samples=20, domain=0.456:0.545, color=OliveGreen, very thick] {-cos(4*pi*deg(x+0.25))/4/pi+sin(pi*deg(x+0.25)/2)/pi-0.202-0.03}  node[anchor=south, pos=0.51, font=\footnotesize]{$U_\alpha$};
\addplot[samples=2, color=OliveGreen, very thick] coordinates {(0.544,0.1374) (0.719,0.0723)};
\addplot[samples=20, domain=0.717:0.79, color=OliveGreen, very thick] {-cos(4*pi*deg(x+0.25))/4/pi+sin(pi*deg(x+0.25)/2)/pi-0.202+0.03};
\addplot[samples=2, color=OliveGreen, very thick] coordinates {(0.785,0.0738) (1,0.1718)};
\draw[color=OliveGreen, thick, dotted] (axis cs:0.0020, -0.098) -- (axis cs:0.0020,  0.0000);
\draw[color=OliveGreen, thick, dotted] (axis cs:0.2064, -0.098) -- (axis cs:0.2064, -0.0308);
\draw[color=OliveGreen, thick, dotted] (axis cs:0.2825, -0.098) -- (axis cs:0.2825, -0.0088);
\draw[color=OliveGreen, thick, dotted] (axis cs:0.4565, -0.098) -- (axis cs:0.4565,  0.1210);
\draw[color=OliveGreen, thick, dotted] (axis cs:0.5440, -0.098) -- (axis cs:0.5440,  0.1374);
\draw[color=OliveGreen, thick, dotted] (axis cs:0.7190, -0.098) -- (axis cs:0.7190,  0.0723);
\draw[color=OliveGreen, thick, dotted] (axis cs:0.7850, -0.098) -- (axis cs:0.7850,  0.0738);
\draw[color=OliveGreen, thick, dotted] (axis cs:0.9980, -0.098) -- (axis cs:0.9980,  0.1718);
\draw[color=OliveGreen, ultra thick] (axis cs:0.2064, -0.098) -- (axis cs:0.2825,  -0.098);
\node[color=OliveGreen, font=\scriptsize] at (axis cs:0.248,-0.06) {$C_\alpha^+$};
\draw[color=OliveGreen, ultra thick] (axis cs:0.4565, -0.098) -- (axis cs:0.5440,  -0.098)
 node[anchor=south, pos=0.55, font=\scriptsize]{$C_\alpha^-$};
\draw[color=OliveGreen, ultra thick] (axis cs:0.7190, -0.098) -- (axis cs:0.7850,  -0.098)
 node[anchor=south, pos=0.55, font=\scriptsize]{$C_\alpha^+$};
\end{axis}
\end{tikzpicture}
\raisebox{0.18cm}{\begin{tikzpicture}
\begin{axis}[height=0.4\textwidth,width=0.515\textwidth, clip=true,
    xmin=0,xmax=1,
    ymin=-0.15,ymax=0.25,
    ytick={-0.15,-0.10,...,0.25},
    yticklabel style={/pgf/number format/.cd, fixed},
    xtick={0.109,0.3695,0.632,0.889},
    xticklabels={$m_1$,$m_2$,$m_3$,$m_4$},
    every x tick/.style={color=black, thick},
    enlargelimits={abs=1cm},
    axis lines = left,
    axis line style={=>},
    xlabel style={at={(ticklabel* cs:1)},anchor=north west},
    ylabel style={at={(ticklabel* cs:1)},anchor=south west}
    ]
\addplot[samples=100, domain=0:1, black, thick] {-cos(4*pi*deg(x+0.25))/4/pi+sin(pi*deg(x+0.25)/2)/pi-0.202-0.06} node[anchor=east, pos=0.72, font=\footnotesize]{$F\!-\!\bar\alpha$};
\addplot[samples=100, domain=0:1, black, thick] {-cos(4*pi*deg(x+0.25))/4/pi+sin(pi*deg(x+0.25)/2)/pi-0.202} node[anchor=south, pos=0.548, font=\footnotesize]{$F\!+\!2\alpha\!-\!\bar\alpha$};
\addplot[samples=100, domain=0:1, gray, thick] {-cos(4*pi*deg(x+0.25))/4/pi+sin(pi*deg(x+0.25)/2)/pi-0.202+0.06} node[anchor=east, pos=0.35, font=\footnotesize]{$F\!+\!\bar\alpha$};
\addplot[samples=2, color=OliveGreen, very thick] coordinates {(0,0-0.03) (0.2064,-0.0308-0.03)};
\addplot[samples=20, domain=0.206:0.284, color=OliveGreen, very thick] {-cos(4*pi*deg(x+0.25))/4/pi+sin(pi*deg(x+0.25)/2)/pi-0.202+0.0};
\addplot[samples=2, color=OliveGreen, very thick] coordinates {(0.2825,-0.0088-0.03) (0.4565,0.121-0.03)};
\addplot[samples=20, domain=0.456:0.545, color=OliveGreen, very thick] {-cos(4*pi*deg(x+0.25))/4/pi+sin(pi*deg(x+0.25)/2)/pi-0.202-0.06};
\addplot[samples=2, color=OliveGreen, very thick] coordinates {(0.544,0.1374-0.03) (0.719,0.0723-0.03)};
\addplot[samples=20, domain=0.717:0.79, color=OliveGreen, very thick] {-cos(4*pi*deg(x+0.25))/4/pi+sin(pi*deg(x+0.25)/2)/pi-0.202+0.0};
\addplot[samples=2, color=OliveGreen, very thick] coordinates {(0.785,0.0738-0.03) (1,0.1718-0.03)};
\node[color=OliveGreen, font=\scriptsize] at (axis cs:0.896,0.04) {$U_\alpha\!+\!\alpha\!-\!\bar\alpha$};
\draw[color=OliveGreen, thick, dotted] (axis cs:0.2064, -0.15) -- (axis cs:0.2064, -0.0308-0.03);
\draw[color=OliveGreen, thick, dotted] (axis cs:0.2825, -0.15) -- (axis cs:0.2825, -0.0088-0.03);
\draw[color=OliveGreen, thick, dotted] (axis cs:0.4565, -0.15) -- (axis cs:0.4565,  0.1210-0.03);
\draw[color=OliveGreen, thick, dotted] (axis cs:0.5440, -0.15) -- (axis cs:0.5440,  0.1374-0.03);
\draw[color=OliveGreen, thick, dotted] (axis cs:0.7190, -0.15) -- (axis cs:0.7190,  0.0723-0.03);
\draw[color=OliveGreen, thick, dotted] (axis cs:0.7850, -0.15) -- (axis cs:0.7850,  0.0738-0.03);
\draw[color=OliveGreen, ultra thick] (axis cs:0.2064, -0.148) -- (axis cs:0.2825,  -0.148);
\draw[color=OliveGreen, ultra thick] (axis cs:0.4565, -0.148) -- (axis cs:0.5440,  -0.148);
\draw[color=OliveGreen, ultra thick] (axis cs:0.7190, -0.148) -- (axis cs:0.7850,  -0.148);
\addplot[samples=2, color=RoyalBlue, very thick] coordinates {(0,0.03-0.03) (0.221,0.0282-0.03)};
\addplot[samples=20, domain=0.219:0.258, color=RoyalBlue, very thick] {-cos(4*pi*deg(x+0.25))/4/pi+sin(pi*deg(x+0.25)/2)/pi-0.202+0.06};
\addplot[samples=2, color=RoyalBlue, very thick] coordinates {(0.257,0.0362-0.03) (0.482,0.136-0.03)};
\addplot[samples=20, domain=0.481:0.520, color=RoyalBlue, very thick] {-cos(4*pi*deg(x+0.25))/4/pi+sin(pi*deg(x+0.25)/2)/pi-0.202-0.06}  node[anchor=south, pos=0.51, font=\footnotesize]{$U_{\bar\alpha}$};
\addplot[samples=2, color=RoyalBlue, very thick] coordinates {(0.519,0.143-0.03) (0.745,0.1267-0.03)};
\addplot[samples=20, domain=0.744:0.778, color=RoyalBlue, very thick] {-cos(4*pi*deg(x+0.25))/4/pi+sin(pi*deg(x+0.25)/2)/pi-0.202+0.06};
\addplot[samples=2, domain=0.778:1, color=RoyalBlue, very thick] coordinates {(0.777,0.131-0.03) (1,0.202-0.03)};
\draw[color=RoyalBlue, thick, dotted] (axis cs:0.2210, -0.147) -- (axis cs:0.2210, 0.0282-0.03);
\draw[color=RoyalBlue, thick, dotted] (axis cs:0.2570, -0.147) -- (axis cs:0.2570, 0.0362-0.03);
\draw[color=RoyalBlue, thick, dotted] (axis cs:0.4820, -0.147) -- (axis cs:0.4820, 0.1360-0.03);
\draw[color=RoyalBlue, thick, dotted] (axis cs:0.5190, -0.147) -- (axis cs:0.5190, 0.1430-0.03);
\draw[color=RoyalBlue, thick, dotted] (axis cs:0.7450, -0.147) -- (axis cs:0.7450, 0.1267-0.03);
\draw[color=RoyalBlue, thick, dotted] (axis cs:0.7770, -0.147) -- (axis cs:0.7770, 0.1310-0.03);
\draw[color=RoyalBlue, ultra thick] (axis cs:0.2210, -0.144) -- (axis cs:0.2570,  -0.144);
\node[color=RoyalBlue, font=\scriptsize] at (axis cs:0.32,-0.132) {$C_{\bar\alpha}^+$};
\draw[color=RoyalBlue, ultra thick] (axis cs:0.4820, -0.144) -- (axis cs:0.5190,  -0.144);
\node[color=RoyalBlue, font=\scriptsize] at (axis cs:0.585,-0.132) {$C_{\bar\alpha}^-$};
\draw[color=RoyalBlue, ultra thick] (axis cs:0.7450, -0.144) -- (axis cs:0.7750,  -0.144);
\node[color=RoyalBlue, font=\scriptsize] at (axis cs:0.825,-0.132) {$C_{\bar\alpha}^+$};
\end{axis}
\end{tikzpicture}}
\caption{The setup of the proof of Theorem \ref{thm:contact_set_inclusion} for $f$ as in Figure \ref{fig:rofvstaut}, $\alpha = 0.03$ and $\bar\alpha = 0.06$.}
\label{fig:contact_inclusion}
\end{figure}
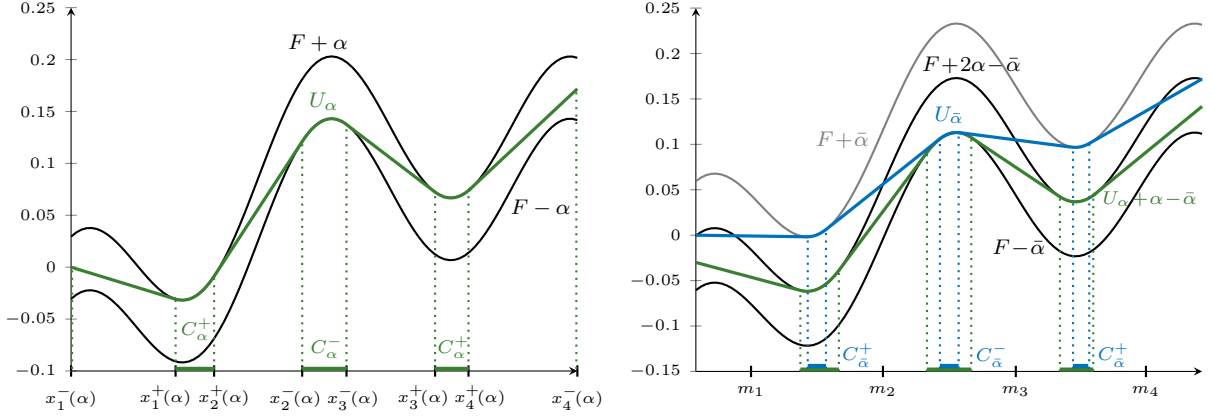

Consequently, we are left to prove the claim in  \eqref{eq:monot_contact_set}, illustrated in Figure \ref{fig:contact_inclusion}.
This will be done in three steps. Before entering into the details, we present the idea of the strategy.
First of all, we notice that it is possible to localize the problem, and looking only at intervals of the form $[m_i,m_{i+1}]$ where either $U_\alpha < F + \alpha$, or $U_\alpha > F - \alpha$. This is done in Step 1.
To fix the ideas, let us suppose that $U_\alpha < F + \alpha$ in the interval $[m_i,m_{i+1}]$.
In order to visualize better what happens, we assume that the lower obstacle is fixed when we move the parameter $\alpha$.
Then, in Step~2 we show that this localized problem solved by $U_\alpha$ is equivalent to taking the concave envelope of the function given by the lower obstacle $F-\alpha$ in $(m_i,m_{i+1})$ and the boundary conditions of $U_\alpha$ at the points $\{m_i,m_{i+1}\}$.
We now focus on the behavior of the boundary conditions, and  we prove in Step~3 that they are monotone with respect to $\alpha$.
This allows us to reach the desired conclusion in Step~4.

\textbf{Step 1.} For each $\alpha>0$,  part 3 of Lemma \ref{lem:geomtaut} yields that there are only finitely many, say $k(\alpha)\in\N$, pairs of points $x^-_i(\alpha)$,  $x^+_i(\alpha)\in[0,1]$ satisfying for all  $i\in\{1,\dots, k(\alpha)\}$ the following four properties:
\begin{itemize}
\item[(i)] $x^-_i(\alpha) \in C_{\bar\alpha}^-\cup\{0,1\}$, $x^+_i(\alpha) \in C_{\bar\alpha}^+\cup\{0,1\}$;
\item[(ii)] $U_{\bar\alpha}$ is affine on the interval having $x_i^-(\alpha)$ and $x_i^+(\alpha)$ as endpoints;
\item[(iii)] The open interval having $x_i^-(\alpha)$ and $x_i^+(\alpha)$ as endpoints does not intersect $C^-_\alpha\cup C^+_\alpha$;
\item[(iv)] $| x^-_i(\alpha) - x^+_i(\alpha) | \geq \frac{\mathrm{Lip}(F)}{\alpha}$ whenever $x_i^\pm(\alpha) \in C_\alpha^\pm$.
\end{itemize}
Note that we do not write the explicit interval with $x_i^-(\alpha)$ and $x_i^+(\alpha)$ as endpoints because each of them alternates between being a left endpoint and a right endpoint.
Moreover, we observe that not all points in the boundary of the contact set are used, but only those where the solution transitions between the lower and the upper obstacle (see Figure \ref{fig:choice_of_points}), and that the first and the last such points are $0$ and $1$, respectively.

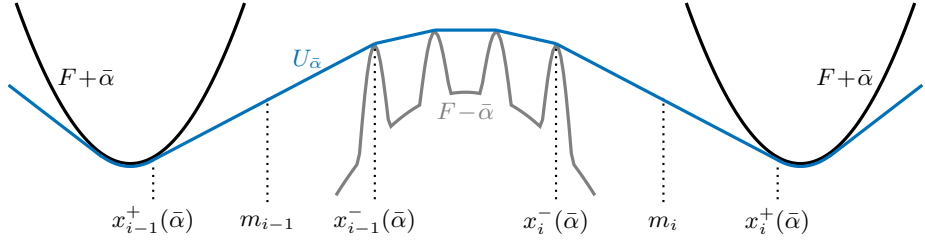
\begin{figure}[!ht]
\begin{tikzpicture}
\begin{axis}[height=0.3\textwidth,width=0.85\textwidth, clip=true,
    axis x line=none,
    axis y line=none,
    xmin=-3.0,xmax=3.0,
    ymin=-1.0,ymax=0.7
    ]
\addplot[samples=100, domain=-2.95:-1.45, black, very thick] {-0.5+2*abs((x+2.2)*(x+2.2))} node[anchor=west, pos=0.2, font=\small]{$F\!+\!{\bar\alpha}$};
\addplot[samples=100, domain=-0.85:0.85, gray, very thick] {-0.7*(cos(5*pi*deg(x)) <= 0.0)*(abs(x)<=0.7)*(0.5+0.8*abs(x))*cos(5*pi*deg(x))-abs(x*x)} node[anchor=north, pos=0.5, font=\small]{$F\!-\!{\bar\alpha}$};
\addplot[samples=100, domain=1.45:2.95, black, very thick] {-0.5+2*abs((x-2.2)*(x-2.2))} node[anchor=east, pos=0.8, font=\small]{$F\!+\!{\bar\alpha}$};
\addplot[samples=2, color=RoyalBlue, very thick] coordinates {(-3.0,0.05) (-2.39,-0.45)};
\addplot[samples=20, domain=-2.4:-2.05, color=RoyalBlue, very thick] {-0.52+2*abs((x+2.2)*(x+2.2))};
\addplot[samples=2, color=RoyalBlue, very thick] coordinates {(-2.06,-0.48) (-0.595,0.34)} node[anchor=south, pos=0.7, color=RoyalBlue, font=\small]{$U_{\bar\alpha}$};
\addplot[samples=2, color=RoyalBlue, very thick] coordinates {(-0.596,0.34) (-0.2,0.435)};
\addplot[samples=2, color=RoyalBlue, very thick] coordinates {(-0.201,0.435) (0.201,0.435)};
\addplot[samples=2, color=RoyalBlue, very thick] coordinates {(0.2,0.435) (0.596,0.34)};
\addplot[samples=2, color=RoyalBlue, very thick] coordinates {(0.595,0.34) (2.06,-0.48)};
\addplot[samples=20, domain=2.05:2.4, color=RoyalBlue, very thick] {-0.52+2*abs((x-2.2)*(x-2.2))};
\addplot[samples=2, color=RoyalBlue, very thick] coordinates {(2.39,-0.45) (3.0,+0.05)};
\draw[black, thick, dotted] (axis cs:-2.05, -0.75) -- (axis cs:-2.05,  -0.49);
\node[black, font=\small] at (axis cs:-2.05,-0.9){$x_{i-1}^+({\bar\alpha})$};
\draw[black, thick, dotted] (axis cs:-1.3, -0.77) -- (axis cs:-1.3,  -0.04);
\node[black, font=\small] at (axis cs:-1.3,-0.92){$m_{i-1}$};
\draw[black, thick, dotted] (axis cs:-0.595, -0.75) -- (axis cs:-0.595,  0.34);
\node[black, font=\small] at (axis cs:-0.595,-0.9){$x_{i-1}^-({\bar\alpha})$};
\draw[black, thick, dotted] (axis cs:0.595, -0.75) -- (axis cs:0.595,  0.34);
\node[black, font=\small] at (axis cs:0.595,-0.9){$x_{i}^-({\bar\alpha})$};
\draw[black, thick, dotted] (axis cs:1.3, -0.77) -- (axis cs:1.3,  -0.04);
\node[black, font=\small] at (axis cs:1.3,-0.92){$m_{i}$};
\draw[black, thick, dotted] (axis cs:2.05, -0.75) -- (axis cs:2.05,  -0.49);
\node[black, font=\small] at (axis cs:2.05,-0.9){$x_{i}^+({\bar\alpha})$};
\end{axis}
\end{tikzpicture}
\caption{Choice of points $x_i^\pm({\bar\alpha})$ and $m_i$ for the convex envelope argument of Theorem~\ref{thm:contact_set_inclusion}. In this example, not all points in $\partial C^-_{\bar\alpha}$ are used.}
\label{fig:choice_of_points}
\end{figure}

Fix $\bar{\alpha}>0$. For each $i=1,\dots, k(\bar\alpha)$, let
\[
m_i:= \frac{x^-_i(\bar{\alpha}) + x^+_i(\bar{\alpha})}{2},
\]
and fix
\[
0 < \delta < \min \left\{\frac{\big|x_i^-(\bar{\alpha}) - x_i^+(\bar{\alpha})\big|}{2} \,:\, i=1,\dots,k(\bar{\alpha}) \right\}.
\]
 Note that, by construction, each interval $[m_i, m_{i+1}]$ intersects $C^+_{\bar \alpha}$ or $C^-_{\bar \alpha}$, but not both. 
We claim that it is possible to choose $\beta(\bar\alpha) > 0$ such that for every $\alpha \in \big(\bar\alpha - \beta(\bar\alpha), \bar\alpha\big]$ the following hold:
\begin{itemize}
\item[(a)] $U_\alpha$ is affine in $(m_i - \delta, m_i + \delta)$;
\item[(b)] Each interval with endpoints $m_i$ and $m_{i+1}$ intersects $C^+_\alpha$ or $C^-_\alpha$, but not both;
\item[(c)] There exist $\widetilde{\delta}^{\pm}_i, \bar{\delta}^{\pm}_i>0$ such that
\begin{equation}\label{eq:position_contact_point_left}
x^-_i(\alpha)\in (x^-_i(\bar{\alpha}) - \widetilde{\delta}^-_i, x^-_i(\bar{\alpha}) + \bar{\delta}^-_i),
\end{equation}
\begin{equation}\label{eq:position_contact_point_right}
x^+_i(\alpha)\in (x^+_i(\bar{\alpha}) - \widetilde{\delta}^+_i, x^+_i(\bar{\alpha}) + \bar{\delta}^+_i),
\end{equation}
and
\begin{equation}\label{eq:position_contact_point_intersection}
(x^-_i(\bar{\alpha}) - \widetilde{\delta}^-_i, x^-_i(\bar{\alpha}) + \bar{\delta}^-_i) \cap (x^+_i(\bar{\alpha}) - \widetilde{\delta}^+_i, x^+_i(\bar{\alpha}) + \bar{\delta}^+_i) = \emptyset,
\end{equation}
for all $i\in\{1,\dots,k(\alpha)\}$;
\item[(d)] $k(\alpha)=k(\bar{\alpha})$.
\end{itemize}
We know that
\[
 F (x) - \bar{\alpha} < U_{\bar{\alpha}}(x) <  F (x) + \bar{\alpha},
\]
for every $x\in [m_i - \delta, m_i + \delta]$.
By Proposition \ref{prop:linfty_stability}, we have that
\begin{equation}\label{eq:unif}
U_\alpha\to U_{\bar{\alpha}} \quad\text{ uniformly as } \alpha\to\bar{\alpha}.
\end{equation}
Therefore, it is possible to find $\beta(\bar\alpha) > 0$ such that
\begin{equation}\label{eq:ineq_U_F}
 F (x) - \alpha < U_\alpha(x) <  F (x) + \alpha,
\end{equation}
for every $x\in (m_i - \delta, m_i + \delta)$, and every $\alpha \in \big(\bar\alpha - \beta(\bar\alpha), \bar\alpha\big]$.
By Lemma \ref{lem:geomtaut}, we know that $U_\alpha$ is affine in every connected component of $(0,1)\setminus (C^-_\alpha\cup C^+_\alpha)$.
Thus, by \eqref{eq:ineq_U_F}, we get that (a) is satisfied.

 To prove (b), we use the continuity of $F$ and $U_{\bar \alpha}$  to  fix $\eta>0$ such that: 
\begin{itemize}
\item[(I)] If $F(x)+\bar{\alpha} > U_{\bar{\alpha}}(x)$ for all $x\in [m_i,m_{i+1}]$, then $F(x)+\bar{\alpha}-\eta > U_{\bar{\alpha}}(x)$
for all $x\in [m_i,m_{i+1}]$;
\item[(II)] If $F(x)-\bar{\alpha} < U_{\bar{\alpha}}(x)$ for all $x\in [m_i,m_{i+1}]$, then $F(x) -\bar{\alpha}+\eta < U_{\bar{\alpha}}(x)$ for all $x\in [m_i,m_{i+1}]$.
\end{itemize}
 Note that either (I) or (II) hold because, as mentioned before, each interval $[m_i, m_{i+1}]$ intersects $C^+_{\bar \alpha}$ or $C^-_{\bar \alpha}$, but not both.  
Using \eqref{eq:unif}, and up to reducing $\beta(\bar{\alpha})$, we get that
\begin{equation}\label{eq:tub_ngbh}
U_{\bar{\alpha}}(x) - \eta < U_{\alpha}(x) < U_{\bar{\alpha}}(x) + \eta,\quad\quad\quad\text{ for all } x\in[0,1].
\end{equation}
Thanks to the choice of $\eta$, we obtain (b).

Now, we prove (c). We detail the proof of \eqref{eq:position_contact_point_left}, and observe that  \eqref{eq:position_contact_point_right} can be established similarly.
Consider $x^-_i(\bar{\alpha})$. Just to fix the ideas, assume that
\begin{equation}\label{eq:assumption_1}
(x^-_i(\bar{\alpha})-\delta, x^-_i(\bar{\alpha})) \subset (0,1)\setminus (C_{\bar{\alpha}}^-\cup C_{\bar{\alpha}}^+).
\end{equation}
The other case is treated similarly.
Up to further reducing the values of $\eta>0$  and $\beta(\bar{\alpha})$, the continuity of $F$ implies that
\[
F(x)-\bar{\alpha} < U_{\bar{\alpha}}(x)-\eta-\beta(\bar{\alpha}), \quad\quad\quad\text{ for all }x\in [m_i, x^-_i(\bar{\alpha}) - \delta].
\]
Therefore, we get from \eqref{eq:tub_ngbh}  that 
\[
F(x) - \alpha < F(x) - \bar{\alpha} + \beta(\bar{\alpha}) < U_{\bar{\alpha}}(x)-\eta < U_\alpha(x) \quad\quad\quad\text{ for all }x\in [m_i, x^-_i(\bar{\alpha}) - \delta].
\]
Thus, setting $\widetilde{\delta}^-_i:= \delta$,
\[
x^-_i(\alpha) > x^-_i(\bar{\alpha}) - \widetilde{\delta}^-_i.
\]
To prove that there exists $\bar{\delta}^-_i>0$ satisfying the required property, we consider two cases. First, we treat the situation where
\begin{equation}\label{eq:assumption_step1}
U'^{,-}_{\bar{\alpha}}(x^-_i(\bar{\alpha}))
    > U'^{,-}_{\bar{\alpha}}(x^-_{i+1}(\bar{\alpha})).
\end{equation}
Let
\[
y_i:= \max\Big\{ x\geq x^-_i(\bar{\alpha})\colon \,\, U'^{,-}_{\bar{\alpha}}(y) = U'^{,-}_{\bar{\alpha}}(x^-_i(\bar{\alpha})) \text{ for all } y\in [x^-_i(\bar{\alpha}), x] \Big\}
\]
and
\[
z_i:= \max\Big\{ x\leq x^-_{i+1}(\bar{\alpha})\colon\,\, U'^{,+}_{\bar{\alpha}}(y) = U'^{,+}_{\bar{\alpha}}(x^-_{i+1}(\bar{\alpha})) \text{ for all } y\in [x, x^-_{i+1}(\bar{\alpha})] \Big\}.
\]
Set
\[
\bar{\delta}^-_i := y_i-x^-_i(\bar{\alpha}) + \frac{z_i-y_i}{3}.
\]
Up to further reducing $\eta>0$, thanks to \eqref{eq:assumption_step1} and \eqref{eq:assumption_1}, we can also assume that
\begin{itemize}
\item[(III)] If $F(x)+\bar{\alpha} > U_{\bar{\alpha}}(x)$ for all $x\in [m_i,m_{i+1}]$, let $P_i\in[y_i,z_i]$ be such that
\[
U_{\bar{\alpha}}(y_i) + U'^{,-}_{\bar{\alpha}}(y_i)(P_i-y_i)
=
U_{\bar{\alpha}}(z_i) + U'^{,+}_{\bar{\alpha}}(z_i)(P_i-z_i).
\]
Then,
\[
F(P_i)+\eta < U_{\bar{\alpha}}(y_i) + U'^{,-}_{\bar{\alpha}}(y_i)(P_i-y_i);
\]
\item[(IV)] If $F(x)-\bar{\alpha}  <  U_{\bar{\alpha}}(x)$ for all $x\in [m_i,m_{i+1}]$, then let $Q_i\in[0,1]$ be such that
\[
U_{\bar{\alpha}}(y_i) + U'^{,-}_{\bar{\alpha}}(y_i)(Q_i-y_i)
=
U_{\bar{\alpha}}(z_i) + U'^{,+}_{\bar{\alpha}}(z_i)(Q_i-z_i).
\]
Then,
\[
F(Q_i)-\eta > U_{\bar{\alpha}}(z_i) + U'^{,-}_{\bar{\alpha}}(z_i)(Q_i-z_i).
\]
\end{itemize}
Therefore, (c) follows from \eqref{eq:tub_ngbh}, since $U_\alpha$ cannot be affine in the entire interval $[x^-_i(\bar{\alpha}), y_i]$.

We now consider the case where \eqref{eq:assumption_step1} does not hold.
By Lemma \ref{lem:geomtaut}, $U_{\bar{\alpha}}$ is convex in $[x^-_{i-1}(\bar{\alpha}), x^-_i(\bar{\alpha})]$. Therefore, the only other possibility is that
\[
U'^{,-}_{\bar{\alpha}}(x^-_i(\bar{\alpha}))
    = U'^{,-}_{\bar{\alpha}}(x^-_{i+1}(\bar{\alpha})).
\]
In this case, $U_{\bar{\alpha}}$ is affine in $[x^-_i(\bar{\alpha}), x^-_{i+1}(\bar{\alpha})]$.
By Proposition \ref{prop:linfty_stability}, Lemma \ref{lem:geomtaut} (DL), and the choice of $\eta$ in (I) and (II), only two possibilities can occur: either $x^-_i(\alpha)\in [x^-_i(\bar{\alpha}) - \delta, x^-_i(\bar{\alpha})]$ or $C_\alpha^-\cap [x_i^+(\alpha), x_{i+1}^+(\alpha)]=\emptyset$.
 We aim to exclude the second possibility. Assume, by contradiction, that it holds. Then, a symmetry argument allow us to assume \eqref{eq:ineq+1} without loss of generality.  Indeed, if the opposite inequality were true, we could proceed analogously using the function $x\mapsto U_\alpha(1-x)$.
At this stage, we choose to refer the reader to Case~2 of Step~3.1 for the final part of the argument, which is identical, instead of repeating it. Our choice is based on the fact that the strategy used in Step~3 forms a central part of the proof of   Theorem~\ref{thm:contact_set_inclusion}, and for the sake of clarity and readability, we prefer to maintain the logical flow of Step~3 without unnecessary interruptions.

Finally, (d) follows directly from  (c)  and the choice of $\delta$.

Fix $i\in\{1,\dots,k(\alpha)\}$.
Now, just to fix the ideas, assume that 
\begin{equation}\label{eq:contactbelow}
(m_{i-1},m_i)\cap C^+_\alpha =\emptyset.\end{equation}
Then, it is easy to see that $\restr{U_\alpha}{[m_{i-1}, m_i]}$ is the unique solution to
\begin{equation}\label{eq:localization}
\min \bigg\{\! \int_0^1 \sqrt{1 + |v'|^2} \dd x \colon v\in AC(m_{i-1},m_i),\, v \geq F - {\alpha},\  v(m_i)=U_\alpha(m_{i-1}), \ v(m_i)=U_\alpha(m_{i+1})\! \bigg\}.
\end{equation}
Notice that arguing as in \cite[Lemma~2]{Gra07}, this minimization problem has a unique solution.

\textbf{Step 2.} We claim that for each $\alpha \in \big(\bar\alpha - \beta(\bar\alpha), \bar\alpha\big]$, the unique solution to \eqref{eq:localization}, and hence $\restr{U_\alpha}{[m_{i-1},m_i]}$, is the concave envelope of the function
$G_\alpha:[m_{i-1}, m_i] \to \R$ defined as
\[
G_\alpha(x) = \begin{cases}U_\alpha(m_i) &\text{ if }x=m_{i-1},\\
F(x)-\alpha & \text{ if } x \in (m_{i-1}, m_i), \\
U_\alpha(m_{i+1}) &\text{ if }x=m_i.
\end{cases}
\]
For clarity, the proof of this fact will be done in two sub-steps.

\emph{Step 2.1.} First, we show the following. Let $v,w \in AC(m_{i-1},m_i)$ be such that $v(m_{i-1})=w(m_{i-1})$, $v(m_i)=w(m_i)$, and $v\leq w$.
Assume that $v$ is concave. Then, we claim that
\[
\int_{m_{i-1}}^{m_i} \sqrt{1+ |v'|^2} \dd t 
\leq \int_{m_{i-1}}^{m_i} \sqrt{1+ |w'|^2} \dd t.
\]
Under the additional assumption that $v\in W^{2,1}(0,1)\cap C^2(0,1)$,
consider the function $t\mapsto \sqrt{1+|t|^2}$. For $s,t\in\R$, we have that
\begin{align*}\sqrt{1+s^2} +  \frac{s}{\sqrt{1+s^2}} (t-s) &= \frac{1+st}{\sqrt{1+s^2}} = \frac{(1+st)\sqrt{1+t^2}}{\sqrt{(1+s^2)(1+t^2)}}\\ &= \frac{\sqrt{(1+2st+s^2t^2)(1+t^2)}}{\sqrt{1+s^2+t^2+s^2t^2}} \leq \sqrt{1+t^2},\end{align*}
where we have used $2st \leq s^2 + t^2$.
Thus, we get
\begin{align*}
\int_{m_{i-1}}^{m_i} \sqrt{1+ |w'|^2} \dd t
    &\geq \int_{m_{i-1}}^{m_i} \sqrt{1+ |v'|^2} \dd t
        + \int_{m_{i-1}}^{m_i} \frac{v'}{\sqrt{1+ |v'|^2}}(w'-v') \dd t \\
&= \int_{m_{i-1}}^{m_i} \sqrt{1+ |v'|^2} \dd t
    + \left.\frac{v'}{\sqrt{1+ |v'|^2}}(w-v)\right|_{m_{i-1}}^{m_{i}}\\
    &\qquad -\int_{m_{i-1}}^{m_i} \frac{v'' + (v''-1)(v')^2}{(1+|v'|^2)^{3/2}}(w-v) \dd t \\
&= \int_{m_{i-1}}^{m_i} \sqrt{1+ |v'|^2} \dd t
    -\int_{m_{i-1}}^{m_i} \frac{v'' + (v''-1)(v')^2}{(1+|v'|^2)^{3/2}}(w-v) \dd t \\
&\geq \int_{m_{i-1}}^{m_i} \sqrt{1+ |v'|^2} \dd t,
\end{align*}
where the second step follows from integration by parts, and the third by the assumption that $v$ and $w$ have the same boundary conditions.
Finally, the inequality is due to the concavity assumption of $v$, that ensures that $v''\leq 0$ when it exists, together with $w\geq v$.

To conclude in the general case, we use an approximation argument: let $\{v_n\}_n\in W^{2,1}(m_{i-1},m_i)\cap C^2(m_{i-1},m_i)$ and $\{w_n\}_n\in AC(m_{i-1},m_i)\cap C^2(m_{i-1},m_i)$ satisfying $v_n(m_i)=w_n(m_i)=v(m_i)$ and $v_n(m_{i-1})=w_n(m_{i-1})=v(m_{i-1})$ be such that $v_n\to v$ strongly in $W^{2,1}$ and $v_n\to v$ strongly in $W^{1,1}$. Then, by the previous argument, we get that 
\[
\int_{m_{i-1}}^{m_i} \sqrt{1+ |v'_n|^2} \dd t  \leq \int_{m_{i-1}}^{m_i} \sqrt{1+ |w'_n|^2} \dd t
\]
for all $n\in\N$.
Passing to the limit as $n\to\infty$ gives the desired result.

\emph{Step 2.2.} We now conclude as follows. First, we note that in the above argument, we could also require that $v\geq G_\alpha$. Indeed, the approximation step can be carried out without warring about this extra constraint.

Therefore, the conclusion of the previous step can be stated as follows: let $v, w\in AC(m_{i-1},m_i)$ with $v, w \geq F - {\alpha},\  v(m_i)=w(m_i)=U_\alpha(m_i), \ v(m_{i-1})=w(m_{i-1})=U_\alpha(m_{i-1})$.
Assume that $v$ is concave and that $v\leq w$. Then,
\[
\int_{m_{i-1}}^{m_i} \sqrt{1+ |v'|^2} \dd t  \leq \int_{m_{i-1}}^{m_i} \sqrt{1+ |w'|^2} \dd t.
\]
Thus, the solution of the constrained minimization problem \eqref{eq:localization} is given by the smallest concave function that is an admissible competitor. Namely, the concave envelope of $G_\alpha$.

An alternative proof of Step 2 under more restrictive assumptions can be found in \cite{KadTra01}.

\textbf{Step 3.} We now present the core of the proof; to be precise, a monotonicity property of the points $U_\alpha(m_i)$, when we consider  the graph of $F-\bar{\alpha}$ as a vertical reference.
We claim that
\begin{equation}\label{eq:monot_boundary_points}
U_{\alpha}(m_i) - (\bar{\alpha}-\alpha) \leq U_{\bar{\alpha}}(m_i),
\end{equation}
for all $\alpha \in (\bar\alpha - \beta(\bar\alpha), \bar\alpha]$. 

To prove \eqref{eq:monot_boundary_points}, it is convenient to define  the functions
\[
\widetilde{U}_\alpha:= U_\alpha - (\bar{\alpha}-\alpha),\quad\quad
\widetilde{F}_\alpha := F - (\bar{\alpha}-\alpha), \quad\quad \hbox{ for } \alpha<\bar{\alpha}.
\]
 Note that $\widetilde{U}_\alpha\leq U_\alpha$, $U_\alpha= F\pm \bar\alpha$ if and only if $\widetilde{U}_\alpha = \widetilde{F}_\alpha\pm\bar{\alpha}$, $\widetilde{U}_{\bar{\alpha}} = U_{\bar{\alpha}}$, and 
\begin{equation}\label{eq:conditions_U_tilde}
F-\bar{\alpha} = \widetilde{F}_\alpha-\alpha
\leq \widetilde{U}_\alpha
\leq \widetilde{F}_\alpha+\alpha = F+2\alpha-\bar{\alpha}.
\end{equation}

We will prove the validity of \eqref{eq:monot_boundary_points} in two sub-steps.

\textit{Step 3.1}: We claim that
\begin{equation}\label{eq:monot_contact_points}
x^-_i(\alpha)\geq x^-_i(\bar{\alpha}),\quad\quad\quad
x^+_i(\alpha)\leq x^+_i(\bar{\alpha}),
\end{equation}
for all $\alpha \in \big(\bar\alpha - \beta(\bar\alpha), \bar\alpha\big]$.

In case $x^-_i(\alpha)\in\{0,1\}$ or $x^+_i(\alpha)\in\{0,1\}$, this follows directly from the boundary conditions satisfied by each $U_\alpha$, for which we always have equality.
Next, we only detail the argument to prove that $x^-_i(\alpha)\geq x^-_i(\bar{\alpha})$ for all $\alpha \in \big(\bar\alpha - \beta(\bar\alpha), \bar\alpha\big]$. The proof of $x^+_i(\alpha)\leq x^+_i(\bar{\alpha})$ follows by arguing similarly.

Note that by \eqref{eq:contactbelow}, we have that
\[
x^-_i(\bar{\alpha}) < x^+_i(\bar{\alpha}).
\]
We start by making two observations. First, we claim that
\begin{equation}\label{eq:ineq_conc_left}
F(x) - \bar{\alpha} \leq U_{\bar{\alpha}}(x^-_i(\bar{\alpha}))
    + f^{-}(x^-_i(\bar{\alpha}))(x-x^-_i(\bar{\alpha})),
\end{equation}
for all $x\in(x^-_i(\bar{\alpha})-\delta, x^-_i(\bar{\alpha}))$, where $\delta$ is the one we fixed in Step 1.
Indeed, by the choice of $\delta$, we get that $(x^-_i(\bar{\alpha})-\delta, x^-_i(\bar{\alpha}))\subset (0,1) \setminus C^-_{\bar\alpha}$. Thus, thanks to (1) of Lemma \ref{lem:geomtaut}, we get that $U_{\bar\alpha}$ is concave in $x\in(x^-_i(\bar{\alpha})-\delta, x^-_i(\bar{\alpha}))$. Therefore,
\[
F(x)-\bar{\alpha} \leq U_{\bar{\alpha}}(x)
\leq U_{\bar{\alpha}}(x^-_i(\bar{\alpha})) + U'^{,-}_{\bar{\alpha}}(x^-_i(\bar{\alpha}))(x-x^-_i(\bar{\alpha}))
\leq U_{\bar{\alpha}}(x^-_i(\bar{\alpha}))
    + f^{-}(x^-_i(\bar{\alpha}))(x-x^-_i(\bar{\alpha})),
\]
where last inequality follows from (DR) of Theorem \ref{thm:optcond}.
By contradiction, assume that there exists $\alpha\in(\bar{\alpha}-\beta(\bar{\alpha}),\bar{\alpha}]$ such that $x^-_i(\alpha)<x^-_i(\bar{\alpha})$. Then, as we prove next, this implies  that
\begin{equation}\label{eq:ineq+1}
{U}'^{,+}_\alpha(x^-_i(\bar\alpha)) \geq
    U'^{,+}_{\bar{\alpha}}(x^-_i(\bar{\alpha})).
\end{equation}
Indeed, set
\begin{equation}\label{eq:ymin+}
	y_i^+:=\min \{ x^+_i(\alpha), x^+_i(\bar{\alpha}) \},
\end{equation}
 and observe that Lemma~\ref{lem:geomtaut} yields that $U_\alpha$ is affine in $[x_i^-(\alpha),y_i^+]$ and $U_{\bar\alpha}$ is affine in $[x_i^-(\bar\alpha),y_i^+]$.  Hence,
\begin{align}
	& \frac{U_\alpha(x_i^-(\bar\alpha))- U_\alpha(x_i^-(\alpha))}{x_i^-(\bar\alpha) - x_i^-(\alpha)} = U'^{,-}_{\alpha}(x_i^-(\bar\alpha)) = U'^{,+}_{\alpha}(x_i^-(\bar\alpha)) = \frac{U_\alpha(y^+_i)  - U_\alpha(x_i^-(\bar\alpha))}{ y^+_i - x_i^-(\alpha) } ,&\label{eq:aff1}\\
	&	U'^{,+}_{\bar\alpha}(x_i^-(\bar\alpha)) = \frac{U_{\bar\alpha}(y^+_i)  - U_{\bar\alpha}(x_i^-(\bar\alpha))}{ y^+_i - x_i^-(\alpha) }. &\label{eq:aff2}
\end{align}
Then,
\begin{flalign*}
&&   {U}'^{,+}_\alpha(x^-_i(\bar{\alpha})) = \widetilde{U}'^{,+}_\alpha(x^-_i(\bar{\alpha}))
    &=\frac{\widetilde{U}_\alpha(x^-_i(\bar{\alpha}))
        - \widetilde{U}_\alpha(x^-_i(\alpha))}{x^-_i(\bar{\alpha}) - x^-_i(\alpha)}
        && (\text{by \eqref{eq:aff1} and definition of $\widetilde{U}_\alpha$} ) \\
&& &\geq \frac{F(x^-_i(\bar{\alpha})) - \bar{\alpha}
    - \widetilde{U}_\alpha(x^-_i(\alpha))}{x^-_i(\bar{\alpha}) - x^-_i(\alpha)}
    && (\text{by } \eqref{eq:conditions_U_tilde}) \\
&& &=\frac{{U}_{\bar{\alpha}}(x^-_i(\bar{\alpha}))
    -  (F(x^-_i({\alpha})) - \bar{\alpha})}{x^-_i(\bar{\alpha}) - x^-_i(\alpha)}
    && (\text{definition of } x^-_i(\bar{\alpha}), x^-_i(\alpha) ) \\
&& &\geq f^{-}(x^-_i(\bar{\alpha}))
  && (\text{by  \eqref{eq:ineq_conc_left} at $x=x^-_i(\alpha)$}) \\
&& &\geq U'^{,+}_{\bar{\alpha}}(x^-_i(\bar{\alpha})) && (\text{by (DR) of Theorem ~\ref{thm:optcond}} ),
\end{flalign*}
which proves \eqref{eq:ineq+1}. 

\medskip
\textit{Case 1: $x_i^+(\bar\alpha) \leq x_i^+(\alpha) $.} Recalling \eqref{eq:ymin+},  we have in this case that $y^+_i=x_i^+(\bar\alpha)$. Then, using \eqref{eq:ineq+1},  \eqref{eq:aff1}, and \eqref{eq:aff2}, we deduce that $U_{\alpha}(x_i^+(\bar\alpha))  - U_{\alpha}(x_i^-(\bar\alpha)) \geq U_{\bar\alpha}(x_i^+(\bar\alpha))  - U_{\bar\alpha}(x_i^-(\bar\alpha))$ 
or, equivalently,
\begin{equation}\label{eq:ineq+2}
	U_{\alpha}(x_i^+(\bar\alpha))  - U_{\bar\alpha}(x_i^+(\bar\alpha)) \geq  U_{\alpha}(x_i^-(\bar\alpha))  - U_{\bar\alpha}(x_i^-(\bar\alpha)).
\end{equation}
To conclude, we use definition of  $x^\pm_i(\bar{\alpha})$ and $x^\pm_i(\alpha) $ and the admissibility conditions of $U_\alpha$ and $U_{\bar\alpha}$ to get that 
\begin{equation*}
	U_{\alpha}(x_i^-(\bar\alpha))  - U_{\bar\alpha}(x_i^-(\bar\alpha)) \geq F(x_i^-(\bar\alpha)) - \alpha -  (F(x_i^-(\bar\alpha)) - \bar\alpha ) =  \bar\alpha - \alpha,
\end{equation*}
while
\begin{equation*}
		U_{\alpha}(x_i^+(\bar\alpha))  - U_{\bar\alpha}(x_i^+(\bar\alpha)) \leq F(x^+_i(\bar \alpha))  + \alpha - (F(x^+_i(\bar\alpha))  + \bar\alpha)  = \alpha- \bar\alpha 
\end{equation*}
which contradicts \eqref{eq:ineq+2} because $\alpha<\bar\alpha$. 

\medskip
\textit{Case 2: $x_i^+(\bar\alpha)> x_i^+(\alpha) $.} In this case,  $y^+_i =x_i^+(\alpha)$ and $x_i^+(\alpha) \in ( x_i^-(\alpha), x_i^+(\bar\alpha) )$. Because the latter interval is contained in the set $(0,1)\setminus (C^-_{\bar \alpha} \cup C^+_{\bar \alpha})$, it follows that
\begin{equation}\label{eq:case2a}
	U_{\bar\alpha} (x_i^+(\alpha))  < F(x_i^+(\alpha))  + \bar\alpha.
\end{equation}
On the other hand, $U_{\alpha}$ is convex in the interval $[ x_i^+(\alpha), x_i^+(\bar\alpha) ]$ by Lemma~\ref{lem:geomtaut}; thus,
\begin{equation}\label{eq:case2b}
	\begin{aligned}
	 	U_{\alpha} (x_i^+(\bar \alpha)) &\geq 	U_{\alpha} (x_i^+(\alpha)) + {U}'^{,+}_\alpha(x^+_i(\alpha)) (x_i^+(\bar\alpha) - x_i^+(\alpha) )\\
	 	&\geq U_{\alpha} (x_i^+(\alpha)) + {U}'^{,+}_{\bar\alpha}(x^+_i(\alpha)) (x_i^+(\bar\alpha) - x_i^+(\alpha) ),
	\end{aligned}
\end{equation}
where, in the last estimate, we have used condition (UL) in Theorem ~\ref{thm:optcond} and the linearity of $U_\alpha$ in $[x^-_i(\bar\alpha), x^+_i(\alpha)]$ to say that 
\[{U}'^{,+}_\alpha(x^+_i(\alpha)) \geq {U}'^{,-}_\alpha(x^+_i(\alpha))={U}'^{,+}_\alpha(x^-_i(\bar\alpha)),\]
 while \eqref{eq:ineq+1}  and the linearity of $U_{\bar\alpha}$ in $[x^-_i(\bar\alpha), x^+_i(\bar\alpha)]$ yield 
 \[{U}'^{,+}_\alpha(x^-_i(\bar\alpha)) \geq  U'^{,+}_{\bar{\alpha}}(x^-_i(\bar{\alpha})) = {U}'^{,+}_{\bar\alpha}(x^+_i(\alpha)).\]
 Consequently,  using the definition of  $x^+_i(\bar{\alpha})$ and $x^+_i(\alpha) $ and the admissibility conditions of $U_\alpha$ and $U_{\bar\alpha}$, we conclude from \eqref{eq:case2a} and \eqref{eq:case2b} that
 \begin{equation*}
 		\begin{aligned}
 	 		F(x_i^+(\bar\alpha))  + \bar\alpha & \geq U_{\alpha} (x_i^+(\bar \alpha)) -\alpha+ \bar\alpha\\
 	 		& \geq U_{\alpha} (x_i^+(\alpha)) + {U}'^{,+}_{\bar\alpha}(x^+_i(\alpha)) (x_i^+(\bar\alpha) - x_i^+(\alpha) ) -\alpha+ \bar\alpha  \\
 	 		& = F(x_i^+(\alpha))  + \bar\alpha + {U}'^{,+}_{\bar\alpha}(x^+_i(\alpha)) (x_i^+(\bar\alpha) - x_i^+(\alpha) ) \\
 	 		& > U_{\bar\alpha} (x_i^+(\alpha)) + {U}'^{,+}_{\bar\alpha}(x^+_i(\alpha)) (x_i^+(\bar\alpha) - x_i^+(\alpha) ) \\
 	 		&=  U_{\bar\alpha} (x_i^+(\bar\alpha)) = F(x_i^+(\bar\alpha))  + \bar\alpha,
 	   \end{aligned}
 \end{equation*}
where the second to last equality follows from the linearity of $U_{\bar\alpha}$ in $[x^-_i(\bar\alpha), x^+_i(\bar\alpha)]$, which yields a contradiction.

This concludes the proof of \eqref{eq:monot_contact_points}.

\textit{Step 3.2.} We now prove that
\[
\widetilde{U}_\alpha \leq U_{\bar{\alpha}}\quad\text{ in } [x^-_i(\alpha), x^+_i(\alpha)].
\]
By Step 3.1, both functions are affine in $[x^-_i(\alpha), x^+_i(\alpha)]$. Thus, it suffices  show that 
\begin{equation}\label{eq:ineq_values_1}
	\widetilde{U}_\alpha(x^-_i(\alpha))\leq U_{\bar{\alpha}}(x^-_i(\alpha))
\end{equation}
and
\begin{equation}\label{eq:ineq_values_2}
	\widetilde{U}_\alpha(x^+_i(\alpha))\leq U_{\bar{\alpha}}(x^+_i(\alpha)).
\end{equation}
The estimate \eqref{eq:ineq_values_1} follows from the fact that
\[
\widetilde{U}_\alpha(x^-_i(\alpha))
= \widetilde{F}(x^-_i(\alpha))-\alpha
= F(x^-_i(\alpha))-\bar{\alpha}
\leq U_{\bar{\alpha}}(x^-_i(\alpha)).
\]
To prove \eqref{eq:ineq_values_2}, we argue as follows. In view of \eqref{eq:monot_contact_points}, we can reverse the roles of $U_\alpha$ and $U_{\bar\alpha}$ in \eqref{eq:ineq+1} to get 
\begin{equation*}
	{U}'^{,+}_{\bar{\alpha}}(x^-_i(\alpha)) \geq
	U'^{,+}_{{\alpha}}(x^-_i(\bar{\alpha})) = \widetilde{U}'^{,+}_{{\alpha}}(x^-_i(\bar{\alpha})),
\end{equation*}
which, by linearity, implies that $U_{\bar{\alpha}} (x_i^+(\alpha)) - U_{\bar{\alpha}} (x_i^-(\alpha)) \geq \widetilde{U}_{{\alpha}} (x_i^+(\alpha)) - \widetilde{U}_{{\alpha}} (x_i^-(\alpha)) $. Thus,
\begin{equation*}
U_{\bar{\alpha}} (x_i^+(\alpha)) - \widetilde{U}_{{\alpha}} (x_i^+(\alpha))  \geq  U_{\bar{\alpha}} (x_i^-(\alpha))  - \widetilde{U}_{{\alpha}} (x_i^-(\alpha)) \geq 0,
\end{equation*}
where we used \eqref{eq:ineq_values_1} in the last estimate. Hence, \eqref{eq:ineq_values_2} also holds.

If instead of \eqref{eq:contactbelow}, $i$ is such that
\begin{equation}\label{eq:contactabove}
(m_{i-1},m_i)\cap C_\alpha^- = \emptyset,
\end{equation}
by using the same symmetry argument as in Theorem \ref{thm:optcond}, we conclude the validity of \eqref{eq:monot_boundary_points} also in this case.
In particular, we obtain that
\begin{equation}\label{eq:ineq_U_alpha}
U_\alpha(m_i) - (\bar{\alpha}-\alpha) \leq U_{\bar{\alpha}}(m_i),
\end{equation}
for all $i\in\{1,\dots,k(\bar{\alpha})\}$, and for all $\alpha\in[\bar{\alpha}-\beta(\bar{\alpha}), \bar{\alpha}]$.

\textbf{Step 4.}
We now conclude as follows.
To fix the ideas, let's assume that \eqref{eq:contactbelow} is in force.
For each $\alpha\in(\bar{\alpha}-\beta(\bar{\alpha}), \bar{\alpha}]$,  Step~2 yields that $\restr{\widetilde{U}_\alpha}{[m_{i-1}, m_i]}$ is the concave envelope of  the function $\widetilde{G}_\alpha:[m_{i-1},m_i]\to\R$ defined by
\begin{equation}\label{eq:Gtilde}
	\widetilde{G}_{{\alpha}}(x) :=G_\alpha - (\bar{\alpha}-\alpha) =
	\begin{cases}
		\widetilde{U}_\alpha (m_{i-1}) & \hbox{if } x=m_{i-1},\\
		F(x) - \bar{\alpha} & \hbox{if } x\in (m_{i-1}, m_i),\\
		\widetilde{U}_\alpha (m_i) & \hbox{if } x=m_i.\\
	\end{cases} 
\end{equation}

On the other hand, by \eqref{eq:ineq_U_alpha}, we have for all $\alpha\in[\bar{\alpha}-\beta(\bar{\alpha}), \bar{\alpha}]$  that
\begin{equation}\label{eq:ineq_bp}
	\widetilde{U}_\alpha(m_{i-1}) \leq U_{\bar{\alpha}}(m_{i-1}),\quad\quad\quad
	\widetilde{U}_\alpha(m_i) \leq U_{\bar{\alpha}}(m_i).
\end{equation}
Thus,
\begin{equation}\label{eq:ineq_G}
	\widetilde{G}_\alpha \leq {G}_{\bar{\alpha}},
\end{equation}
which implies that $\widetilde{U}_\alpha \leq U_{\bar{\alpha}} $ by taking the concave envelope in the preceding estimate. Consequently,
\begin{equation}\label{eq:UUG}
	\widetilde{G}_\alpha \leq \widetilde{U}_\alpha \leq U_{\bar{\alpha}}.
\end{equation}
Hence, if $x\in C^-_{\bar{\alpha}}$, we get from \eqref{eq:Gtilde} that
\[
U_{\bar{\alpha}}(x) = F(x)-\bar{\alpha}= G_{\bar{\alpha}}(x)  = \widetilde{G}_\alpha(x).
\]
Therefore, using \eqref{eq:UUG}, we obtain that $\widetilde{U}_\alpha(x) =  F(x)-\bar{\alpha}$. Using the definition of $\widetilde{U}_\alpha$, this is equivalent to saying that $x\in C^-_{\alpha}$. Hence, $C^-_{\alpha}\supset C^-_{\bar{\alpha}}$.
If instead of \eqref{eq:contactbelow}, we have \eqref{eq:contactabove}, then  by using a symmetry argument, we get that $C^+_{\alpha}\supset C^+_{\bar{\alpha}}$.
\end{proof} 

We now investigate the relation between the jump set of $u_\alpha$ and that of $f$.

\begin{figure}[!ht]
\begin{tikzpicture}
\begin{axis}[height=0.35\textwidth,width=0.5\textwidth, clip=true,
    axis x line=none,
    axis y line=none,
    xmin=0.25,xmax=0.75,
    ymin=-0.025,ymax=0.5
    ]
\addplot[samples=20, domain=0.35:0.5, black, very thick] {4*x*x*x*x+0.2*(x-0.5)} ;
\addplot[samples=20, domain=0.5:0.75, black, very thick] {0.25-(x-0.55)*(x-0.55)+0.5*(x-0.5)} 
node[anchor=north, pos=0.6, font=\small]{$F\!-\!\alpha$};
\addplot[samples=2, color=OliveGreen, very thick] coordinates {(0.3,0.025) (0.505,0.2575)};
\addplot[samples=2, color=OliveGreen, densely dashed, semithick] coordinates {(0.505,0.2575) (0.605,0.370)}
node[anchor=west, pos=1, font=\small]{$U'^{,-}_\alpha(x)$};
\addplot[samples=2, black, densely dashed, semithick] coordinates {(0.505,0.2575) (0.605,0.4775)}
node[anchor=west, pos=1, font=\small]{$F'^{,-}(x)$};
\addplot[samples=2, color=OliveGreen, densely dashed, semithick] coordinates {(0.405,0.1975) (0.505,0.2575)}
node[anchor=east, pos=0, font=\small]{$U'^{,+}_\alpha(x)$};
\addplot[samples=20, domain=0.5:0.75, color=OliveGreen, very thick] {0.2575-(x-0.55)*(x-0.55)+0.5*(x-0.5)};
\node[color=OliveGreen, font=\small] at (axis cs:0.285,0.08){$U_\alpha$};
\draw[black, thick, dotted] (axis cs:0.5, 0.025) -- (axis cs:0.5,  0.25);
\node[black, font=\small] at (axis cs:0.575,0.0){$x \in J_{u_\alpha} \cap \partial C_\alpha^-$};
\end{axis}
\end{tikzpicture}
\caption{Jumps of $u_\alpha = U_\alpha'$ must be jumps of $f = F'$, even if they happen at the boundary of the contact set, cf. \eqref{eq:jumpinclusion_2} in Proposition \ref{prop:jump-inclusion}.}
\label{eq:jump_f_u}
\end{figure}
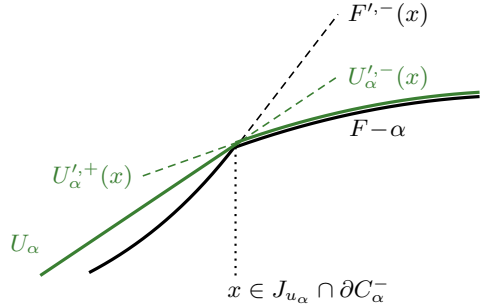

\begin{proposition}\label{prop:jump-inclusion}
Let $f \in L^\infty(0,1)$ be such that the approximate left and right limits in \eqref{eq:F'lim} exist for every $x_0 \in (0,1)$. 
Then, for every $\alpha\in(0,+\infty)$, we have that
\begin{equation}\label{eq:jumpinclusion}
J_{u_\alpha} \subset J_{f} \cap (C_{\alpha}^+\cup C_{\alpha}^-),
\end{equation}
where $J_f$ is (by definition) the set of points where both limits in \eqref{eq:F'lim} exist but are different.
More precisely, we have
\begin{equation}\label{eq:jumpinclusion_1}
J_{u_\alpha} \cap \interior (C_\alpha^+ \cup C_\alpha^-) =  J_{f} \cap \interior
(C_\alpha^+ \cup C_\alpha^-),
\end{equation}
as well as (see Figure \ref{eq:jump_f_u})
\begin{equation}\label{eq:jumpinclusion_2}
J_{u_\alpha} \cap \partial (C_\alpha^+ \cup C_\alpha^-) \subset J_f.
\end{equation}
\end{proposition}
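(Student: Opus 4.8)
The plan is to reduce the statement to two elementary structural facts about the taut string solution $U_\alpha$ — that it is affine off the contact sets, and that near a point of $C_\alpha^-$ it is concave (resp.\ convex near a point of $C_\alpha^+$) while pinned against the obstacle — and to translate everything into one-sided derivatives. So I would begin by recording the dictionary. By Theorem~\ref{thm:gra07} we have $u_\alpha = U_\alpha'$ for the good representatives with $U_\alpha' \in BV(0,1)$, so Remark~\ref{rmk:limU'} shows that the one-sided approximate limits $u_\alpha^\pm(x_0)$ entering the definition of $J_{u_\alpha}$ exist at every $x_0\in(0,1)$ and equal the one-sided derivatives $U'^{,\pm}_\alpha(x_0)$; likewise, since $F(t)=\int_0^t f$, the one-sided difference quotients of $F$ are averages of $f$, so their limits are exactly $f^\pm(x_0)$ from \eqref{eq:F'lim}. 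Thus ``$x_0\in J_{u_\alpha}$'' reads $U'^{,-}_\alpha(x_0)\neq U'^{,+}_\alpha(x_0)$ and ``$x_0\in J_f$'' reads $f^-(x_0)\neq f^+(x_0)$.

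Next I would prove the coarse inclusion $J_{u_\alpha}\subseteq C_\alpha^+\cup C_\alpha^-$: by Lemma~\ref{lem:geomtaut}, $U_\alpha$ is affine on each connected component of the open set $(0,1)\setminus(C_\alpha^+\cup C_\alpha^-)$, so $u_\alpha=U_\alpha'$ is locally constant there and cannot jump. Then, given $x_0\in J_{u_\alpha}$, it belongs to exactly one of $C_\alpha^-$, $C_\alpha^+$, and I would treat the case $x_0\in C_\alpha^-$, the other following by applying the argument to $-U_\alpha$, $-F$, $-f$ (which exchanges the two contact sets and negates $f^\pm$). From $U_\alpha(x_0)=F(x_0)-\alpha$ together with $U_\alpha\geq F-\alpha$, passing one-sided difference quotients to the limit yields $U'^{,-}_\alpha(x_0)\leq f^-(x_0)$ and $U'^{,+}_\alpha(x_0)\geq f^+(x_0)$; and since $x_0\in C_\alpha^-$ is at distance at least $2\alpha/\Lip(F)$ from $C_\alpha^+$ by part~(3) of Lemma~\ref{lem:geomtaut}, $x_0$ sits in a connected component of $(0,1)\setminus C_\alpha^+$ on which $U_\alpha$ is concave by part~(2), whence $U'^{,+}_\alpha(x_0)\leq U'^{,-}_\alpha(x_0)$. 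Concatenating, $f^+(x_0)\leq U'^{,+}_\alpha(x_0)\leq U'^{,-}_\alpha(x_0)\leq f^-(x_0)$ — precisely the inequality chain of Theorem~\ref{thm:optcond}, now seen to hold at \emph{every} contact point — so a jump of $u_\alpha$ (strict inequality in the middle) forces $f^+(x_0)<f^-(x_0)$, i.e.\ $x_0\in J_f$. This proves \eqref{eq:jumpinclusion}.

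For the refinements I would first note that $C_\alpha^+$ and $C_\alpha^-$ are closed and at positive distance (Lemma~\ref{lem:geomtaut}(3)), so $\interior(C_\alpha^+\cup C_\alpha^-)=\interior C_\alpha^+\cup\interior C_\alpha^-$ and $\partial(C_\alpha^+\cup C_\alpha^-)=\partial C_\alpha^+\cup\partial C_\alpha^-$. Then \eqref{eq:jumpinclusion_2} and the inclusion ``$\subseteq$'' in \eqref{eq:jumpinclusion_1} are immediate from \eqref{eq:jumpinclusion}. For the reverse inclusion in \eqref{eq:jumpinclusion_1}, take $x_0\in J_f$ with $x_0\in\interior C_\alpha^-$ (the case $x_0\in\interior C_\alpha^+$ being symmetric): on a whole neighborhood of $x_0$ one has $U_\alpha=F-\alpha$, hence $u_\alpha=U_\alpha'=f$ $\mathcal{L}^1$-a.e.\ there, and since one-sided approximate limits depend only on the a.e.\ equivalence class, $u_\alpha^\pm(x_0)=f^\pm(x_0)$; as $x_0\in J_f$ these differ, so $x_0\in J_{u_\alpha}$.

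The one genuinely delicate point will be the middle inequality $U'^{,+}_\alpha(x_0)\leq U'^{,-}_\alpha(x_0)$ when $x_0$ is a \emph{boundary} point of the contact set — the situation depicted in Figure~\ref{eq:jump_f_u}, where a priori the contact set could oscillate arbitrarily near $x_0$. What saves the argument, and avoids any case analysis of the local boundary structure, is that Lemma~\ref{lem:geomtaut}(3) provides bona fide concavity of $U_\alpha$ on a fixed two-sided neighborhood of \emph{any} point of $C_\alpha^-$, so the optimality-type chain is available there uniformly; the obstacle inequalities and the identification of approximate limits with one-sided derivatives (Remark~\ref{rmk:limU'}) are then routine.
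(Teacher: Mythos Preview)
Your proof is correct and follows the same overall architecture as the paper's (affineness off the contact set, the obstacle inequalities $U_\alpha'^{,-}(x_0)\leq f^-(x_0)$ and $U_\alpha'^{,+}(x_0)\geq f^+(x_0)$ at points of $C_\alpha^-$, and the identification $u_\alpha=f$ a.e.\ on the interior of the contact set). The one substantive difference is how you obtain the middle inequality $U_\alpha'^{,+}(x_0)\leq U_\alpha'^{,-}(x_0)$ at a boundary contact point: the paper invokes case (DL) of Theorem~\ref{thm:optcond}, which is proved by a competitor construction and requires the extra hypothesis that a one-sided interval $(x_0-\delta,x_0)$ lies outside the contact set --- a hypothesis the paper simply assumes ``without loss of generality'' for $x_0\in\partial C_\alpha^-$. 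You instead read the inequality directly off the two-sided concavity of $U_\alpha$ near any point of $C_\alpha^-$ guaranteed by Lemma~\ref{lem:geomtaut}(3). This is more elementary, applies uniformly at \emph{every} point of $C_\alpha^-$ (not just those with a clean one-sided interval in the complement), and sidesteps the need to justify that boundary points of the contact set have such a one-sided structure. The paper's route has the advantage of reusing a result (Theorem~\ref{thm:optcond}) that is needed elsewhere anyway; yours makes the present proposition self-contained modulo Lemma~\ref{lem:geomtaut}.
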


\begin{proof}
We first prove \eqref{eq:jumpinclusion_1}.
We observe that, since \(x_0\in \interior C_\alpha^\pm \), it is possible to find \(\delta_{x_0}>0\) such that \(I_{x_0}:= (x_0-\delta_{x_0}, x_0 + \delta_{x_0}) \, \subset \interior C_\alpha^\pm\); in other words, using the  definition of \(C_\alpha^\pm\), such that
\begin{equation*}
\begin{aligned}
U_\alpha(x) = F(x) \pm \alpha \quad \text{ for all \(x\in I_{x_0}\)}.
\end{aligned}
\end{equation*}
Thus, recalling Remark~\ref{rmk:limU'} and using the definition of
\(F\) in \eqref{eq:defF},
we conclude that%
\begin{equation*}
\begin{aligned}
U_\alpha' = f \quad \Lcal^1\text{-a.e.~in }\, I_{x_0}.
\end{aligned}
\end{equation*}
Consequently, because \(x_0\in \interior C_\alpha^\pm \) was taken arbitrarily, we conclude that  \eqref{eq:jumpinclusion_1} holds,  with \(f\in BV\big(\interior\big(C_\alpha^+ \cup C_\alpha^-\big)\big)\)
by Theorem~\ref{thm:gra07}.

We now prove \eqref{eq:jumpinclusion_2}. Let us assume for a contradiction that there is some $x_0 \in (0,1)$ with
\[
x_0 \in \left( J_{u_\alpha} \cap \partial (C_\alpha^+ \cup C_\alpha^-) \right) \setminus J_f.
\]
Without loss of generality, we can assume that $x_0\in C^-_\alpha$, and that there exists $\delta > 0$ such that $U(x) > F(x)-\alpha$ for all $x \in (x_0-\delta,x_0)$. The proof in the other cases uses a similar strategy.
Thanks to case (DL) of Theorem \ref{thm:optcond}, we have that
\begin{equation}\label{eq:ineq_Uder_alpha}
U'^{,+}_\alpha(x_0) < U'^{,-}_\alpha(x_0),
\end{equation}
where the strict inequality is due to the assumption that $x_0\in J_{u\alpha}$.
Now, since $U\geq F-\alpha$, and $F(x_0)-\alpha=U_\alpha(x_0)$, we get that
\[
\frac{F(x)-F(x_0)}{x-x_0} \geq \frac{U_\alpha(x)-U_\alpha(x_0)}{x-x_0},
\]
for $x<x_0$, and
\[
\frac{F(x)-F(x_0)}{x-x_0} \leq \frac{U_\alpha(x)-U_\alpha(x_0)}{x-x_0}
\]
for $x>x_0$. Passing to the limit as $x\to x_0^-$ and $x\to x_0^+$, respectively, and using the absurd assumption that $x\notin J_f$, we get
\[
U'^{,+}_\alpha(x_0) \geq f(x_0) \geq U'^{,-}_\alpha(x_0),
\]
which is the desired contradiction with \eqref{eq:ineq_Uder_alpha}.

Finally, we show the validity of \eqref{eq:jumpinclusion}. By Theorem~\ref{thm:gra07}, we have that
\begin{equation*}
\begin{aligned}
J_{u_\alpha} = J_{U_\alpha'} 
.\end{aligned}
\end{equation*}
By Lemma~\ref{lem:geomtaut}, \(U'_\alpha\) is  constant on each connected component of the open set $(0,1)\setminus(C^-_\alpha \cup C^+_\alpha)$. Thus,
\begin{equation*}
\begin{aligned}
J_{U_\alpha'} \subset (0,1) \setminus \big((0,1)\setminus(C^-_\alpha \cup C^+_\alpha)\big) =C_\alpha^+ \cup C_\alpha^-.
\end{aligned}
\end{equation*}
Using \eqref{eq:jumpinclusion_1} and \eqref{eq:jumpinclusion_2}, we conclude the proof.
\end{proof}

\begin{remark}
In the case of ROF on higher dimensional domains, the inclusion \eqref{eq:jumpinclusion} can be proved by several different methods (either comparison of level sets seen as generalized surfaces of prescribed mean curvature as in \cite{CaChNo07} or different flavors of inner variations \cite{Val15, ChaLas24}), but in those cases one needs to assume $f \in L^\infty \cap BV$.
\end{remark}

\begin{remark}
We emphasize that the opposite inclusion to \eqref{eq:jumpinclusion_2} does not hold, as can be seen by taking $f = \1_{(1/2,1)}$ and $\alpha > 1/2$, in which case $u \equiv 1/2$.
\end{remark}

We finally conclude the proof by proving that the monotonicity of the contact set implies the monotonicity of the jump set.

\begin{proposition}\label{prop:from-contact-to-jump}
Let $\alpha_1,\alpha_2\in(0,+\infty)$ with $\alpha_1 < \alpha_2$, and assume that
\[
C_{\alpha_1}^+\cup C_{\alpha_1}^- \supset C_{\alpha_2}^+\cup C_{\alpha_2}^-.
\]
Then, \[J_{u_{\alpha_1}}\supset J_{u_{\alpha_2}}.\]
\end{proposition}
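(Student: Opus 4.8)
The plan is to fix $x_0\in J_{u_{\alpha_2}}$ and show $x_0\in J_{u_{\alpha_1}}$, working throughout with the taut string solutions via Theorem~\ref{thm:gra07}. Recall that $u_{\alpha_i}=U'_{\alpha_i}$, and that by Remark~\ref{rmk:limU'} the one-sided derivatives $U'^{,\pm}_{\alpha_i}(x_0)$ exist and coincide with the one-sided approximate limits $u_{\alpha_i}^\pm(x_0)$; in particular $x_0\in J_{u_{\alpha_i}}$ if and only if $U'^{,-}_{\alpha_i}(x_0)\neq U'^{,+}_{\alpha_i}(x_0)$. By Proposition~\ref{prop:jump-inclusion} we have $x_0\in J_f\cap(C^+_{\alpha_2}\cup C^-_{\alpha_2})$, and by hypothesis $x_0\in C^+_{\alpha_1}\cup C^-_{\alpha_1}$. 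Since $x_0\in J_f$, $f^-(x_0)\neq f^+(x_0)$; replacing $U_\alpha$ by $-U_\alpha$ (equivalently $f$ by $-f$), which interchanges $C^+_\alpha$ with $C^-_\alpha$, reverses the sign of $f^--f^+$, and leaves Problems~\ref{prob:rof} and~\ref{prob:taut}, all jump sets, and the hypothesis unchanged, I may assume from now on that $f^-(x_0)>f^+(x_0)$.

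\textbf{Locating $x_0$ on the same obstacle.} First I would show that any $x\in C^+_\alpha$ (for any $\alpha>0$) satisfies $f^-(x)\le f^+(x)$. Indeed, $U_\alpha\le F+\alpha$ with equality at $x$, so comparing difference quotients and letting them tend to $x$ from either side (using that $F$ is one-sided differentiable with $F'^{,\pm}(x)=f^\pm(x)$) gives $U'^{,-}_\alpha(x)\ge f^-(x)$ and $U'^{,+}_\alpha(x)\le f^+(x)$, while Lemma~\ref{lem:geomtaut}(3) makes $U_\alpha$ convex near $x$, hence $U'^{,-}_\alpha(x)\le U'^{,+}_\alpha(x)$; the three inequalities force $f^-(x)\le f^+(x)$. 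With our normalisation $f^-(x_0)>f^+(x_0)$ this excludes $x_0\in C^+_{\alpha_1}$ and $x_0\in C^+_{\alpha_2}$, so
\[x_0\in C^-_{\alpha_1}\cap C^-_{\alpha_2}.\]

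\textbf{Core of the argument.} Consider the Lipschitz function $h:=U_{\alpha_1}-U_{\alpha_2}$ on $[0,1]$. Since $x_0\in C^-_{\alpha_1}\cap C^-_{\alpha_2}$,
\[h(x_0)=\big(F(x_0)-\alpha_1\big)-\big(F(x_0)-\alpha_2\big)=\alpha_2-\alpha_1,\]
and by the comparison $\widetilde U_{\alpha_1}:=U_{\alpha_1}-(\alpha_2-\alpha_1)\le U_{\alpha_2}$ on $(0,1)$ --- which is inequality \eqref{eq:UUG} from the proof of Theorem~\ref{thm:contact_set_inclusion}, valid for $\alpha_1$ in a left-neighborhood of $\alpha_2$ and propagated to all $0<\alpha_1<\alpha_2$ by the maximality argument used there for \eqref{eq:monCalphas} --- we obtain $h\le\alpha_2-\alpha_1$ on $[0,1]$. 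Hence $x_0$ is a global maximum of $h$. Because $U_{\alpha_1}$ and $U_{\alpha_2}$ possess one-sided derivatives everywhere (Remark~\ref{rmk:limU'}), so does $h$, and the maximum condition $h'^{,-}(x_0)\ge 0\ge h'^{,+}(x_0)$ reads
\[U'^{,-}_{\alpha_1}(x_0)\ge U'^{,-}_{\alpha_2}(x_0)\qquad\text{and}\qquad U'^{,+}_{\alpha_1}(x_0)\le U'^{,+}_{\alpha_2}(x_0).\]
Finally, $x_0\in C^-_{\alpha_2}$ makes $U_{\alpha_2}$ concave on a neighborhood of $x_0$ (Lemma~\ref{lem:geomtaut}(3)), so $U'^{,-}_{\alpha_2}(x_0)\ge U'^{,+}_{\alpha_2}(x_0)$, and this inequality is \emph{strict} because $x_0\in J_{u_{\alpha_2}}$. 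Chaining the three inequalities yields $U'^{,-}_{\alpha_1}(x_0)>U'^{,+}_{\alpha_1}(x_0)$, hence $x_0\in J_{u_{\alpha_1}}$, proving $J_{u_{\alpha_2}}\subseteq J_{u_{\alpha_1}}$.

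\textbf{Where the difficulty lies.} I expect two points to require care. The first is the location step: it is exactly the convexity/concavity dichotomy of Lemma~\ref{lem:geomtaut} together with the obstacle constraints that forces $x_0$ onto the \emph{same} obstacle for both parameters, which is what makes $x_0$ the (global) maximiser of $h$ with value $\alpha_2-\alpha_1$; without this, $x_0$ could a priori fail to be a contact point for $\alpha_1$ at all. The second, and more substantial, is that the argument genuinely uses the quantitative parameter-monotonicity $U_{\alpha_1}-U_{\alpha_2}\le\alpha_2-\alpha_1$, not merely the contact-set inclusion: at a point of $C^+_{\alpha_1}$ that is contact-free for $\alpha_2$ the difference could in principle be as large as $\alpha_1+\alpha_2$, so the inclusion alone does not suffice. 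This estimate is however available from the concave-envelope representation used to prove Theorem~\ref{thm:contact_set_inclusion}, and correctly isolating and citing it is the one ingredient the proof rests on beyond Proposition~\ref{prop:jump-inclusion}, Lemma~\ref{lem:geomtaut}, Remark~\ref{rmk:limU'}, and Theorem~\ref{thm:gra07}.
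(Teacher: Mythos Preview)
Your argument is correct in outline and is genuinely different from the paper's proof, which proceeds by a lengthy case analysis (interior versus boundary of $C_{\alpha_1}^\pm$) followed by explicit competitor constructions, with further sub-cases depending on whether the affine piece of $U_{\alpha_1}$ to the right of $x_0$ terminates at $1$, at $C^-_{\alpha_1}$, or at $C^+_{\alpha_1}$. Your route is considerably shorter: the ``location'' observation that $f^-(x_0)>f^+(x_0)$ forces $x_0\in C^-_{\alpha_1}\cap C^-_{\alpha_2}$ is a clean lemma the paper does not isolate, and once you know $h=U_{\alpha_1}-U_{\alpha_2}$ attains the value $\alpha_2-\alpha_1$ at $x_0$ and is bounded above by that value, the one-sided derivative comparison drops out in one line, whereas the paper effectively reproves a local form of this bound inside each sub-case through its competitors. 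Your approach also gives the amplitude monotonicity of Proposition~\ref{prop:monot_ampl} for free, since you obtain $U'^{,-}_{\alpha_1}(x_0)\ge U'^{,-}_{\alpha_2}(x_0)>U'^{,+}_{\alpha_2}(x_0)\ge U'^{,+}_{\alpha_1}(x_0)$.

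The one point that needs tightening is your justification of the comparison $U_{\alpha_1}-(\alpha_2-\alpha_1)\le U_{\alpha_2}$. Inequality~\eqref{eq:UUG} in the paper is proved only on the ``contact-below'' intervals $[m_{i-1},m_i]$ and only for $\alpha$ in a left-neighbourhood of $\bar\alpha$; the propagation you sketch is delicate because these intervals (and the midpoints $m_i$) themselves depend on the parameter, so the transitivity is not immediate. A cleaner and self-contained proof of the global estimate uses the universal minimality of Theorem~\ref{thm:uni-min} with $c(t)=t^2$: set $W:=\min\big(U_{\alpha_1},\,U_{\alpha_2}+(\alpha_2-\alpha_1)\big)$ and $V:=\max\big(U_{\alpha_1}-(\alpha_2-\alpha_1),\,U_{\alpha_2}\big)$, check $W\in\mathcal A_{\alpha_1}$ and $V\in\mathcal A_{\alpha_2}$, and use the lattice identity $\int|W'|^2+\int|V'|^2=\int|U_{\alpha_1}'|^2+\int|U_{\alpha_2}'|^2$ together with minimality and uniqueness to conclude $W=U_{\alpha_1}$, i.e.\ $U_{\alpha_1}\le U_{\alpha_2}+(\alpha_2-\alpha_1)$ on all of $[0,1]$. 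With this replacement your proof is complete.
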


\begin{proof}
Let $x\in J_{u_{\alpha_2}}$. By \eqref{eq:jumpinclusion} of Proposition \ref{prop:jump-inclusion}, we get that $x\in J_f\cap (C_{\alpha_2}^+\cup C_{\alpha_2}^-)$. Thus, by our assumption, we infer that $x\in J_f\cap (C_{\alpha_1}^+\cup C_{\alpha_1}^-)$.
First, consider the case where $x\in J_f\cap \interior (C_{\alpha_1}^+\cup C_{\alpha_1}^-)$. By using \eqref{eq:jumpinclusion_1}, we obtain that $x\in J_{u_{\alpha_1}}$.

Now, consider the case where $x\in J_f\cap \partial (C_{\alpha_1}^+\cup C_{\alpha_1}^-)$. Assume $x\in J_f\cap \partial C_{\alpha_1}^-$. In the case $x\in J_f\cap \partial C_{\alpha_1}^+$, the conclusion follows by using a similar argument.
We argue by contradiction, namely we assume that $x\notin J_{u_{\alpha_1}}$. This yields that
\[
U'^{,+}_{{\alpha_1}}(x)=U'^{,-}_{{\alpha_1}}(x).
\]
We claim that one the following options holds:
\begin{equation}\label{eq:option1}
U'^{,+}_{\alpha_1}(x) > U'^{,+}_{\alpha_2}(x),
\end{equation}
or
\begin{equation}\label{eq:option2}
U'^{,-}_{\alpha_1}(x) > U'^{,-}_{\alpha_2}(x).
\end{equation}
Indeed, note that we must have
\begin{equation}\label{eq:optimality_alpha_2}
U'^{,+}_{\alpha_2}(x) < U'^{,-}_{\alpha_2}(x),
\end{equation}
where the strict inequality is due to the fact that $x\in J_{u_{\alpha_2}}$.
Thus, if \eqref{eq:option2} does not hold, than  $U'^{,-}_{\alpha_2}(x)\leq U'^{,-}_{\alpha_1}(x)$. Therefore, from \eqref{eq:optimality_alpha_2} we conclude that \eqref{eq:option1} is in force.

Thus, assume we are in the case where \eqref{eq:option1} holds. The case where \eqref{eq:option2} holds is treated with similar arguments.
We claim that there exists $x_{\alpha_1}>x$ such that $U_{\alpha_1}$ is affine in $[x,x_{\alpha_1}]$. Indeed, since
\[
U'^{,+}_{\alpha_1}(x) > U'^{,+}_{\alpha_2}(x), \quad\quad
U_{\alpha_1}(x)=F(x)-\alpha_1,\quad\quad
U_{\alpha_2}(x)=F(x)-\alpha_2,
\]
we obtain that there exists $x_0>x$ such that
\begin{equation}\label{eq:U_alpha1_affine}
U_{\alpha_1}(y) - (\alpha_2-\alpha_1) > U_{\alpha_2}(y),
\end{equation}
for all $y\in(x,x_0)$. Since $\alpha_1>0$, there exists $\delta>0$ such that $U_{\alpha_1}(y)\notin C^+_{\alpha_1}$ for $y\in[x,x+\delta]$. This proves the claim.

We can thus assume that $x_{\alpha_1}>x$ is the largest value for which $U_{\alpha_1}$ is affine in $[x,x_{\alpha_1}]$.
We then have three possible cases.

\begin{figure}[!ht]
\begin{tikzpicture}
\begin{axis}[height=0.4\textwidth,width=0.65\textwidth, clip=true,
    axis x line=none,
    axis y line=none,
    xmin=0.365,xmax=0.75,
    ymin=0.115,ymax=0.315
    ]
\addplot[samples=20, domain=0.485:0.5, black, very thick] {4*x*x*x*x+0.5*(x-0.5)};
\addplot[samples=20, domain=0.5:0.7, black, very thick] {4*(1-x)*(1-x)*(1-x)*(1-x)-0.9*((1-x)-0.5)} 
node[anchor=west, pos=1, font=\footnotesize]{$F\!-\!{\alpha_1}$};
\addplot[samples=20, domain=0.485:0.5, gray, very thick] {-0.025+4*x*x*x*x+0.5*(x-0.5)} ;
\addplot[samples=20, domain=0.5:0.7, gray, very thick] {-0.025+4*(1-x)*(1-x)*(1-x)*(1-x)-0.9*((1-x)-0.5)}
node[anchor=west, pos=1, font=\footnotesize]{$F\!-\!{\alpha_2}$};
\addplot[samples=20, domain=0.485:0.5002, black, very thick] {0.06+4*x*x*x*x+0.5*(x-0.5)};
\addplot[samples=20, domain=0.5:0.7, black, very thick] {0.06+4*(1-x)*(1-x)*(1-x)*(1-x)-0.9*((1-x)-0.5)}
node[anchor=west, pos=1, font=\footnotesize]{$F\!+\!{\alpha_1}$};
\addplot[samples=2, color=OliveGreen, very thick] coordinates {(0.44,0.2524) (0.7,0.2424)}
node[anchor=south, pos=0, font=\footnotesize]{$U_{\alpha_1}$};
\addplot[samples=2, color=RoyalBlue, very thick] coordinates {(0.44,0.21) (0.50,0.225)}
node[anchor=north, pos=0.0, font=\footnotesize]{$U_{\alpha_2}$};
\addplot[samples=50, domain=0.5:0.7, color=RoyalBlue, very thick] {0.25-0.04*sin(4*pi*deg(25*(-x+0.7)*(-x+0.7)*(-x+0.7)))-0.04*(x-0.5)};
\addplot[samples=2, color=Maroon, very thick] coordinates {(0.4,0.229) (0.7,0.2174)}
node[anchor=south, pos=0.13, font=\footnotesize]{$U_{\alpha_1}\!-\!(\alpha_2\!-\!\alpha_1)$};
\draw[black, thick, dotted] (axis cs:0.5, 0.135) -- (axis cs:0.5, 0.25);
\node[black, font=\small] at (axis cs:0.462,0.125){$J_{u_{\alpha_2}}\!\!\setminus\! J_{u_{\alpha_1}} \!\ni\! x $};
\draw[black, thick, dotted] (axis cs:0.572, 0.135) -- (axis cs:0.572, 0.225);
\node[black, font=\small] at (axis cs:0.574,0.125){$x_0$};
\draw[black, thick, dotted] (axis cs:0.7, 0.135) -- (axis cs:0.7, 0.2424);
\node[black, font=\small] at (axis cs:0.716,0.125){$x_{\alpha_1}\!=\!1$};
\end{axis}
\end{tikzpicture}
\caption{Case 1 in the proof of Proposition \ref{prop:from-contact-to-jump}, a competitor is built by replacing $U_{\alpha_2}$ by $U_{\alpha_1}-(\alpha_2-\alpha_1)$ on $[x,x_0]$.}
\label{fig:straightcompetitor}
\end{figure}
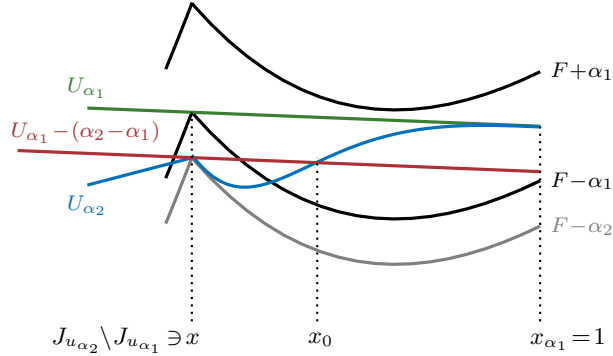

\emph{Case 1: $x_{\alpha_1}=1$}. Using \eqref{eq:U_alpha1_affine}, together with the fact that $U_{\alpha_1}(1)=F(1)=U_{\alpha_2}(1)$, we obtain that there exists $x_0\in(x,1]$ such that
\[
U_{\alpha_1}(x_0)-(\alpha_2-\alpha_1)=U_{\alpha_2}(x_0),
\]
and
\[
F(y)-\alpha_2\leq U_{\alpha_2}(y) < U_{\alpha_1}(y)-(\alpha_2-\alpha_1),
\]
for all $y\in[x,x_0)$. Note that, by \eqref{eq:option1} and the above, $U_{\alpha_2}$ cannot be affine in the entire interval $[x,x_0]$. But this then contradicts the minimality of $U_{\alpha_2}$, since in that case the graph of the function
\[
v(y):=
\left\{
\begin{array}{ll}
U_{\alpha_1}(y)-(\alpha_2-\alpha_1) & \text{ if } y\in[x,x_0],\\
U_{\alpha_2}(y) & \text{ else},
\end{array}
\right.
\]
has strictly less length than that of $U_{\alpha_2}$, as illustrated in Figure \ref{fig:straightcompetitor}. Note that the function $v$ is such that $v(0)=F(0)$, $v(1)=F(1)$, and $\|v-F\|_\infty\leq \alpha_2$. This gives the desired contradiction.

\emph{Case 2: $x_{\alpha_1}\in C^-_{\alpha_1}$}. We claim that there exists $x_0\in(x,1]$ such that
\[
U_{\alpha_1}(x_0)-(\alpha_2-\alpha_1) = U_{\alpha_2}(x_0),
\]
and
\[
F(y)-\alpha_2\leq U_{\alpha_2}(y) < U_{\alpha_1}(y)-(\alpha_2-\alpha_1),
\]
for all $y\in[x,x_0)$. This follows from the continuity of $F$, $U_{\alpha_1}$, and $U_{\alpha_2}$ together with the fact that $U_{\alpha_1}(x_{\alpha_1}))-(\alpha_2-\alpha_1)=F(x_{\alpha_1})-\alpha_2$. We then conclude with a similar argument to that employed in Case 1.

\emph{Case 3: $x_{\alpha_1}\in C^+_{\alpha_1}$}. Let $\bar{\alpha}>0$ be such that
\begin{equation}\label{eq:bar_alpha}
\|U_\alpha-U_{\alpha_1}\|_\infty < \frac{\alpha_1}{2},
\end{equation}
for all $\alpha\in(\alpha_1,\bar{\alpha})$.
Take $\alpha\in(\alpha_1,\bar{\alpha})$. Thanks to Proposition \ref{prop:linfty_stability}, we know that there exist $\delta,\mu\in\R$ such that $U_\alpha$ is affine in $[x+\delta,x_{\alpha_1}+\mu]$. We claim that $\mu>0$. Indeed, since $F+\alpha>F+\alpha_1$, we obtain that $U_\alpha$ cannot touch the graph of $F+\alpha$ in the interval $[x+\delta,x_{\alpha_1}+\mu]$. Moreover, thanks to \eqref{eq:bar_alpha}, we have that $U_\alpha>F-\alpha$ in the same interval. This proves the claim.

Let $x_\alpha>x$ be the largest value for which $U_\alpha$ is affine in $[x,x_\alpha]$.
we now have three cases to consider.

\emph{Case 3.1: $x_\alpha\in C^+_{\alpha}$.} With a similar argument to that used in case 2, with the role of $U_{\alpha_1}$ played by $U_\alpha$, and that of $U_{\alpha_2}$ played by $U_{\alpha_1}$, we conclude.

\emph{Case 3.2: $x_\alpha=1$.} In this case we use an argument similar to those of case 1.

\emph{Case 3.3: $x_\alpha\in C^-_\alpha$.} We claim that, up to taking $\alpha$ sufficiently close to $\alpha_1$, this is not possible. Let's argue by contradiction. Assume there exists $(\alpha_n)_{n\in\N}$ with $\alpha_n>\alpha_1$ and $\alpha_n\to\alpha$ when $n\to\infty$ such that $x_{\alpha_n}$, the largest value where $U_{\alpha_n}$ is affine in $[x,x_{\alpha_n}]$, is such that $x_{\alpha_n}\in C^-_{\alpha_n}$. Then, up to extracting a subsequence, we would have $x_{\alpha_n}\to x_0$. Since $U_{\alpha_n}\to U_\alpha$ uniformly, we must have $x_0=x_{\alpha_1}$. Therefore, using again the uniform convergence, we would get
\[
\lim_{n\to\infty} U_{\alpha_n}(x_n) = U_{\alpha_1}(x_{\alpha_1}).
\]
This is not possible, since
\[
U_{\alpha_n}(x_n) = F(\alpha_n)-\alpha_n > F(\alpha_n)+\alpha_n\to F(x_{\alpha_1})+\alpha_1 = U_{\alpha_1}+\alpha_1.
\]
Since $\alpha_1>0$, the strict inequality above does not saturate. This gives the desired contradiction and concludes the proof.
\end{proof}

Finally, we prove the monotonicity of the amplitude of the jump.

\begin{proposition}\label{prop:monot_ampl}
Let $\alpha_1 < \alpha_2$.
Then, it holds that
\[
|u_{\alpha_1}^l(x) - u_{\alpha_1}^r(x)| \geq  |u_{\alpha_2}^l(x) - u_{\alpha_2}^r(x)|,
\]
for all $x\in J_{u_{\alpha_2}}$.
\end{proposition}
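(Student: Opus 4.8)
The plan is to reduce the inequality to a pointwise comparison of the one–sided derivatives of the two taut–string solutions at $x$, and to extract that comparison from the pointwise monotonicity $U_{\alpha_1}-(\alpha_2-\alpha_1)\le U_{\alpha_2}$ of the taut strings themselves. First I would translate everything to the taut–string side: by Theorem~\ref{thm:gra07} and Remark~\ref{rmk:limU'}, for each $\alpha$ and each $y\in(0,1)$ the one–sided limits $U'^{,-}_\alpha(y),U'^{,+}_\alpha(y)$ exist, and since $\frac{U_\alpha(z)-U_\alpha(y)}{z-y}=\aint_y^z U'_\alpha$, they coincide with the left and right limits of the good representative of $u_\alpha=U'_\alpha$; hence $u_\alpha^l(x)=U'^{,-}_\alpha(x)$ and $u_\alpha^r(x)=U'^{,+}_\alpha(x)$ in the notation of \eqref{eq:r_l_limit}. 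By Proposition~\ref{prop:jump-inclusion}, $x\in J_{u_{\alpha_2}}\subset C_{\alpha_2}^+\cup C_{\alpha_2}^-$; replacing $F$ by $-F$ (which turns $U_{\alpha_i}$ into $-U_{\alpha_i}$, interchanges $C^+$ and $C^-$, and leaves the jump amplitudes unchanged) we may assume $x\in C_{\alpha_2}^-$.

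Next I would record the pointwise monotonicity
\[
U_{\alpha_1}(y)-(\alpha_2-\alpha_1)\ \le\ U_{\alpha_2}(y)\qquad\text{for all }y\in(0,1),
\]
which is precisely the inequality $\widetilde U_\alpha\le U_{\bar\alpha}$ obtained in \eqref{eq:UUG} in the proof of Theorem~\ref{thm:contact_set_inclusion}, upgraded from $\alpha$ near $\bar\alpha$ to arbitrary $\alpha_1<\alpha_2$ by the same continuation scheme used for \eqref{eq:maxbeta}. Evaluating this at $x$, and using $x\in C_{\alpha_2}^-$ together with the admissibility bound $F-\alpha_1\le U_{\alpha_1}\le F+\alpha_1$, forces $x\in C_{\alpha_1}^-$: indeed, by \eqref{eq:monCalphas} the only alternative is $x\in C_{\alpha_1}^+$, and then $U_{\alpha_1}(x)-(\alpha_2-\alpha_1)=F(x)+2\alpha_1-\alpha_2>F(x)-\alpha_2=U_{\alpha_2}(x)$, a contradiction since $\alpha_1>0$.

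Now set $g:=U_{\alpha_2}-U_{\alpha_1}+(\alpha_2-\alpha_1)\in AC(0,1)$. The displayed inequality gives $g\ge 0$ on $(0,1)$, while $x\in C_{\alpha_1}^-\cap C_{\alpha_2}^-$ gives $g(x)=(F(x)-\alpha_2)-(F(x)-\alpha_1)+(\alpha_2-\alpha_1)=0$, so $x$ is a minimum point of $g$. Its one–sided difference quotients therefore satisfy $\frac{g(y)-g(x)}{y-x}\ge 0$ for $y>x$ and $\le 0$ for $y<x$, and passing to the limits (which exist by Remark~\ref{rmk:limU'}, $g$ being a difference of functions with one–sided derivatives) yields $U'^{,+}_{\alpha_2}(x)\ge U'^{,+}_{\alpha_1}(x)$ and $U'^{,-}_{\alpha_2}(x)\le U'^{,-}_{\alpha_1}(x)$, hence
\[
U'^{,-}_{\alpha_1}(x)-U'^{,+}_{\alpha_1}(x)\ \ge\ U'^{,-}_{\alpha_2}(x)-U'^{,+}_{\alpha_2}(x).
\]
Finally, for $i=1,2$ we have $x\in C_{\alpha_i}^-\subset(0,1)\setminus C_{\alpha_i}^+$, so by Lemma~\ref{lem:geomtaut}(2) $U_{\alpha_i}$ is concave on a neighbourhood of $x$, whence $U'^{,-}_{\alpha_i}(x)\ge U'^{,+}_{\alpha_i}(x)$; thus each side of the last display equals $|u_{\alpha_i}^l(x)-u_{\alpha_i}^r(x)|$, and the conclusion follows.

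The main obstacle is the pointwise monotonicity $U_{\alpha_1}-(\alpha_2-\alpha_1)\le U_{\alpha_2}$: it does not follow formally from the contact–set inclusion \eqref{eq:monCalphas}, but only from the localized concave–envelope analysis of Step~2 of Theorem~\ref{thm:contact_set_inclusion} (inequality \eqref{eq:UUG}) together with the continuation argument; everything afterwards is elementary. If one prefers to avoid this input, the statement can instead be proved by the same competitor strategy as Proposition~\ref{prop:from-contact-to-jump}: were the amplitude inequality false, then, as in \eqref{eq:option1}--\eqref{eq:option2}, either $U'^{,+}_{\alpha_1}(x)>U'^{,+}_{\alpha_2}(x)$ or $U'^{,-}_{\alpha_1}(x)<U'^{,-}_{\alpha_2}(x)$, and in the first case the competitor obtained by splicing $U_{\alpha_1}-(\alpha_2-\alpha_1)$ into $U_{\alpha_2}$ on an interval $[x,x_0]$ (Cases~1--3 of that proof) contradicts the minimality of $U_{\alpha_2}$, the second case being reduced to the first by the reflection $y\mapsto U_\alpha(2x-y)$.
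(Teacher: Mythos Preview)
Your argument is correct and takes a genuinely different route from the paper. The paper proceeds by a case analysis: first splitting according to whether $x$ lies in $\interior(C_{\alpha_2}^+\cup C_{\alpha_2}^-)$ or $\partial(C_{\alpha_2}^+\cup C_{\alpha_2}^-)$, and in the latter case splitting again according to whether $x\in\interior C_{\alpha_1}^-$ or $x\in\partial C_{\alpha_1}^-$; in each case the slope inequalities are read off from the optimality conditions of Theorem~\ref{thm:optcond} (and, in the last sub-case, from the same Step~3 inequality you invoke, applied at a single nearby point $x-\delta$). Your proof unifies all cases via the single observation that $g=U_{\alpha_2}-U_{\alpha_1}+(\alpha_2-\alpha_1)\ge 0$ attains its minimum at $x$, which yields both one-sided slope comparisons at once; the concavity from Lemma~\ref{lem:geomtaut}(2) then removes the absolute values. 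This is more economical and makes the dependence on the pointwise monotonicity of the taut strings fully transparent.

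One small point of precision: the inequality \eqref{eq:UUG} in the paper is established only on those intervals $[m_{i-1},m_i]$ that intersect $C_{\bar\alpha}^-$ (on intervals intersecting $C_{\bar\alpha}^+$ the symmetry gives the reverse bound $U_{\bar\alpha}\le U_\alpha+(\bar\alpha-\alpha)$), so your sentence ``for all $y\in(0,1)$'' is a slight overclaim relative to what the paper proves. This does not affect your proof, since you only use $g\ge 0$ in a neighbourhood of $x\in C_{\alpha_2}^-$, and by the separate inclusion $C_{\alpha_2}^-\subset C_{\alpha_1}^-$ (shown at the end of Step~4 of Theorem~\ref{thm:contact_set_inclusion}) such a neighbourhood lies in the relevant intervals at every stage of the continuation. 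Alternatively, the global inequality $U_{\alpha_1}-(\alpha_2-\alpha_1)\le U_{\alpha_2}$ can be proved directly from universal minimality (Theorem~\ref{thm:uni-min}) by the lattice argument: both $W:=\max\big(U_{\alpha_1}-(\alpha_2-\alpha_1),U_{\alpha_2}\big)$ and $Z:=\min\big(U_{\alpha_1},U_{\alpha_2}+(\alpha_2-\alpha_1)\big)$ are admissible for the $\alpha_2$- and $\alpha_1$-problems respectively, and $\int(W')^2+\int(Z')^2=\int(U_{\alpha_1}')^2+\int(U_{\alpha_2}')^2$, forcing equality by strict convexity. Your closing remark that the competitor strategy of Proposition~\ref{prop:from-contact-to-jump} provides an alternative is also accurate, and is in fact closer in spirit to what the paper does.
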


\begin{proof}
Note that $x\in J_f$ if and only if $x$ is not a point of differentiability for $F$.
In particular, thanks to our assumption, if $F^{',-}(x)\neq F^{',+}(x)$, namely if the left and the right slopes of $F$ differ.

Fix $\alpha_1<\alpha_2$, and let $x\in J_{u_{\alpha_2}}$.
By Proposition \ref{prop:jump-inclusion}, we have two possibilities:
\[
x \in  J_f \cap \interior (C_{\alpha_2}^+ \cup C_{\alpha_2}^-),
\quad\quad\quad \text{or}\quad\quad\quad 
x\in J_f \cap \partial (C_{\alpha_2}^+ \cup C_{\alpha_2}^-).
\]
In the first case, assume without loss of generality that $x\in J_f\cap \interior C_{\alpha_2}^+$.
Then, there exists $\delta>0$ such that $(x-\delta,x+\delta)\subset \interior C_{\alpha_2}^+$. This implies that
\[
U_{\alpha_2}^{',-}(x) = F^{',-}(x),\quad\quad\quad
U_{\alpha_2}^{',+}(x) = F^{',+}(x).
\]
The monotonicity of the contact set with respect of $\alpha$ (see Theorem \ref{thm:contact_set_inclusion}) implies that $x \in   J_{f} \cap \interior C_{\alpha_1}^+$.
Therefore,
\[
U_{\alpha_2}^{',-}(x) = U_{\alpha_1}^{',-}(x),\quad\quad\quad
U_{\alpha_2}^{',+}(x) = U_{\alpha_1}^{',+}(x),
\]
and thanks to Theorem \ref{thm:gra07}, we get that $u^l_{\alpha_2}(x) = u^l_{\alpha_1}(x)$ and $u^r_{\alpha_2}(x) = u^r_{\alpha_1}(x)$.

In the second case, assume without loss of generality that $x\in J_f \cap \partial C_{\alpha_2}^-$.
We consider two sub-cases:
\[
x\in J_f \cap \interior C_{\alpha_1}^-\quad\quad\quad \text{or}
\quad\quad\quad x\in J_f \cap \partial C_{\alpha_1}^-.
\]
In both of these sub-cases, we assume without loss of generality that there exists $\delta>0$ such that $(x-\delta,x)\subset (0,1)\setminus (C^+_{\alpha_2}\cup C^-_{\alpha_2})$.

In the first sub-case, note that
\[
U_{\alpha_1}^{',+}(x) = f^+(x),\quad\quad\quad
f^-(x) = U_{\alpha_1}^{',-}(x)
\]
Then, (DL) of Theorem \ref{thm:optcond} yields that
\[
U_{\alpha_1}^{',+}(x) = f^+(x)
\leq U_{\alpha_2}^{',+}(x) \leq U_{\alpha_2}^{',-}(x)
\leq f^-(x) = U_{\alpha_1}^{',-}(x).
\]
Using Theorem \ref{thm:gra07}, we obtain the desired inequality.

In the second sub-case, we claim that
\[
U_{\alpha_2}^{',-}(x) \leq  U_{\alpha_1}^{',-}(x),
\]
and that
\[
U_{\alpha_1}^{',+}(x) \leq U_{\alpha_2}^{',+}(x).
\]
Let us start by proving the first claim.
Thanks to Theorem \ref{thm:contact_set_inclusion}, up to reducing $\delta>0$, we can assume that $(x-\delta,x)\subset (0,1)\setminus (C^+_{\alpha_1}\cup C^-_{\alpha_1})$.
Recall that in Step 3 of the proof of Theorem \ref{thm:contact_set_inclusion}, we proved that
\[
U_{\alpha_1}(x-\delta) - (\alpha_2-\alpha_1) \leq U_{\alpha_2}(x-\delta).
\]
Therefore, since $U_{\alpha_1}$ and $U_{\alpha_2}$ are affine in $(x-\delta,x)$, and $U_{\alpha_1}(x)= U_{\alpha_2}(x)$, we obtain the desired result.

To prove the second claim, we notice that if we are able to reduce $\delta$ so that $(x,x+\delta)\subset C^-_{\alpha_1}$, then we conclude by using (DL) of Theorem \ref{thm:optcond}.
Otherwise, we argue as in the proof of the previous claim.
\end{proof}

\section*{Acknowledgements}
The research of Riccardo Cristoferi was partially supported under NWO-OCENW.M.21.336.
Rita Ferreira was partially supported by King Abdullah University of Science and Technology (KAUST) baseline funds and KAUST
OSR-CRG2021-4674.
The research of Irene Fonseca was partially supported by the National Science Foundation under grants DMS-2108784, DMS-2205627, and DMS-2342349.
Part of this work was carried out while Riccardo Cristoferi and Jos\'e A. Iglesias attended the Workshop Calculus of Variations NL 2024 in Schiermonnikoog, for which those authors are grateful.
All authors also thank the hospitality of the Erwin Schr\"odinger International Institute for Mathematics and Physics (ESI) in Vienna, where part of this work was carried out.

\bibliographystyle{plain}

\IfFileExists{"CrFeFoIg-bib.bib"}
{\bibliography{CrFeFoIg-bib}}

\end{document}